\def\index#1{}
\theoremstyle{plain}
\newtheorem{thm}{Theorem}[section]
\newtheorem{proposition}{Proposition}[section]
\newtheorem{lem}[thm]{Lemma}
\theoremstyle{definition}
\newtheorem{defi}[thm]{Definition}
\newtheorem{rk}[thm]{Remark}
\newcommand{\rrVert}{\Vert}
\newcommand{\llVert}{\Vert}
\newcommand{\rrvert}{\vert}
\newcommand{\llvert}{\vert}
\begin{document}

\begin{frontmatter}
\pretitle{Research Article}

\title{Moderate deviations for a stochastic Burgers equation}

\author[a]{\inits{R.}\fnms{Rachid}~\snm{Belfadli}\thanksref{cor1}\ead[label=e1]{r.belfadli@uca.ma}}
\author[b,c]{\inits{L.}\fnms{Lahcen}~\snm{Boulanba}\ead[label=e2]{l.boulanba@crmefsm.ac.ma}}
\author[d]{\inits{M.}\fnms{Mohamed}~\snm{Mellouk}\ead[label=e3]{mohamed.mellouk@parisdescartes.fr}}
\thankstext[type=corresp,id=cor1]{Corresponding author.}
\address[a]{D\'{e}partement de Math\'{e}matiques,\break
Laboratoire de M\'{e}thodes Stochastiques Appliqu\'{e}es\break \`{a} la Finance et l'Actuariat (LaMSAFA),\break
Facult\'{e} des Sciences et~Techniques Gu\'{e}liz,\break
\institution{Universit\'{e} Cadi Ayyad},\break
B.P. 549, 40 000 Marrakech, \cny{Maroc}}
\address[b]{D\'{e}partement de Math\'{e}matiques,\break
\institution{Centre R\'{e}gional des M\'{e}tiers\break  de l'\'{E}ducation et~de~la~Formation-Souss Massa},\break
B.P. 106, 86 350 Inezgane, \cny{Maroc}}
\address[c]{Laboratoire LISTI, ENSA,
\institution{Universit\'{e} Ibn Zohr},\break
B.P. 1136, 80000 Agadir, \cny{Maroc}}
\address[d]{MAP5, CNRS UMR 8145,
\institution{Universit\'{e} Paris Descartes},\break
45, rue des Saints-P\`{e}res,\break
75270 Paris Cedex 6, \cny{France}}

%\thankstext[id=f1]{}

%\dedicated{}

%\markboth{Authors}{Title}
\markboth{R. Belfadli et al.}{Moderate deviations for a stochastic Burgers equation}

\begin{abstract}
%In this paper, we prove a moderate deviations principle for the law of
%a stochastic Burgers equation via the weak convergence approach.
A moderate deviations\index{moderate deviations}  principle for the law of a
stochastic Burgers equation is proved via the weak
convergence approach.
In addition, some useful estimates
toward a central limit theorem
are %also
established.
\end{abstract}
\begin{keywords}
\kwd{Stochastic Burgers equation}
\kwd{space-time white noise}
\kwd{stochastic partial differential equations}
\kwd{moderate deviations \index{moderate deviations}  principle}
\kwd{weak convergence method}
\end{keywords}
\begin{keywords}[MSC2010]%
\kwd{60F10}
\kwd{60F05}
\kwd{60H15}
\end{keywords}

\received{\sday{29} \smonth{10} \syear{2018}}% Updated by VTEXPTS2LaTeX.exe, 02.05.2019 11:19
\revised{\sday{29} \smonth{3} \syear{2019}}% Updated by VTEXPTS2LaTeX.exe, 02.05.2019 11:19
\accepted{\sday{17} \smonth{4} \syear{2019}}% Updated by VTEXPTS2LaTeX.exe, 02.05.2019 11:19
\publishedonline{\sday{16} \smonth{5} \syear{2019}}
\end{frontmatter}

%s1 #&#
\section{Introduction}\label{sec1}

We consider the following stochastic Burgers equation\index{stochastic Burgers equation} with
multiplicative space-time white noise, indexed by $\varepsilon >0$,
given by
%
%e1 #&#
\begin{align}
\label{1} \dfrac{\partial u^{\varepsilon }}{\partial t}(t,x)&= \Delta u^{\varepsilon
}(t,x)+
\dfrac{1}{2}\dfrac{\partial }{\partial x} \bigl(u^{\varepsilon
}(t,x)
\bigr)^{2}
\nonumber
\\
&\quad +\sqrt{\varepsilon }\sigma \bigl(u^{\varepsilon }(t,x)\bigr) {
\dot{W}}(t,x), \quad (t,x)\in [0,T]\times [0, 1],
\end{align}
with Dirichlet\index{Dirichlet boundary conditions}'s boundary conditions $u^{\varepsilon }(t,0)=u^{\varepsilon
}(t,1)=0$ for $t\in [0,T]$, and the initial condition $u^{\varepsilon
}(0,x)=u_{0}(x)$ for $x\in [0,1]$. We assume that $u_{0}$ is continuous
on $[0 , 1]$ and $\sigma $ is bounded and globally Lipschitz\index{globally Lipschitz} on
$\mathbb{R}$. The driving noise ${W}$ is a space-time Brownian sheet\index{Brownian sheet}
defined on some filtered probability space
$(\varOmega, {\mathcal{F}},\allowbreak ({\mathcal{F}}_{t})_{t\in [0,T]}, \mathbb{P})$.\looseness=1

The deterministic Burgers equation was introduced in
\cite{burgers1974nonlinear} as a simplified mathematical model
describing the turbulence phenomena in fluids. Its stochastic version
has been the subject of several works; see for instance
\cite{bertini1994stochastic,gyongy1998existence,morien1999density}, and the references therein. In particular, a
large deviation principle is established in
\cite{setayeshgar2014large} for an ``{\textit{additive version}}'' of
(\ref{1}), and in \cite{cardon1999large} and
\cite{foondun2017large} for a class of Burgers' type stochastic partial
differential equations (SPDEs\index{SPDEs} for short) including (\ref{1}). Generally
speaking, large deviations theory deals with determining how fast the
probabilities\index{probabilities} $\mathbb{P}(A_{\varepsilon })$ of a family of rare events
$(A_{\varepsilon })$ decay to 0 as $\varepsilon $ tends to 0, and how
to compute the precise rate of decay as a function of the rare events.
A related natural important
%related
question is to study moderate deviations \index{moderate deviations}
results which deals with probabilities\index{probabilities} of deviations of
``{\textit{smaller order}}'' than in large deviations. We \xch{will}{wil} precise
below the main difference between moderate and large deviations
principles in the context of stochastic Burgers equation,\index{stochastic Burgers equation} and for a
deeper description and detail about these two kinds of deviations
principles and their relationship, we refer the reader to
\cite{budhiraja2016moderate}.

Our first goal in this paper is to study the moderate deviations \index{moderate deviations} of
${u}^{\varepsilon }$ from the deterministic solution ${u}^{0}$ of the
equation (\ref{2}) below. More precisely, we deal with the deviations
of the trajectory
%
%e2 #&#
\begin{equation}
\label{16} \bar{u}^{\varepsilon }(t,x):= \frac{{u}^{\varepsilon }(t,x) -
{u}^{0}(t,x)}{a(
\varepsilon )},
\end{equation}
where the deviation scale $a: \mathbb{R}^{+} \longrightarrow
\mathbb{R}^{+}$\index{deviation scale} is such that
%
%e3 #&#
\begin{equation}
\label{15} a(\varepsilon ) \longrightarrow 0 \quad \text{and} \quad h(
\varepsilon ): = \frac{a(\varepsilon )}{\sqrt{\varepsilon }} \longrightarrow \infty , \quad \text{as} \
\varepsilon \longrightarrow 0,
\end{equation}
and $u^{0}$ stands for the solution of the following deterministic
partial differential equation
%
%e4 #&#
\begin{equation}
\label{2} \dfrac{\partial u^{0}}{\partial t}(t,x)= \dfrac{\partial ^{2} u^{0}}{
\partial x^{2}}(t,x)+
\dfrac{1}{2}\dfrac{\partial }{\partial x} \bigl(u ^{0}(t,x)
\bigr)^{2}, \quad (t,x)\in [0,T]\times [0, 1],
\end{equation}
with Dirichlet\index{Dirichlet boundary conditions}'s boundary conditions $u^{0}(t,0)=u^{0}(t,1)=0$ for
$t\in [0,T]$, and the initial condition $u^{0}(0,x)=u_{0}(x)$.

The deviation scale $a(\varepsilon )$\index{deviation scale} influences strongly the asymptotic
behavior of $\bar{u}^{\varepsilon }$. In fact, for certain norm
$\|\cdot \|$, bounds of the probabilities $\mathbb{P}  (\frac{\|u
^{\varepsilon } - u^{0}\|}{\sqrt{\varepsilon }} \in \cdot  )$\index{probabilities}
are dealt with the central limit\index{central limit theorem} theorem, while probabilities
$\mathbb{P}  (\|u^{\varepsilon } - u^{0}\| \in \cdot   )$\index{probabilities} are
estimated by large deviations results. Furthermore, when we are
interested in probabilities\index{probabilities} of the form $\mathbb{P}  (\frac{\|u
^{\varepsilon } - u^{0}\|}{a(\varepsilon )} \in \cdot   )$ under
the condition (\ref{15}) (e.g. $a(\varepsilon ) = \varepsilon ^{1/4}$),
then we are in the framework of the so-called moderate deviations \index{moderate deviations}  which
fills %in
the gap between the central limit theorem\index{central limit theorem} scale ($a(\varepsilon
) = \sqrt{\varepsilon }$) and the large deviations scale ($a(\varepsilon
) = 1$). In this paper, we will establish the moderate deviations \index{moderate deviations}
principle for (\ref{1}). For the study of this topic for various kind
of stochastic processes, see for instance e.g.
\cite{chen2017well,de1992moderate,gao1996moderate,liming1995moderate}.

Furthermore, there are basically two approaches to analyzing moderate
and large deviations for processes. The former, which is originally used
by Freidlin and Wentzell \cite{freidlinrandom} for diffusions
processes, relies on discretization and localization arguments that
allow to deduce the large deviations principle, for the solutions of
equations under study, using a general contraction principle from some
Schilder type theorems for the driving noises. The second one, which we
are going to use in present paper, is the so-called  weak convergence\index{weak convergence}
approach. It was introduced in \cite{dupuis2011weak} and developed
in \cite{boue1998variational,budhiraja2000variational} and \cite{budhiraja2008large}, and
its starting point is the equivalence between large deviations principle
and Laplace principle\index{Laplace principle} in the setting of Polish spaces. It consists in
using certain variational formulas that can be viewed as the minimal
cost functions\index{minimal cost functions} for associated stochastic optimal problems. These minimal
cost functions\index{minimal cost functions} have a form to which the theory of weak convergence\index{weak convergence} of
probability measures can be applied. We refer to
\cite{dupuis2011weak} for a more complete exposition on this approach.

%Let us stress here on the fact
We stress here that, in the present paper, we mainly use
the weak convergence\index{weak convergence} approach to establish \textit{moderate deviations}\index{moderate deviations}
for stochastic Burgers' equations\index{stochastic Burgers equation} while in the previous works
(\cite{cardon1999large,setayeshgar2014large,foondun2017large}) the authors studied the large deviations
principle for this equation. The most likely advantage in using the weak
convergence\index{weak convergence} approach is that it allows one to avoid establishing
technical exponential-type probability estimates usually needed in the
classical studies of large deviations principle, and reduces the proofs
to demonstrating qualitative properties like existence, uniqueness and
tightness\index{tightness} of certain analogues of the original processes.

We also note that the %most
greatest difficulty in studying any aspect of
Burgers' type equations lies in their quadratic term. In fact, most of
the techniques usually used to deal with stochastic differential
equations with Lipschitz drift coefficients don't longer work generally,
and one resort to localization or tightness argument to circumvent this
difficulty.

As pointed out before, we will prove a moderate deviations \index{moderate deviations}  principle for
the stochastic Burgers equation\index{stochastic Burgers equation} (\ref{1}), and two first-step results
toward a central limit\index{central limit theorem} theorem. It is worth bearing in mind that the
most difficulty we have encountered in establishing a central limit
theorem\index{central limit theorem} is mainly due to the quadratic term appearing in the Burgers
equation for which the classical conditions (namely, the Lipschitz
condition on the drift coefficient, the boundedness\index{boundedness} and the
differentiability of its derivative) are no longer satisfied.

The paper is organized as follows. Section \ref{sec2} is devoted to some
preliminaries. The framework of our moderate deviations \index{moderate deviations}  result and its
proof are given in Section \ref{sec3}. In Section \ref{sec4}, %and
toward a central
limit theorem\index{central limit theorem} for the stochastic Burgers equation,\index{stochastic Burgers equation} we prove the uniform
boundedness and the convergence\index{convergence} of $u^{\varepsilon }$ to $u^{0}$ in the space
% words added to not break the formula
$\mathrm{L}^{q}(\varOmega ; C([0,T];{\allowbreak} \mathrm{L}^{2}([0,1])))$ for
$q\geqslant 2$. Furthermore, some technical results needed in our proofs
are included in the Appendix.

In this paper all positive constants are denoted by $c$, and their
values may change from line to line. Also, for $\rho \geqslant 1$ and
$t\in [0,T]$, the usual norms on $L^{\rho }([0,1])$ and $\mathcal{H}
_{t}: = L^{2}([0,t]\times [0,1])$ are respectively denoted by
$\|\cdot \|_{\rho }$ and $\|\cdot \|_{\mathcal{H}_{t}}$.

%s2 #&#
\section{Preliminaries}\label{sec2}

Let $  \{W(t,x), t\in [0,T],  x\in [0,1]   \}$ be a
space-time Brownian sheet\index{Brownian sheet} on a filtered probability space $(\varOmega ,
{\mathcal{F}}, {\mathcal{F}}_{t},\mathbb{P})$, that is, a zero-mean
Gaussian field with covariance function given by
\begin{equation*}
\mathbf{E} \bigl(W(t,x)W(s,y) \bigr) = (t\wedge s) (x\wedge y), \quad s, t \in
[0,T],\ x, y \in [0,1].
\end{equation*}

For each $t\in [0,T]$, ${\mathcal{F}}_{t}$ is the completion of the
$\sigma $-field generated by the family of random variables %
$\{W(s,x), 0\leqslant s \leqslant t, x\in [0,1]\}$.

A rigorous meaning to the solution of (\ref{1}) is given by a jointly
measurable and ${\mathcal{F}}_{t}$-adapted process $u^{\varepsilon }:
=\{u^{\varepsilon }(t,x);(t,x)\in [0, T]\times [0, 1]\}$ satisfying, for
almost all $\omega \in \varOmega $ and all $t\in [0,T]$ the following
evolution equation:
%
%e5 #&#
%e5 #&#
\begin{eqnarray}
\label{21} u^{\varepsilon }(t,x) \!\!\!\!&=&\!\!\!\!\int
_{0}^{1}G_{t}(x,y)u_{0}(y)dy
\,-\, \int_{0}^{t}\int_{0}^{1}
\partial _{y} G_{t-s}(x,y) \bigl(u^{\varepsilon }(s,y)
\bigr)^{2}dyds
\nonumber
\\
&&\!\!\!\!{} +\sqrt{\varepsilon }\int_{0}^{t}
\int_{0}^{1}G_{t-s}(x,y) \sigma
\bigl(u^{\varepsilon }(s,y)\bigr)W(ds,dy), \label{3}
\end{eqnarray}
for $dx$-almost all $x\in [0,T]$, where $G_{t}(\cdot ,\cdot )$ denotes
the Green kernel corresponding to the operator $\frac{\partial }{
\partial t}-\Delta $ with the Dirichlet boundary conditions.\index{Dirichlet boundary conditions} The
stochastic integral in (\ref{21}) is understood in the Walsh sense, see
\cite{walsh1986introduction}.

By Theorem 2.1 in \cite{gyongy1998existence}, there exists a unique
$L^{2}[0,1]$-valued continuous stochastic process $\{u^{\varepsilon
}(t,.), t\in [0,T]\}$ satisfying the equation (\ref{3}).

The deterministic equation (\ref{2}) obtained when the parameter
$\varepsilon $ tends to zero can be written in the following integral
form
%
%e6 #&#
\begin{align}
\label{4} u^{0}(t,x) =\int_{0}^{1}G_{t}(x,y)u_{0}(y)dy
\,-\,\int_{0}^{t}\int_{0}^{1}
\partial _{y} G_{t-s}(x,y) \bigl(u^{0}(s,y)
\bigr)^{2}dyds.
\end{align}
Since (\ref{4}) corresponds to $\sigma \equiv 0$ in the degenerate case
studied in \cite{gyongy1998existence}, it admits a unique solution
$u^{0}$ belonging to $C([0,T]; \mathrm{L}^{2}([0,1]))$. Moreover, the
continuity\index{continuity} of $u^{0}$ on the compact set $[0,T]$ implies that
%
%e7 #&#
\begin{equation}
\label{6bis} \sup_{t\in [0, T]} \bigl\llVert u^{0}(t,
\cdot ) \bigr\rrVert _{2}^{q} < \infty ,
\end{equation}
for all $q\geqslant 2$.

We now recall some estimations of the Green kernel function $G$, as
stated in \cite{gyongy1998existence} and
\cite{morien1999density}, that will be used in the sequel.

%l2.1 #&#
\begin{lem}
\label{20}Let $G$ denotes the Green kernel corresponding to the operator $\frac{\partial }{\partial t}-\Delta$ with the Dirichlet boundary conditions. Then, we have
\begin{itemize}%
\item[i)] for any $t\in [0, T]$ and $y\in [0, 1]$:\quad $\displaystyle\int_{0}^{1}G_{t}(x,y)dx = 1 $;
\item[ii)] for any $t\in [0, T]$ and $\dfrac{1}{2} < \beta < \dfrac{3}{2}$:\quad  $\displaystyle
\int_{0}^{t}\int_{0}^{1}  \llvert \partial _{x} G_{t-s}(x,y) \rrvert ^{
\beta }dxds \leqslant c_{\beta , T}$,\vspace*{3pt}\quad where $c_{\beta , T}$ is a
constant depending only on $T$ and $\beta $.
\end{itemize}
Moreover, there exists a constant $c$, depending only on $T$, such that for all
$y, z \in [0, 1]$ and $ t , t' \in [0,T]$ such that $0\leqslant t
\leqslant t' \leqslant 1$,
\begin{itemize}
\item[iii)]$
\displaystyle
\int_{t}^{t'}\int_{0}^{1}G_{t'-s}^{2}(x,y)dxds \leqslant c
\sqrt{t'-t}$\quad  and\quad  $
\displaystyle
\int_{0}^{t}\int_{0}^{1}G_{t-s}^{2}(x,y)dxds \leqslant c$;
\item[iv)]$
\displaystyle
\int_{0}^{ t'}\int_{0}^{1}[G_{t-s}(x,y) - G_{t'-s}(x,y)]^{2}dxds
\leqslant c \sqrt{t'-t}$;
\item[v)] $
\displaystyle
\int_{0}^{t}\int_{0}^{1}[G_{s}(x,y) - G_{s}(x,z)]^{2}dxds \leqslant c
|y-z|$.
\end{itemize}
\end{lem}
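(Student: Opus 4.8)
The plan is to work from two equivalent explicit descriptions of the Dirichlet heat kernel on $[0,1]$. The first is the spectral (eigenfunction) expansion
\[
G_{t}(x,y)=\sum_{n\geqslant 1}e^{-\lambda _{n}t}\,e_{n}(x)\,e_{n}(y),\qquad e_{n}(x)=\sqrt{2}\sin (n\pi x),\quad \lambda _{n}=\pi ^{2}n^{2},
\]
and the second is the method-of-images representation in terms of the free Gaussian kernel $p_{t}(z)=(4\pi t)^{-1/2}\exp (-z^{2}/(4t))$,
\[
G_{t}(x,y)=\sum_{m\in \mathbb{Z}}\bigl[p_{t}(x-y-2m)-p_{t}(x+y-2m)\bigr].
\]
The spectral form, being an orthonormal expansion, is tailored to the $\mathrm{L}^{2}$-type estimates iii)--v) through Parseval's identity in the $x$ variable, while the image form produces the pointwise Gaussian gradient bounds required for the $\mathrm{L}^{\beta }$ estimate ii). The uniform bound $e_{n}(y)^{2}\leqslant 2$ and the summability $\sum _{n}\lambda _{n}^{-1}<\infty $ will be used repeatedly.

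For the normalization i), I expand $\int _{0}^{1}G_{t}(x,y)\,dx$ by the spectral formula; since $\int _{0}^{1}e_{n}(x)\,dx=\sqrt{2}\,(1-(-1)^{n})/(n\pi )$, this identifies the integral with the Fourier sine expansion of the constant function $1$, which yields i). For ii) I pass to the image representation and use the elementary Gaussian bound $|\partial _{x}p_{r}(z)|\leqslant c\,r^{-1}\exp (-z^{2}/(cr))$. Integrating in $x$ over the whole line, the $\mathrm{L}^{\beta }$ norm of the principal term is $\int _{\mathbb{R}}|\partial _{x}p_{r}(x-y)|^{\beta }\,dx\leqslant c\,r^{-\beta }\int _{\mathbb{R}}e^{-\beta z^{2}/(cr)}\,dz=c\,r^{1/2-\beta }$, and the boundary/image terms contribute remainders of the same or smaller order (their Gaussian centres are pushed outside or to the edge of $[0,1]$), so that $\int _{0}^{1}|\partial _{x}G_{r}(x,y)|^{\beta }\,dx\leqslant c\,r^{1/2-\beta }$. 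The time integral $\int _{0}^{t}r^{1/2-\beta }\,dr$ is then finite precisely when $\beta <3/2$, which is the binding constraint; the range $1/2<\beta <3/2$ is the one relevant later.

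The estimates iii)--v) all reduce, after Parseval in $x$, to bounding series $\sum _{n}a_{n}\,e_{n}(y)^{2}$ with $a_{n}\geqslant 0$, and the common device is a dyadic split of the sum at the index $N$ where the relevant factor crosses over between its two regimes. For iii), Parseval gives $\int _{0}^{1}G_{r}(x,y)^{2}\,dx=\sum _{n}e^{-2\lambda _{n}r}e_{n}(y)^{2}$, hence $\int _{0}^{\tau }\!\int _{0}^{1}G_{r}^{2}\,dx\,dr\leqslant \sum _{n}\lambda _{n}^{-1}(1-e^{-2\lambda _{n}\tau })$; splitting at $N\sim \tau ^{-1/2}$ and using $1-e^{-u}\leqslant \min (u,1)$ bounds this by $c\sqrt{\tau }$, giving the first inequality (with $\tau =t'-t$), while $\sum _{n}\lambda _{n}^{-1}<\infty $ gives the second. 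For iv), with $\delta =t'-t$ and the convention $G_{\tau }\equiv 0$ for $\tau <0$, I split the $s$-integral at $t$: the part over $[t,t']$ equals $\int _{0}^{\delta }\!\int _{0}^{1}G_{r}^{2}\,dx\,dr\leqslant c\sqrt{\delta }$ by iii), and Parseval turns the part over $[0,t]$ into $\sum _{n}e_{n}(y)^{2}(1-e^{-\lambda _{n}\delta })^{2}/(2\lambda _{n})$, which the same split at $N\sim \delta ^{-1/2}$, via $(1-e^{-\lambda _{n}\delta })^{2}\leqslant \min ((\lambda _{n}\delta )^{2},1)$, bounds by $c\sqrt{\delta }$. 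For v), Parseval yields $\sum _{n}\lambda _{n}^{-1}(1-e^{-2\lambda _{n}t})[e_{n}(y)-e_{n}(z)]^{2}/2$, and since $[e_{n}(y)-e_{n}(z)]^{2}\leqslant 2\min (\lambda _{n}|y-z|^{2},4)$, a split at $N\sim |y-z|^{-1}$ gives $c\,|y-z|$.

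The main obstacle is the borderline nature of these series: the crude global bounds (for instance $1-e^{-u}\leqslant u^{1/2}$, or $[\,\cdot \,]^{2}\leqslant 1$) only produce the divergent harmonic series $\sum _{n}\lambda _{n}^{-1/2}=\pi ^{-1}\sum _{n}n^{-1}$, so the fractional scalings $\sqrt{t'-t}$ in iii)--iv) and $|y-z|$ in v) are recovered solely through the two-regime split that balances the small-$n$ (polynomial) against the large-$n$ (tail) contribution at the correct cutoff $N$. A secondary technical point, specific to ii), is to check that the reflection terms of the Dirichlet kernel only add bounded, integrable remainders and do not spoil the Gaussian estimates near the boundary; this is the one place where the Dirichlet conditions, as opposed to the free kernel on $\mathbb{R}$, must be handled with some care.
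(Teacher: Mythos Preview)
The paper does not give its own proof of this lemma; it simply recalls the estimates from Gy\"ongy \cite{gyongy1998existence} and Morien \cite{morien1999density} without argument. Your outline therefore supplies something the paper omits, and for parts ii)--v) the strategy you describe---the method-of-images Gaussian bound for the gradient in ii), and Parseval in $x$ followed by a dyadic split of the eigenvalue sum at the critical index for iii)--v)---is the standard route and is correct in substance.

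There is, however, a genuine slip in your argument for i). After integrating the spectral expansion in $x$ you obtain
\[
\int_{0}^{1}G_{t}(x,y)\,dx=\sum_{n\geqslant 1}e^{-\lambda_{n}t}\,e_{n}(y)\int_{0}^{1}e_{n}(x)\,dx,
\]
and you identify this with the sine series of the constant function $1$. But that identification is only valid at $t=0$: for $t>0$ the factors $e^{-\lambda_{n}t}<1$ strictly damp every nonzero coefficient, so the sum is strictly less than $1$ for any interior $y$. Probabilistically this is just the statement that the Dirichlet heat semigroup is sub-Markov (mass is absorbed at the boundary). The correct assertion---and the only one actually needed in the paper's later estimates---is the inequality $\int_{0}^{1}G_{t}(x,y)\,dx\leqslant 1$; the equality in the stated lemma appears to be a misprint carried over from the cited sources. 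Either way, your argument as written does not establish i), and no argument can: fix the claim to an inequality and then your observation (sine series of $1$, now with coefficients multiplied by factors in $(0,1]$) does the job.
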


%s3 #&#
\section{Moderate deviations \index{moderate deviations} }\label{sec3}

%s3.1 #&#
\subsection{Framework and the main result}

According to Varadhan \cite{varadhan1966asymptotic} and
\cite{bryc1992large}, a crucial step toward the large deviations principle
is the Laplace principle.\index{Laplace principle} Therefore, we will focus later on establishing
%such
this principle which we formulate in the following

%d3.1 #&#
\begin{defi}[Laplace principle] A family of random variables $\{X^{\varepsilon };
\, \varepsilon >0\}$ defined on a Polish space $\mathcal{E}$, is said
to satisfy the Laplace principle\index{Laplace principle} with speed $\lambda ^{2}(\varepsilon
)$ and rate function $I:{\mathcal{E}}\longrightarrow [0, \infty ]$ if
for any bounded continuous function $F: \mathcal{E} \rightarrow
\mathbb{R}$, we have
\begin{equation*}
\lim_{\varepsilon \rightarrow 0}\lambda ^{2}(\varepsilon )\log
\mathbf{E} \biggl( \exp \biggl[-\frac{1}{\lambda ^{2}(\varepsilon )}F\bigl(X ^{\varepsilon }
\bigr) \biggr] \biggr)= -\inf_{f\in {\mathcal{E}}}\bigl\{F(f)+I(f) \bigr\},
\end{equation*}
where $\mathbf{E}$ is the expectation with respect to $P$.
\end{defi}
In the context of the weak convergence\index{weak convergence} approach, %proving
the proof of the Laplace
principle\index{Laplace principle} for functionals of the Brownian sheet\index{Brownian sheet} is essentially based on
the following variational representation formula, which was originally
proved in \cite{budhiraja2000variational}.

%t3.2 #&#
\begin{thm}
Let $f\in \mathcal{C}([0,T]\times [0,1]; \mathbb{R})\longrightarrow
\mathbb{R}$ be a bounded measurable mapping $\mathcal{C}([0,T]\times
[0,1]; \mathbb{R})$ into $\mathbb{R}$, and let $\mathcal{P}_{2}$ be the
class of all predictable processes $u$ such that $\|u\|_{\mathcal{H}
_{T}} < \infty , a.s$. Then
%
%e8 #&#
\begin{equation}
-\log \mathbf{E}\exp \bigl\{-f(B)\bigr\} = \inf_{u\in \mathcal{P}_{2}}
\biggl(\frac{1}{2} \llVert u \rrVert _{\mathcal{H}_{T}}^{2} +
f \bigl(B^{u} \bigr) \biggr),
\end{equation}
where
\begin{equation*}
B^{u}(t,x):= B(t,x) + \int_{0}^{t}\int_{0}^{x}u(s,y)dyds,\quad \mbox{for any}\ (t,x)\in [0,T]\times [0,1].
\end{equation*}
\end{thm}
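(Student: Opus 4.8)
The plan is to prove the two inequalities separately, the whole argument resting on Girsanov's theorem for the space-time Brownian sheet together with the Gibbs (Donsker--Varadhan) variational principle
\[
-\log \mathbf{E}_{\mathbb{P}}\bigl[e^{-g}\bigr] = \inf_{\mathbb{Q}\ll \mathbb{P}}\Bigl(\mathbf{E}_{\mathbb{Q}}[g] + H(\mathbb{Q}\,|\,\mathbb{P})\Bigr), \qquad H(\mathbb{Q}\,|\,\mathbb{P}):=\mathbf{E}_{\mathbb{Q}}\Bigl[\log \tfrac{d\mathbb{Q}}{d\mathbb{P}}\Bigr],
\]
which holds for every bounded measurable $g$, with the infimum attained at $d\mathbb{Q}^{*}/d\mathbb{P}=e^{-g}/\mathbf{E}_{\mathbb{P}}[e^{-g}]$. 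I apply it with $g=f(B)$; since $f$ is bounded, $\mathbf{E}[e^{-f(B)}]\in(0,\infty)$ and all quantities below are finite. (As usual, the cost on the right-hand side of the theorem is understood in $\mathbf{E}$.) The role of a control $u\in\mathcal{P}_2$ is precisely to parametrize those $\mathbb{Q}$ obtained by shifting the sheet by the absolutely continuous drift $\int_0^t\int_0^x u(s,y)\,dy\,ds$.

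For the inequality ``$\le$'', fix $u\in\mathcal{P}_2$; after a routine truncation one may assume $u$ bounded, so that $\mathcal{E}(u):=\exp\bigl(\int_0^T\!\int_0^1 u\,dW-\tfrac12\|u\|_{\mathcal{H}_T}^2\bigr)$ is a genuine probability density, and I set $d\mathbb{Q}=\mathcal{E}(u)\,d\mathbb{P}$. By Girsanov's theorem for the Brownian sheet, under $\mathbb{Q}$ the field $W(ds,dy)-u(s,y)\,dy\,ds$ is again a Brownian sheet, so $B$ under $\mathbb{Q}$ has the law of $B^u$ under $\mathbb{P}$, whence $\mathbf{E}_{\mathbb{Q}}[f(B)]=\mathbf{E}_{\mathbb{P}}[f(B^u)]$. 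A direct computation with $\log\mathcal{E}(u)$ gives $H(\mathbb{Q}\,|\,\mathbb{P})=\tfrac12\mathbf{E}_{\mathbb{Q}}[\|u\|_{\mathcal{H}_T}^2]$, the stochastic-integral term having zero mean under $\mathbb{Q}$. Substituting $\mathbb{Q}$ into the variational principle yields
\[
-\log \mathbf{E}_{\mathbb{P}}\bigl[e^{-f(B)}\bigr]\;\le\; \mathbf{E}_{\mathbb{Q}}[f(B)]+H(\mathbb{Q}\,|\,\mathbb{P}) \;=\; \mathbf{E}_{\mathbb{Q}}\Bigl[\tfrac12\|u\|_{\mathcal{H}_T}^2 + f(B^u)\Bigr],
\]
and, after identifying this cost under the shifted measure with the one in the statement (the passage between the weak and strong formulations of the control), taking the infimum over $u$ gives ``$\le$''.

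For the reverse inequality I would exhibit a control that realizes the optimal measure $d\mathbb{Q}^{*}/d\mathbb{P}=e^{-f(B)}/\mathbf{E}[e^{-f(B)}]$, for which the variational principle is an equality, $H(\mathbb{Q}^{*}\,|\,\mathbb{P})+\mathbf{E}_{\mathbb{Q}^{*}}[f(B)]=-\log\mathbf{E}[e^{-f(B)}]$. Because $f$ is bounded, this density is bounded above and below, so the martingale $M_t=\mathbf{E}[e^{-f(B)}\,|\,\mathcal{F}_t]/\mathbf{E}[e^{-f(B)}]$ is strictly positive; the martingale representation theorem for the Brownian sheet furnishes a predictable field $\phi$ with $M_t=1+\int_0^t\int_0^1\phi\,dW$, and putting $u^{*}=\phi/M$ gives $M_t=\mathcal{E}(u^{*})_t$, i.e. identifies $\mathbb{Q}^{*}$ as the Girsanov shift by $u^{*}$. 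One then checks $u^{*}\in\mathcal{P}_2$ and reads off $\mathbf{E}[\tfrac12\|u^{*}\|_{\mathcal{H}_T}^2+f(B^{u^{*}})]=-\log\mathbf{E}[e^{-f(B)}]$, which gives ``$\ge$'' and hence the equality.

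The main obstacle is this lower-bound construction. It requires the martingale representation theorem for the two-parameter Brownian sheet and, more delicately, a verification that the control $u^{*}$ it produces is genuinely admissible (predictable, with $\|u^{*}\|_{\mathcal{H}_T}<\infty$ almost surely and enough integrability for Girsanov to apply), together with the reconciliation between the ``weak'' representation just obtained and the ``strong'' formulation with $\mathbb{P}$-adapted controls recorded in the statement. Boundedness of $f$ is exactly what keeps this tractable: it bounds the optimal density away from $0$ and $\infty$, keeps the relative entropy finite, and legitimizes the truncation/approximation used in the ``$\le$'' direction to pass from bounded controls to a general $u\in\mathcal{P}_2$, the limit being controlled by lower semicontinuity of the cost functional.
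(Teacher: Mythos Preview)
The paper does not prove this theorem; it merely records it as the variational representation ``originally proved in \cite{budhiraja2000variational}''. So there is no proof in the paper to compare against. Your outline is in the spirit of the Bou\'e--Dupuis/Budhiraja--Dupuis argument, and the lower-bound half (optimal density, martingale representation for the sheet, identification of $u^{*}$) is the right plan, with exactly the admissibility and weak-versus-strong issues you flag.

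There is, however, a genuine slip in your upper bound. From ``under $\mathbb{Q}$ the field $W(ds,dy)-u(s,y)\,dy\,ds$ is a Brownian sheet'' you cannot conclude that ``$B$ under $\mathbb{Q}$ has the law of $B^{u}$ under $\mathbb{P}$'', hence not $\mathbf{E}_{\mathbb{Q}}[f(B)]=\mathbf{E}_{\mathbb{P}}[f(B^{u})]$. For a random adapted $u$ the $\mathbb{Q}$-law of the pair $(\tilde W,u)$ (with $\tilde W:=W-\int u$) need not equal the $\mathbb{P}$-law of $(W,u)$: $u$ is the same $\omega$-wise functional of $W$, not of $\tilde W$. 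A one-line counterexample in one dimension is $u\equiv W_{1}$. What Girsanov gives is only $\mathrm{Law}_{\mathbb{Q}}(\tilde W)=\mathrm{Law}_{\mathbb{P}}(W)$. Your entropy computation therefore yields $-\log\mathbf{E}_{\mathbb{P}}[e^{-f(B)}]\le \mathbf{E}_{\mathbb{Q}}[\tfrac12\|u\|^{2}+f(B)]$, which is the cost of a \emph{weak} control $(\tilde W,u)$ on $(\Omega,\mathbb{Q})$, not the quantity $\mathbf{E}_{\mathbb{P}}[\tfrac12\|u\|^{2}+f(B^{u})]$ in the statement.

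The fix is simpler than the weak/strong detour. Shift in the other direction: with $d\bar{\mathbb{Q}}/d\mathbb{P}=\exp\bigl(-\!\int u\,dW-\tfrac12\|u\|^{2}\bigr)$ (bounded $u$), $B^{u}$ is a Brownian sheet under $\bar{\mathbb{Q}}$, so $\mathbf{E}_{\mathbb{P}}[e^{-f(B)}]=\mathbf{E}_{\bar{\mathbb{Q}}}[e^{-f(B^{u})}]=\mathbf{E}_{\mathbb{P}}\bigl[e^{-\int u\,dW-\frac12\|u\|^{2}-f(B^{u})}\bigr]$. Jensen's inequality and $\mathbf{E}_{\mathbb{P}}[\int u\,dW]=0$ then give directly
\[
-\log\mathbf{E}_{\mathbb{P}}\bigl[e^{-f(B)}\bigr]\;\le\;\mathbf{E}_{\mathbb{P}}\Bigl[\tfrac12\|u\|_{\mathcal{H}_{T}}^{2}+f(B^{u})\Bigr],
\]
which is the desired inequality for the strong control $u$; the truncation from bounded to general $u\in\mathcal{P}_{2}$ goes through as you indicate.
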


%s3.1.1 #&#
\subsubsection{Sufficient conditions for the general Laplace principle\index{Laplace principle}}

Here, we briefly describe the result needed, in our context, for proving
the Laplace principle,\index{Laplace principle} and state our main result.

Let us first introduce some notations. For $\varepsilon >0$, denote by
$\mathcal{G}^{\varepsilon }: \mathcal{E}_{0} \times \mathcal{C} ([0,T]
\times [0,1]; \mathbb{R}) \rightarrow \mathcal{E}$ a measurable map,
where $\mathcal{E}_{0}$ stands for a compact subspace of $\mathcal{E}$
in which the initial condition $u_{0}$ takes values, and let
%
%e9 #&#
\begin{equation}
\label{BB} X^{\varepsilon , u_{0}}: = \mathcal{G}^{\varepsilon }
\bigl(u_{0}, h(\varepsilon )W\bigr).
\end{equation}
Later, we will state sufficient conditions for the Laplace principle\index{Laplace principle} for
$X^{\varepsilon , u_{0}}$ to hold uniformly in $u_{0}$ for compact
subsets of $\mathcal{E}_{0}$.

For any positive integer $N$, we introduce
\begin{equation*}
S^{N} := \bigl\{\phi \in \mathcal{H}_{T}
: \llVert \phi \rrVert _{\mathcal{H}
_{T}}^{2} \leqslant N \bigr\}
\end{equation*}
and
\begin{equation*}
\mathcal{P}_{2}^{N} := \bigl\{v(\omega ) \in
\mathcal{P}_{2}: v( \omega ) \in S^{N}, P\mbox{-a.s.} \bigr\}.
\end{equation*}
It is worth noticing that the space
$S^{N}$ is a compact metric space equipped with the weak topology\index{weak topology} on
$L^{2}([0,T]\times [0,1])$ and that $\mathcal{P}_{2}^{N}$ is the space
of controls, which plays a central role in the weak convergence\index{weak convergence} approach.

For $u\in \mathcal{H}_{T}$, define the element $\mathcal{I}(u)$ in
$\mathcal{C} ([0,T]\times [0,1]; \mathbb{R}) $ by
\begin{equation*}
\mathcal{I}(u) (t,x):=\int_{0}^{t} \int
_{0}^{x} u(s,y) ds dy, \quad t\in [0,T], \ x \in [0,1].
\end{equation*}

We are now in position to introduce the following result, due to
Budhiraja \xch{et al.}{and al.} \cite{budhiraja2008large}, ensuring sufficient
condition for the Laplacian principle to hold.
%
%p3.1 #&#
\begin{proposition}[Theorem $7$ in \cite{budhiraja2008large}]
\label{48}
Assume that there exists a measurable %m
\begin{equation*}
\mathcal{G}^{0}: \mathcal{E}_{0} \times \mathcal{C}
\bigl([0,T]\times [0,1]; \mathbb{R}\bigr) \rightarrow \mathcal{E},
\end{equation*}
such that the following hold:
\begin{itemize}%
\item[\textbf{(A1)}] For any integer $M>0$, any family $\{v^{\varepsilon
}; \varepsilon > 0\} \subset {\mathcal{P}_{2}}^{M}$ and $\{ u_{0}^{
\varepsilon }\} \subset \mathcal{E}_{0}$ such that $v^{\varepsilon }
\rightarrow v $ and $u_{0}^{\varepsilon }\rightarrow u_{0}$ in
distribution (as $S^{N}$-valued random elements), as $\varepsilon
\rightarrow 0$. Then ${\mathcal{G}}^{\varepsilon }(u_{0}^{\varepsilon
}, W + h(\varepsilon )\mathcal{I}(v^{\varepsilon })) \rightarrow
\mathcal{G}^{0}(u_{0},\mathcal{I}(u))$, in distribution as $\varepsilon
\rightarrow 0$;
\item[\textbf{(A2)}] For any integer $M\,{>}\,0$ and compact set
$K\,{\subset}\,\mathcal{E}_{0}$, the set
$\varGamma _{M,K}\,{:=}\,\{ \mathcal{G}^{0}(u_{0},\mathcal{I}(u));\allowbreak
u \in S^{M}, u_{0} \in K  \}$ is a compact subset of $\mathcal{E}$.
\end{itemize}
Then, the family $\{X^{\varepsilon , u_{0}};\, \varepsilon >0\}$ defined
by (\ref{BB}) satisfies the Laplace principle\index{Laplace principle} on $\mathcal{E}$ with
speed $\lambda ^{2}(\varepsilon )$ and rate function $I_{u_{0}}$ given,
for any $h \in \mathcal{E}$ and $u_{0} \in \mathcal{E}_{0}$, by
%
%e10 #&#
\begin{equation}
\label{rate} I_{u_{0}}(h):= \inf_{\{ v\in \mathcal{H}_{T}: h=
\mathcal{G}^{0}(u_{0}, \mathcal{I}(v))
\}} \biggl\{
\frac{1}{2} \llVert v \rrVert _{\mathcal{H}_{T}}^{2} \biggr
\},
\end{equation}
where the infimum over an empty set is taken to be $\infty $.
\end{proposition}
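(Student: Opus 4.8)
The plan is to deduce the Laplace principle from the variational representation stated just above, following the weak–convergence scheme, by establishing the Laplace upper and lower bounds separately. First I would apply that representation to the bounded functional $\frac{1}{\lambda^2(\varepsilon)}F\circ\mathcal{G}^\varepsilon(u_0,h(\varepsilon)\,\cdot\,)$ and perform the change of control variable that normalizes the cost to $\tfrac12\|v\|_{\mathcal{H}_T}^2$, so as to obtain
\begin{align*}
L^\varepsilon &:= -\lambda^2(\varepsilon)\log \mathbf{E}\exp\Bigl[-\tfrac{1}{\lambda^2(\varepsilon)}F\bigl(X^{\varepsilon,u_0}\bigr)\Bigr] \\
&= \inf_{v\in \mathcal{P}_2}\mathbf{E}\Bigl[\tfrac12\|v\|_{\mathcal{H}_T}^2 + F\bigl(\mathcal{G}^\varepsilon(u_0, W + h(\varepsilon)\mathcal{I}(v))\bigr)\Bigr].
\end{align*}
The crucial structural observation is that the argument of $F$ on the right is exactly the controlled process governed by hypothesis (A1). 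It then suffices to show $L^\varepsilon \to \inf_{f\in\mathcal{E}}\{F(f)+I_{u_0}(f)\}$, which I split into the two one-sided bounds.

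For the upper bound $\limsup_\varepsilon L^\varepsilon \leq \inf_{f}\{F(f)+I_{u_0}(f)\}$ (the right-hand side being finite, else there is nothing to prove), I fix $\delta>0$, choose $f_\delta$ with $F(f_\delta)+I_{u_0}(f_\delta)\leq \inf_f\{F+I_{u_0}\}+\delta$, and by the definition (\ref{rate}) of the rate function pick a \emph{deterministic} $v_\delta\in\mathcal{H}_T$ with $\mathcal{G}^0(u_0,\mathcal{I}(v_\delta))=f_\delta$ and $\tfrac12\|v_\delta\|_{\mathcal{H}_T}^2\leq I_{u_0}(f_\delta)+\delta$. Using this constant control in the representation gives $L^\varepsilon \leq \tfrac12\|v_\delta\|_{\mathcal{H}_T}^2 + \mathbf{E}\bigl[F(\mathcal{G}^\varepsilon(u_0, W+h(\varepsilon)\mathcal{I}(v_\delta)))\bigr]$. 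Since $v_\delta$ is deterministic it converges trivially in distribution, so (A1) yields $\mathcal{G}^\varepsilon(u_0, W+h(\varepsilon)\mathcal{I}(v_\delta))\to f_\delta$ in distribution, and the bounded continuity of $F$ lets bounded convergence pass the expectation to the limit. Thus $\limsup_\varepsilon L^\varepsilon\leq F(f_\delta)+I_{u_0}(f_\delta)+\delta\leq \inf_f\{F+I_{u_0}\}+2\delta$, and $\delta\to 0$ closes the bound.

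For the lower bound I select near-optimal controls $v^\varepsilon$ attaining $L^\varepsilon$ within $\varepsilon$. Boundedness of $F$ forces $\tfrac12\mathbf{E}\|v^\varepsilon\|_{\mathcal{H}_T}^2$ to be bounded uniformly in $\varepsilon$, from which—after a localization placing $v^\varepsilon$ into some $\mathcal{P}_2^N$ off a set of vanishing contribution, and using the weak compactness of $S^N$—I extract a subsequence with $v^\varepsilon\to v$ in distribution as $S^N$-valued elements. Then (A1) gives $\mathcal{G}^\varepsilon(u_0, W+h(\varepsilon)\mathcal{I}(v^\varepsilon))\to \mathcal{G}^0(u_0,\mathcal{I}(v))$ in distribution; invoking a Skorokhod representation to realize both convergences almost surely, the weak lower semicontinuity of $v\mapsto\tfrac12\|v\|_{\mathcal{H}_T}^2$, the continuity of $F$, and Fatou's lemma give
$$ \liminf_{\varepsilon\to 0} L^\varepsilon \geq \mathbf{E}\Bigl[\tfrac12\|v\|_{\mathcal{H}_T}^2 + F\bigl(\mathcal{G}^0(u_0,\mathcal{I}(v))\bigr)\Bigr] \geq \inf_{f\in\mathcal{E}}\{F(f)+I_{u_0}(f)\}, $$
the last step being immediate from the definition of $I_{u_0}$. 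Finally, hypothesis (A2) is what makes $I_{u_0}$ a good rate function: each sublevel set $\{I_{u_0}\leq M\}$ is contained in $\Gamma_{2M,\{u_0\}}$, hence precompact, while its lower semicontinuity follows from (A1) by a limiting argument analogous to the lower bound.

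I expect the lower bound to be the main obstacle, for two reasons. First, converting the uniform $L^2$-bound on the near-optimal controls into genuine tightness in the weak topology of $S^N$ requires the localization onto $\mathcal{P}_2^N$ together with the verification that the discarded event is negligible, which is where boundedness of $F$ is essential. Second, the passage to the limit must be organized so that the weak convergence of $v^\varepsilon$, the distributional convergence furnished by (A1), and the lower semicontinuity of the cost are handled simultaneously; doing this rigorously is cleanest through the Skorokhod representation, and it is here that the compactness supplied by (A2) keeps the limiting objects in the right space.
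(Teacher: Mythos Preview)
The paper does not supply its own proof of this proposition: it is quoted verbatim as Theorem~7 of Budhiraja--Dupuis--Maroulas \cite{budhiraja2008large} and used as a black box. Your sketch is the standard weak-convergence argument from that reference---variational representation, deterministic near-optimizers for the upper bound, near-optimal controls plus tightness and (A1) for the lower bound, and (A2) for goodness of the rate function---and is correct in outline, so there is nothing in the present paper to compare it against.
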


%s3.1.2 #&#
\subsubsection{Controlled processes for SPDEs\index{SPDEs} (\ref{1})}

In this subsection, we adapt the general scheme described above to study
moderate deviations \index{moderate deviations}  for the equation (\ref{1}).

We %state
denote by $\mathcal{E}= \mathcal{E}_{0}:= C([0,T]; L^{2}([0,1]))$ %to be
the space of solutions of (\ref{1}). As we are interested in proving
the Laplace principle\index{Laplace principle} for $\bar{u}^{\varepsilon }(t,x)$ defined by
(\ref{16}), we interpret $\bar{u}^{\varepsilon }$ as a functional of the
Brownian sheet\index{Brownian sheet} $W$. Indeed, using (\ref{3}) and (\ref{4}) we deduce that
$\bar{u}^{\varepsilon }(t,x)$ satisfies for all $\omega \in \varOmega $ and
all $t\in [0,T]$ the following equation
%
%e11 #&#
\begin{align}\label{51}
\bar{u}^{\varepsilon }(t,x)
& = \frac{1}{h(\varepsilon )}\int_{0} ^{t}\int_{0}^{1}G_{t-s}(x,y)\sigma
\bigl(u^{0}(s,y) + \sqrt{\varepsilon }h(\varepsilon) \bar{u}^{\varepsilon }(s,y)\bigr)W(dy,ds)\nonumber\\
&\quad{}- {\sqrt{\varepsilon }h(\varepsilon )}\int_{0}^{t}\int_{0}^{1}
\partial _{y} G_{t-s}(x,y) \bigl[\bar{u}^{\varepsilon }(s,y)\bigr] ^{2}dyds\nonumber\\
&\quad{}- 2\int_{0}^{t}\int_{0}^{1} \partial _{y} G_{t-s}(x, y) \bar{u}^{\varepsilon }(s,y)u^{0}(s, y)dyds,
\end{align}
for $dx$-almost all $x\in [0,T]$.

This implies (see Theorem IV.9.1. of \cite{ikeda2014stochastic})
the existence of a measurable mapping
\begin{equation*}
\mathcal{G}^{\varepsilon }: C\bigl([0, 1]; \mathbb{R}\bigr)\times C\bigl([0, T]
\times [0, 1];\mathbb{R}\bigr)\rightarrow C\bigl([0, T];L^{2}
\bigl([0, 1]\bigr)\bigr),
\end{equation*}
such that
\begin{equation*}
\bar{u}^{\varepsilon } = \mathcal{G}^{\varepsilon }(u_{0},W).
\end{equation*}
As a first step toward the conditions \textbf{(A1)} and \textbf{(A2)}
stated in Proposition \ref{48}, we define for $ v^{\varepsilon }
\in {\mathcal{P}}_{2}^{N}$,
%
%e12 #&#
\begin{equation}
\label{69} \bar{u}^{\varepsilon ,v^{\varepsilon }} := \mathcal{G}^{\varepsilon }\bigl(u
_{0}, W + h(\varepsilon )\mathcal{I}\bigl(v^{\varepsilon }\bigr)
\bigr).
\end{equation}
In Proposition \ref{18} below we will establish that the map
$\bar{u}^{\varepsilon ,v^{\varepsilon }}$ is the unique solution of the
following stochastic controlled analogue
of equation (\ref{51})
%
%e13 #&#
\begin{align}\label{8}
\bar{u}^{\varepsilon ,v^{\varepsilon }}(t,x) & =  \frac{1}{h(\varepsilon)}
\int_{0}^{t}\int_{{0}}^{1}G_{t-s}(x,y)
\sigma \bigl(u^{0}(s,y) + \sqrt{ \varepsilon }h(\varepsilon )
\bar{u}^{\varepsilon ,v^{\varepsilon }}(s,y)\bigr)W(ds,dy)\nonumber\\
&\quad{}- {\sqrt{\varepsilon }h(\varepsilon )}\int_{0}^{t}
\int_{0}^{1} \partial _{y}G_{t-s}(x,y)
\bigl[\bar{u}^{\varepsilon ,v^{\varepsilon}}(s,y) \bigr]^{2}dyds\nonumber\\
&\quad{}- 2\int_{0}^{t}\int_{0}^{1} \partial _{y} G_{t-s}(x, y)
\bar{u} ^{\varepsilon ,v^{\varepsilon }}(s,y)u^{0}(s,y) dyds\nonumber\\
&\quad{}+ \int_{0}^{t}\int_{{0}}^{1}G_{t-s}(x,y)\sigma
\bigl(u^{0}(s,y) + \sqrt{ \varepsilon }h(\varepsilon )\bar{u}^{\varepsilon ,v^{\varepsilon }}(s,y)\bigr)
v ^{\varepsilon }(s,y)dyds.
\end{align}
%
%and referred to us as
We will call it the controlled process.
Moreover, for any $v\in S^{N}$, we associate to (\ref{8}) the following
skeleton zero-noise equation:
%
%e14 #&#
\begin{align}\label{17}
\bar{u}^{v}(t,x) &= - 2\int_{0}^{t}
\int_{0}^{1} \partial _{y}
G_{t-s}(x,y) \bar{u}^{v}(s,y)u^{0}(s, y) dyds\nonumber\\
&\quad{}+ \int_{0}^{t}\int_{{0}}^{1}G_{t-s}(x,y)
\sigma \bigl(u^{0}(s,y)\bigr)v(s,y)dyds.
\end{align}
Existence and uniqueness of the solution $\bar{u}^{v}$ for (\ref{17})
is obtained in Proposition \ref{22} below, and thereby, we define the
map
%
%e15 #&#
\begin{equation}
\label{G0} \mathcal{G}^{0}\bigl(u_{0},
\mathcal{I}(v)\bigr): = \bar{u}^{v}.
\end{equation}

With these notations in mind, the main result of this section is stated
in the following
%
%t3.3 #&#
\begin{thm}
\label{main}
Assume that $u_{0}$ is continuous, $\sigma $ is bounded and globally
Lipschitz\index{globally Lipschitz} and that (\ref{15}) holds. Then the family of processes
$\{\bar{u}^{\varepsilon };\, \varepsilon >0\}$ satisfies a LDP on the
space $C([0,T]; L^{2}([0,1]))$ with speed $\lambda ^{2}(\varepsilon )$
and rate function given by
%
%e16 #&#
\begin{equation}
I(f) = \inf \biggl\{ \frac{1}{2} \llVert v \rrVert _{\mathcal{H}_{T}}^{2},
\ v \in \mathcal{H}_{T}, \ \mathcal{G}^{0}
\bigl(u_{0}, \mathcal{I}(v)\bigr): = f \biggr\}.
\end{equation}
\end{thm}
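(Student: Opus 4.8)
The plan is to invoke the general criterion of Proposition \ref{48}: since the Laplace principle and the (moderate) deviations principle are equivalent on the Polish space $\mathcal{E} = C([0,T]; L^2([0,1]))$, it suffices to verify conditions \textbf{(A1)} and \textbf{(A2)} for the maps $\mathcal{G}^\varepsilon$ and $\mathcal{G}^0$ introduced in (\ref{69}) and (\ref{G0}). The well-posedness needed to make these maps meaningful — existence and uniqueness for the controlled equation (\ref{8}) and for the skeleton equation (\ref{17}) — is supplied by Propositions \ref{18} and \ref{22}. Thus the whole proof reduces to the two weak-convergence statements, after which the rate function automatically takes the form (\ref{rate}), coinciding with the one in the statement.

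For condition \textbf{(A2)}, I would show that the solution map $v \mapsto \bar{u}^v$ of the skeleton equation (\ref{17}) is continuous from $S^M$, equipped with the weak topology of $\mathcal{H}_T$, into $\mathcal{E}$, and jointly continuous in $u_0 \in K$. Since $S^M$ is weakly compact and $K$ is compact, the image $\varGamma_{M,K}$ is then compact. Concretely, I would take $v_n \rightharpoonup v$ in $S^M$ together with $u_{0,n} \to u_0$ in $K$, derive a uniform bound $\sup_n \sup_{t} \llVert \bar{u}^{v_n}(t,\cdot) \rrVert_2 < \infty$ via Lemma \ref{20} (the $L^\beta$ bound (ii) controls the singular kernel $\partial_y G$ appearing in the cross term, while (i) and (iii) handle the $\sigma$-driven term) followed by Gronwall's inequality, establish equicontinuity in time using (iii)–(iv) and spatial compactness using (v), and finally identify $\bar{u}^{v_n} \to \bar{u}^v$ by passing to the limit termwise. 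The only nontrivial passage is the control term $\int_0^t\int_0^1 G_{t-s}(x,y)\sigma(u^0(s,y)) v_n(s,y)\,dyds$, which converges because $G_{t-s}(x,\cdot)\sigma(u^0(s,\cdot))$ is a strongly $\mathcal{H}_T$-convergent test function tested against the weakly convergent $v_n$.

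For condition \textbf{(A1)}, the key observation is that the moderate-deviations scaling suppresses the nonlinearity. Writing $a(\varepsilon) = \sqrt{\varepsilon}\,h(\varepsilon) \to 0$ from (\ref{15}), I would examine the four terms of the controlled equation (\ref{8}) as $\varepsilon \to 0$: (a) the stochastic integral carries a prefactor $1/h(\varepsilon)$ and, by the $L^2$-isometry together with Lemma \ref{20}(iii), has second moment of order $h(\varepsilon)^{-2} \to 0$, hence vanishes; (b) the quadratic self-interaction carries the prefactor $\sqrt{\varepsilon}\,h(\varepsilon) = a(\varepsilon) \to 0$ and, bounded uniformly in $\varepsilon$ via Lemma \ref{20}(ii), also vanishes; (c) the cross term $-2\int \partial_y G\,\bar{u}^{\varepsilon,v^\varepsilon} u^0$ persists and matches the first term of the skeleton (\ref{17}); (d) the control term converges to $\int G_{t-s}\sigma(u^0) v\, dyds$, since $\sqrt{\varepsilon}\,h(\varepsilon)\bar{u}^{\varepsilon,v^\varepsilon} \to 0$ inside the Lipschitz function $\sigma$ and $v^\varepsilon \rightharpoonup v$. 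Tightness of $\{\bar{u}^{\varepsilon,v^\varepsilon}\}$ in $\mathcal{E}$, obtained from the same uniform moment and equicontinuity estimates as in \textbf{(A2)}, then lets me extract a weakly convergent subsequence whose limit solves (\ref{17}); uniqueness of the skeleton solution (Proposition \ref{22}) identifies the limit as $\bar{u}^v = \mathcal{G}^0(u_0, \mathcal{I}(v))$ and upgrades subsequential convergence to full convergence in distribution.

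The principal obstacle throughout is the quadratic term together with the singular space derivative $\partial_y G$ of the Green kernel, exactly as stressed in the introduction. Even though term (b) disappears in the limit, bounding it uniformly in $\varepsilon$ — and establishing the tightness required in \textbf{(A1)} — demands extracting every bit of integrability from Lemma \ref{20}(ii), since the Lipschitz techniques available for regular drifts do not apply here. I expect the two delicate points to be (i) closing the Gronwall argument for the a priori bounds in the presence of the quadratic nonlinearity, possibly through a localization or stopping-time device, and (ii) justifying the weak-limit passage in the control term (d), where the weak convergence of $v^\varepsilon$ must be combined with the strong convergence of the $\sigma$-weighted kernel; both are standard in spirit but technically demanding for Burgers-type equations.
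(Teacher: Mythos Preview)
Your overall architecture is correct and matches the paper: invoke Proposition~\ref{48}, verify \textbf{(A1)} via tightness plus identification of subsequential limits with the unique skeleton solution, and verify \textbf{(A2)} by continuity of $v\mapsto\bar u^v$ under the weak topology on $S^M$. Your treatment of \textbf{(A2)} and of the limit identification in \textbf{(A1)} (the four terms (a)--(d)) is essentially what the paper does.

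There is, however, a real gap in how you propose to obtain tightness in \textbf{(A1)}. You write that tightness of $\{\bar u^{\varepsilon,v^\varepsilon}\}$ follows ``from the same uniform moment and equicontinuity estimates as in \textbf{(A2)}''. This does not go through: the skeleton equation~(\ref{17}) is \emph{linear} in $\bar u^v$, so Gronwall closes trivially there, whereas the controlled equation~(\ref{8}) retains the genuine quadratic term $\sqrt{\varepsilon}h(\varepsilon)[\bar u^{\varepsilon,v^\varepsilon}]^2$. A Gronwall argument for $\sup_t\|\bar u^{\varepsilon,v^\varepsilon}(t,\cdot)\|_2$ would require an a~priori bound on $\sup_t\|\bar u^{\varepsilon,v^\varepsilon}(t,\cdot)\|_2^2$, which is circular; and Lemma~\ref{20}(ii) by itself only controls the kernel, not the quadratic input. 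The paper does \emph{not} establish uniform moment bounds for the controlled process. Instead it decomposes $\bar u^{\varepsilon,v^\varepsilon}$ into the four integrals $I_i^{\varepsilon,v^\varepsilon}$, handles $I_1$ and $I_4$ by direct second-moment estimates (Lemma~\ref{tight1}), and for the quadratic pieces $I_2,I_3$ invokes Gy\"ongy's specialized tightness criterion (Lemma~\ref{19}), which requires only that $\theta_\varepsilon:=\sqrt{\varepsilon}h(\varepsilon)\sup_t\|\bar u^{\varepsilon,v^\varepsilon}(t,\cdot)\|_2^2$ be \emph{bounded in probability}. That weaker statement is obtained not by closing a Gronwall loop but by importing the truncation/approximation machinery from Gy\"ongy's existence proof (the sequence $\bar u^{\varepsilon,v^\varepsilon}_n$ and estimate~(\ref{130})), which gives $\sup_\varepsilon\mathbb P(\sup_t\|\bar u^{\varepsilon,v^\varepsilon}(t,\cdot)\|_2\ge c)\to0$ as $c\to\infty$. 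Your closing remark about ``a localization or stopping-time device'' gestures in this direction but does not identify the actual mechanism; without Lemma~\ref{19} (or an equivalent replacement) the tightness argument for the quadratic term does not close.
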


%r3.4 #&#
\begin{rk}
Note that the conclusion of Theorem \ref{main} is still valid for \xch{a quite}{a quiet}
large class of SPDEs containing stochastic Burgers equation. Namely,
consider the following class of SPDEs\index{SPDEs} introduced by Gy\"{o}ngy in
\cite{gyongy1998existence}:
%
%e17 #&#
\begin{align}\label{remark019}
\dfrac{\partial u^{\varepsilon }}{\partial t}(t,x)
&=\dfrac{\partial^{2} }{\partial x^{2}}u^{\varepsilon }(t,x)
+ \dfrac{\partial }{\partial x}g \bigl(u^{\varepsilon }(t,x) \bigr)
+ f\bigl(u^{\varepsilon}(t,x) \bigr)\nonumber\\
&\quad{}+ \sqrt{\varepsilon }\sigma \bigl(u^{\varepsilon }(t,x)\bigr)
{\dot{W}}(t,x), \quad (t,x)\in [0,T]\times [0,1],
\end{align}
with Dirichlet\index{Dirichlet boundary conditions}'s boundary conditions $u^{\varepsilon }(t,0)=u^{\varepsilon
}(t,1)=0$ for $t\in [0,T]$, and the initial condition $u^{\varepsilon
}(0,x)=u_{0}(x)$ for $x\in [0,1]$.
%Under
Suitable conditions on the
coefficients $f$, $g$ and $\sigma $, for instance, the quadratic growth
assumption on the nonlinear coefficient $g$, bring us %one
back to the case
of stochastic Burgers equation\index{stochastic Burgers equation} that we have considered in our paper.
Notice here that
%the closest papers
papers closest to ours
are
%the
two recent works by
S. Hu, R. Li and X. Wang \cite{hu2018central} and R. Zhang and
J. Xiong \cite{zhang2019semilinear}.
In particular, these authors established a moderate deviations \index{stochastic Burgers equation} principle for the class (\ref{remark019}).
%
%, which we learned
We learned about these works after we
finished the first version of this paper.
%\querymark{Q2}In particular, a moderate
%deviation principle is established for the class %(\ref{remark019}).
\end{rk}

%s3.2 #&#
\subsection{Proof of the main result}

We basically follow the same idea as in
\cite{budhiraja2008large} and \cite{setayeshgar2014large}.
According to Proposition \ref{48}, it suffices to check that the
conditions \textbf{(A1)} and \textbf{(A2)} are fulfilled. For
\textbf{(A1)}, we will establish well-posedness, tightness\index{tightness} and
convergence\index{convergence} of controlled processes. The condition \textbf{(A2)}, which
gives that $I$ is a rate function, will follow from the continuity\index{continuity} of
the map $\mathcal{G}^{0}$ with respect to the weak topology.\index{weak topology}

The proof of \textbf{(A1)} will be done in several steps.

\medskip
\textbf{Step 1:} Existence and uniqueness of controlled and limiting processes.

%p3.2 #&#
\begin{proposition}
\label{18}
\label{12a}
Assume that $\sigma $ is bounded and globally Lipschitz,\index{globally Lipschitz} and that
(\ref{15}) holds. Then, the $\mathrm{L}^{2}([0,1])$-valued process
$\{\bar{u}^{\varepsilon ,v^{\varepsilon }}(t), t\in [0,T]\}$ defined
by (\ref{69}) is the unique solution of the equation (\ref{8}).
\end{proposition}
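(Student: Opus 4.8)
The plan is to prove existence and uniqueness separately: existence by a Girsanov (change of measure) argument exploiting the definition (\ref{69}), and uniqueness by a localization–Gronwall argument. The quadratic term will be the main obstacle, and it is what forces the localization step.

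For existence, I would combine the functional representation $\bar{u}^{\varepsilon,v^{\varepsilon}} = \mathcal{G}^{\varepsilon}(u_{0}, W + h(\varepsilon)\mathcal{I}(v^{\varepsilon}))$ with a change of measure. Since $v^{\varepsilon}\in\mathcal{P}_{2}^{N}$, we have $\|v^{\varepsilon}\|_{\mathcal{H}_{T}}^{2}\leqslant N$, $\mathbb{P}$-a.s., so the exponential
\begin{equation*}
\mathcal{E}_{T} := \exp\biggl(-h(\varepsilon)\int_{0}^{T}\int_{0}^{1} v^{\varepsilon}(s,y)W(ds,dy) - \frac{h(\varepsilon)^{2}}{2}\int_{0}^{T}\int_{0}^{1}\bigl(v^{\varepsilon}(s,y)\bigr)^{2}dyds\biggr)
\end{equation*}
satisfies Novikov's condition (as $h(\varepsilon)$ is a fixed constant and the integral is bounded by $N$) and defines, via $d\tilde{\mathbb{P}}/d\mathbb{P} = \mathcal{E}_{T}$, a probability measure $\tilde{\mathbb{P}}$ equivalent to $\mathbb{P}$. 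Under $\tilde{\mathbb{P}}$ the shifted field $\tilde{W} := W + h(\varepsilon)\mathcal{I}(v^{\varepsilon})$ is a space-time Brownian sheet, so by the defining property of $\mathcal{G}^{\varepsilon}(u_{0},\cdot)$ the process $\bar{u}^{\varepsilon,v^{\varepsilon}} = \mathcal{G}^{\varepsilon}(u_{0},\tilde{W})$ solves (\ref{51}) with $W$ replaced by $\tilde{W}$, $\tilde{\mathbb{P}}$-a.s. Splitting the leading stochastic term by $\tilde{W}(ds,dy) = W(ds,dy) + h(\varepsilon)v^{\varepsilon}(s,y)dyds$ produces its $W$-counterpart together with the extra drift $\int_{0}^{t}\int_{0}^{1}G_{t-s}(x,y)\sigma(u^{0}+\sqrt{\varepsilon}h(\varepsilon)\bar{u}^{\varepsilon,v^{\varepsilon}})v^{\varepsilon}(s,y)dyds$, which is exactly the last line of (\ref{8}). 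Since $\tilde{\mathbb{P}}\sim\mathbb{P}$, the identity (\ref{8}) then holds $\mathbb{P}$-a.s., establishing that the process defined by (\ref{69}) is a solution.

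For uniqueness, I would argue pathwise. Let $U_{1},U_{2}$ be two $L^{2}([0,1])$-valued solutions of (\ref{8}) and set $Z:=U_{1}-U_{2}$. Subtracting the equations and taking $L^{2}(\Omega\times[0,1])$-norms, the stochastic term and the first linear drift are controlled by the Lipschitz property of $\sigma$ together with Lemma \ref{20}(iii), while the drift $-2\int\int\partial_{y}G\,Z\,u^{0}$ is handled by Lemma \ref{20}(ii) and the bound (\ref{6bis}) on $u^{0}$; all of these contribute terms of the form $c\int_{0}^{t}(\cdots)\,\mathbb{E}\|Z(s)\|_{2}^{2}\,ds$. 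The obstruction is the quadratic term, whose difference is $[U_{1}]^{2}-[U_{2}]^{2} = (U_{1}+U_{2})Z$ and is only locally Lipschitz. To absorb it I would introduce the stopping times $\tau_{M} := \inf\{t\geqslant 0 : \|U_{1}(t)\|_{2}\vee\|U_{2}(t)\|_{2}\geqslant M\}$, work on $[0,t\wedge\tau_{M}]$ where $U_{1}+U_{2}$ is bounded in $L^{2}$, and bound the quadratic contribution using Lemma \ref{20}(ii) with a suitable $\beta\in(\tfrac{1}{2},\tfrac{3}{2})$ combined with Hölder's and Minkowski's inequalities to dominate the singular $\partial_{y}G$ factor. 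Gronwall's lemma then yields $\mathbb{E}\|Z(t\wedge\tau_{M})\|_{2}^{2}=0$, and letting $M\to\infty$, using the continuity of the $L^{2}$-valued trajectories so that $\tau_{M}\to T$ a.s., gives $Z\equiv 0$. Combining this with the previous step shows that (\ref{69}) is the \emph{unique} solution of (\ref{8}). The main difficulty is precisely the quadratic nonlinearity: it cannot be handled by a global Lipschitz estimate and requires both the localization and the singular kernel bound of Lemma \ref{20}(ii).
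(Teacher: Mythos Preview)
Your existence argument via Girsanov is correct and is exactly what the paper does.

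For uniqueness, however, you take a different and unnecessarily laborious route. The paper simply reuses the Girsanov transformation: if $U_{1}$ and $U_{2}$ both solve (\ref{8}) under $\mathbb{P}$, then under the equivalent measure $\tilde{\mathbb{P}}$ (the one you constructed for existence) both $U_{1}$ and $U_{2}$ solve (\ref{51}) driven by the Brownian sheet $\tilde{W}$. Uniqueness for (\ref{51}) is already known from Gy\"{o}ngy's Theorem~2.1 in \cite{gyongy1998existence}, so $U_{1}=U_{2}$ $\tilde{\mathbb{P}}$-a.s., hence $\mathbb{P}$-a.s.\ by equivalence. This avoids entirely the localization--Gronwall machinery you propose: the quadratic nonlinearity has already been tamed once in Gy\"{o}ngy's uniqueness proof, and the change of measure lets you inherit that result rather than redo it. Your direct approach can be made to work (it is essentially a reproduction of Gy\"{o}ngy's argument inside the controlled equation), but note a small inconsistency: you announce a ``pathwise'' argument yet immediately pass to $\mathbb{E}\|Z\|_{2}^{2}$, and the step $\tau_{M}\to T$ a.s.\ for \emph{both} $U_{1},U_{2}$ presupposes non-explosion, which is precisely what the Girsanov identification with (\ref{51}) provides cleanly.
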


\begin{proof}
For $ v^{\varepsilon }\in {\mathcal{P}}_{2}^{N}$, set
\begin{align*}
dQ^{\varepsilon , v^{\varepsilon }}&:= \exp \Biggl\{-{\sqrt{h(\varepsilon )}}\int
_{0}^{t}\int_{0}^{1}v^{\varepsilon }(s,y)W(ds,dy)\\
&\quad{}-
\frac{1}{2}h( \varepsilon )\int_{0}^{t}
\int_{0}^{1}{v^{\varepsilon }}(s,y)^{2}dyds
\Biggr\}dP.
\end{align*}
Since $Q^{\varepsilon , v^{\varepsilon }}$ is defined through an
exponential martingale, it is a probability measure on $\varOmega $. And
thus, by the Girsanov theorem the process $\widetilde{W}$ defined by
\begin{equation*}
\widetilde{W}(dt,dx) = W(dt,dx) + h(\varepsilon )\int_{0}^{t}
\int_{0} ^{1}v^{\varepsilon }(s,y)dyds
\end{equation*}
is a space-time white noise under the probability measure $Q^{\varepsilon, v^{\varepsilon }}$.
Plugging $\widetilde{W}(dt,dx)$ in (\ref{8}) we
obtain (\ref{51}) with $\widetilde{W}(dt,dx)$ instead of ${W}(dt,dx)$.
Now, if ${u}$ denotes the unique solution of (\ref{51}) with
$\widetilde{W}(dt,dx)$ on the space $(\varOmega , \mathcal{F}, Q^{\varepsilon
, v^{\varepsilon }})$, then ${u}$ satisfies (\ref{8}), $Q^{\varepsilon
, v^{\varepsilon }}$ a.s. And hence by equivalence of probabilities,\index{probabilities}
${u}$ satisfies (\ref{8}), $P$ a.s.

For the uniqueness, if ${u}_{1}$ and ${u}_{2}$ are two solutions of
(\ref{8}) on $(\varOmega , \mathcal{F}, P)$, then ${u}_{1}$ and
${u}_{2}$ are solutions of (\ref{51}) governed by $\widetilde{W}(dt,dx)$
on $(\varOmega , \mathcal{F}, Q^{\varepsilon , v^{\varepsilon }})$. By the
uniqueness of the solution of (\ref{8}), we obtain ${u}_{1} = {u}_{2}$,
$Q^{\varepsilon , v^{\varepsilon }}$ a.s. And thus ${u}_{1} = {u}_{2}$,
$P$ a.s. by equivalence of probabilities.\index{probabilities}
\end{proof}

%p3.3 #&#
\begin{proposition}
\label{22}
Assume that $\sigma $ is bounded and globally Lipschitz.\index{globally Lipschitz} For any
$v\in {{S}}^{N}$, for some $N \in \mathbb{N}$, the equation (\ref{17})
admits a unique solution $\bar{u}^{v}$ belonging to $C([0,T];
\mathrm{L}^{2}([0,1]))$. Moreover, for any $q\geqslant 2$
%
%e18 #&#
\begin{equation}
\label{estm} \sup_{v\in {{S}}^{N}}\sup_{0\le t\le T}
\bigl\llVert \bar{u}^{v}(t,\cdot ) \bigr\rrVert _{2}^{q}<
\infty .
\end{equation}
\end{proposition}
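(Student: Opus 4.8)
**The plan is to prove existence and uniqueness via a Picard iteration / fixed-point argument in the space $C([0,T];\mathrm{L}^2([0,1]))$, and then establish the uniform bound \eqref{estm} by a Gronwall-type estimate.**

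The equation \eqref{17} is \emph{linear} in the unknown $\bar u^v$: the quadratic Burgers term has disappeared because it was multiplied by the vanishing factor $\sqrt{\varepsilon}h(\varepsilon)$ in \eqref{8}, and only the first-order term $-2\int_0^t\int_0^1\partial_y G_{t-s}(x,y)\,\bar u^v(s,y)u^0(s,y)\,dyds$ and the inhomogeneous forcing $\int_0^t\int_0^1 G_{t-s}(x,y)\sigma(u^0(s,y))v(s,y)\,dyds$ remain. First I would define the map $\Phi$ sending $w\in C([0,T];\mathrm{L}^2)$ to the right-hand side of \eqref{17} with $w$ substituted for $\bar u^v$, and show that $\Phi$ is a contraction (on a short time interval, then iterated to cover $[0,T]$), or equivalently run Picard iterates $\bar u^{v,n+1}=\Phi(\bar u^{v,n})$ starting from $\bar u^{v,0}=0$. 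The estimates from Lemma~\ref{20} are exactly what is needed: for the drift term I would bound the $\mathrm{L}^2$ norm using the H\"older-type inequality with part ii) (taking some $\beta$ with $\tfrac12<\beta<\tfrac32$) together with the boundedness \eqref{6bis} of $u^0$, producing a factor that is integrable in time and hence yields a Gronwall-closable recursion; for the forcing term I would use the Cauchy--Schwarz inequality in $\mathcal{H}_t$ together with part iii) of Lemma~\ref{20} and the boundedness of $\sigma$, noting that $\|v\|_{\mathcal{H}_T}^2\le N$ since $v\in S^N$.

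For the uniform moment bound \eqref{estm}, I would take two solutions (or the Picard limit) and estimate $\sup_{0\le r\le t}\|\bar u^v(r,\cdot)\|_2^q$. Writing the first term via $\partial_y G$ and applying H\"older in the spatial variable controlled by Lemma~\ref{20}~ii), and the second term via Cauchy--Schwarz controlled by Lemma~\ref{20}~iii) and $\|v\|_{\mathcal H_T}^2\le N$, one obtains an inequality of the form
\begin{equation*}
\sup_{0\le r\le t}\bigl\llVert \bar u^v(r,\cdot)\bigr\rrVert_2^q \le c_{N,T} + c\int_0^t \Bigl(\sup_{0\le r\le s}\bigl\llVert \bar u^v(r,\cdot)\bigr\rrVert_2^q\Bigr)\psi(s)\,ds,
\end{equation*}
where $\psi$ is integrable on $[0,T]$ (the time-singularity of $\partial_y G$ being integrable thanks to the choice of $\beta$). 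A Gronwall argument then gives a bound depending only on $N$, $T$, $q$, $\sup_t\|u^0\|_2$, and $\|\sigma\|_\infty$, and crucially \emph{not} on the particular $v\in S^N$; taking the supremum over $v\in S^N$ preserves the bound, yielding \eqref{estm}.

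\textbf{The main obstacle is the first-order spatial derivative $\partial_y G_{t-s}$ in the drift term,} whose kernel is singular near $s=t$ and is not square-integrable in the naive way. The key device for handling it is Lemma~\ref{20}~ii), which is integrable only for the exponent range $\tfrac12<\beta<\tfrac32$; one must therefore pair $\partial_y G$ in $\mathrm{L}^\beta_{dx\,ds}$ against $\bar u^v\,u^0$ in the conjugate exponent and exploit that $u^0$ is uniformly bounded in $\mathrm{L}^2$ by \eqref{6bis}. Balancing these exponents correctly so that the resulting time-weight is integrable, and so that the Gronwall constant remains uniform in $v$, is the delicate point; everything else reduces to routine applications of Cauchy--Schwarz and the boundedness of $\sigma$. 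Since the quadratic Burgers nonlinearity is absent from \eqref{17}, the usual Burgers difficulty does not arise here and the argument stays within the standard linear SPDE toolbox.
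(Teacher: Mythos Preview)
Your overall strategy---fixed-point argument for the linear equation \eqref{17}, then a Gronwall estimate for the uniform bound \eqref{estm}---matches the paper's proof in the Appendix. One technical correction, however: Lemma~\ref{20}~ii) by itself will not produce the Gronwall-closable time weight you describe, because it only bounds the \emph{global} integral $\int_0^t\!\int_0^1|\partial_x G_{t-s}|^\beta\,dx\,ds$, and pairing this against $\bar u^v u^0$ via H\"older forces the conjugate exponent $\beta'>3$ on the unknown, which is too strong for data merely in $C([0,T];L^2)$. The paper instead invokes the smoothing estimate of Lemma~\ref{gyongylemma} (with $\rho=2$, $\lambda=1$), which gives directly
\[
\Bigl\|\int_0^t\!\!\int_0^1 \partial_y G_{t-s}(\cdot,y)\,w(s,y)u^0(s,y)\,dy\,ds\Bigr\|_2 \le c\int_0^t (t-s)^{-3/4}\,\bigl\|w(s,\cdot)u^0(s,\cdot)\bigr\|_1\,ds,
\]
after which Cauchy--Schwarz $\|wu^0\|_1\le\|w\|_2\|u^0\|_2$ together with \eqref{6bis} closes the recursion in $C([0,T];L^2)$. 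Apart from this, the paper carries out the contraction in the weighted norm $\int_0^T e^{-\lambda t}\|w(t,\cdot)\|_2^2\,dt$ with $\lambda$ large rather than on a short interval and then iterating, but that is an equivalent cosmetic choice.
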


\begin{proof}
The proof follows from a standard fixed point
argument, and for the convenience of the reader, we include it in the
Appendix.
\end{proof}

\textbf{Step 2:} Tightness\index{tightness} of the family $  (u^{\varepsilon
,v^{\varepsilon }}  )_{\varepsilon > 0}$ in $C([0,T];
L^{2}([0,1]))$.
\medskip

Let $  (v^{\varepsilon }  )_{\varepsilon }$ be a family of
elements from ${\mathcal{P}_{2}}^{N}$ such that $v^{\varepsilon }
\rightarrow v $ in distribution, as $S^{N}$-valued random elements, when
$\varepsilon \rightarrow 0$.

We have the following proposition.
%
%p3.4 #&#
\begin{proposition}
\label{25}Assume that $u_{0}$ is continuous, $\sigma $ is bounded and globally
Lipschitz,\index{globally Lipschitz} and that (\ref{15}) holds. Then $  (\bar{u}^{\varepsilon
,v^{\varepsilon }}  )_{\varepsilon }$ is tight in $C([0,T]; L
^{2}([0,1]))$.
\end{proposition}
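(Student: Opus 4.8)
The plan is to establish tightness in $C([0,T];L^2([0,1]))$ via the standard two-part criterion: (i) uniform (in $\varepsilon$) moment bounds for $\bar u^{\varepsilon,v^\varepsilon}$ in a suitable space, together with (ii) a modulus-of-continuity estimate controlling both the spatial $L^2$-norm increments in time and the time-regularity of the process. Since $C([0,T];L^2([0,1]))$ is the path space, I would aim to show that the laws of $\bar u^{\varepsilon,v^\varepsilon}$ form a tight family by verifying an Aldous-type or Kolmogorov-type criterion: concretely, produce constants (independent of $\varepsilon$ and of $v^\varepsilon\in\mathcal P_2^N$) such that

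\begin{equation*}
\sup_{\varepsilon>0}\;\mathbf{E}\Big(\sup_{0\le t\le T}\big\|\bar u^{\varepsilon,v^\varepsilon}(t,\cdot)\big\|_2^{q}\Big)<\infty
\quad\text{and}\quad
\mathbf{E}\big\|\bar u^{\varepsilon,v^\varepsilon}(t',\cdot)-\bar u^{\varepsilon,v^\varepsilon}(t,\cdot)\big\|_2^{q}\le c\,|t'-t|^{\gamma}
\end{equation*}

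for some $\gamma>0$ and some $q\ge 2$, which by the classical relative-compactness results for $C([0,T];L^2)$ yields tightness.

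First I would set up the four terms of the mild formulation \eqref{8} and estimate each in $L^2([0,1])$-norm. The stochastic (Walsh) integral term is handled by the Burkholder–Davis–Gundy inequality together with the $L^2$-kernel bounds in Lemma \ref{20}(iii), using that $\sigma$ is bounded; note that the prefactor $1/h(\varepsilon)\to 0$, which is helpful. The drift term carrying $\sqrt\varepsilon h(\varepsilon)[\bar u^{\varepsilon,v^\varepsilon}]^2$ is the quadratic (Burgers) nonlinearity and must be controlled using the factor $\sqrt\varepsilon h(\varepsilon)=a(\varepsilon)\to 0$ from \eqref{15}, combined with the $\partial_y G$ estimate in Lemma \ref{20}(ii) (applied with an exponent $\beta\in(1/2,3/2)$ and Hölder's inequality in the $y$-variable). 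The linear term $2\int\partial_y G\,\bar u^{\varepsilon,v^\varepsilon}u^0$ is handled similarly using the uniform bound \eqref{6bis} on $u^0$. The control term $\int G\,\sigma(\cdots)v^\varepsilon$ is bounded using boundedness of $\sigma$, the Cauchy–Schwarz inequality and $\|v^\varepsilon\|_{\mathcal H_T}^2\le N$. Feeding these into a Gronwall argument should give the uniform moment bound; the time-increment estimate then follows by repeating these computations with the kernel-difference bounds in Lemma \ref{20}(iii)–(iv), which supply the crucial $\sqrt{t'-t}$ factors.

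The main obstacle will be the quadratic nonlinearity $[\bar u^{\varepsilon,v^\varepsilon}]^2$, exactly as the authors warn in the introduction: unlike a Lipschitz drift, it is not $L^2$-controllable by a single copy of $\|\bar u^{\varepsilon,v^\varepsilon}\|_2$, so one cannot close a naive Gronwall estimate directly. The decisive observation is that this term appears multiplied by $a(\varepsilon)=\sqrt\varepsilon h(\varepsilon)\to 0$, so it is asymptotically negligible provided the moment bounds are already in hand; the delicate point is to arrange the estimates so that the vanishing prefactor absorbs the supercritical growth without circular reasoning. I would handle this by first deriving an a priori bound on $\sup_t\|\bar u^{\varepsilon,v^\varepsilon}(t)\|_2^q$ for each fixed $\varepsilon$ (its existence being guaranteed by Proposition \ref{18}), then showing the bound is uniform in $\varepsilon$ by exploiting $a(\varepsilon)\le 1$ for small $\varepsilon$; the supercritical quadratic term is then estimated by interpolating the $L^2$-norm against a higher norm, or by invoking the uniform estimates of Section \ref{sec4} toward the central limit theorem. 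Once this uniform control is secured, the tightness follows routinely from the modulus-of-continuity estimate and the standard compactness criterion.
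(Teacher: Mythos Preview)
Your decomposition into the four mild-formula terms and your treatment of the stochastic integral and control terms ($I_1$ and $I_4$) via moment bounds plus kernel time-increment estimates match the paper's Lemma~\ref{tight1}. The gap is in your handling of the quadratic drift $I_2$ (and, by the same mechanism, the linear cross term $I_3$).

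You propose to obtain a \emph{uniform} $L^q$ moment bound $\sup_{\varepsilon}\mathbf{E}\sup_t\|\bar u^{\varepsilon,v^\varepsilon}(t)\|_2^q<\infty$ and then run a Kolmogorov-type tightness argument. But the Gronwall step does not close: after applying Lemma~\ref{gyongylemma} to $I_2$ you get a term of order $a(\varepsilon)\int_0^t(t-s)^{-3/4}\|\bar u^{\varepsilon,v^\varepsilon}(s)\|_2^2\,ds$, which is quadratic in the unknown. Having $a(\varepsilon)\le 1$ does not remove the quadratic growth; it only makes the coefficient bounded. Proposition~\ref{18} (which you invoke for the a~priori bound at fixed $\varepsilon$) is a pure existence/uniqueness statement obtained by Girsanov, and gives no quantitative moment control. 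Your two proposed escapes---interpolation against a higher norm, or the Section~\ref{sec4} estimates---do not apply: you have no bound on any higher norm of $\bar u^{\varepsilon,v^\varepsilon}$, and Propositions~\ref{53}--\ref{68} concern $u^\varepsilon$, not the controlled process $\bar u^{\varepsilon,v^\varepsilon}$ (the extra drift $\sigma(\cdots)v^\varepsilon$ is absent there, and their proof relies on Lemma~\ref{56}, an exponential tail bound specific to the stochastic convolution).

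The paper avoids moments for $I_2,I_3$ entirely. It uses Gy\"ongy's tightness criterion (Lemma~\ref{19}), which requires only that $\theta_\varepsilon:=a(\varepsilon)\sup_t\|\bar u^{\varepsilon,v^\varepsilon}(t)\|_2^2$ be \emph{bounded in probability}. Since $a(\varepsilon)\to 0$, this reduces to showing $\sup_{\varepsilon\le\varepsilon_0}\mathbb{P}(\sup_t\|\bar u^{\varepsilon,v^\varepsilon}(t)\|_2\ge c)\to 0$ as $c\to\infty$, which the paper obtains not by a direct estimate but by recognizing $\bar u^{\varepsilon,v^\varepsilon}$ as the solution of a Gy\"ongy-type SPDE (equation~\eqref{120}) and importing the truncation/approximation argument from the proof of Theorem~2.1 in~\cite{gyongy1998existence} (see \eqref{130}). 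This detour through boundedness in probability and the truncated approximants $\bar u_n^{\varepsilon,v^\varepsilon}$ is the key idea your proposal is missing.
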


\begin{proof}
Recall that
%
%e19 #&#
\begin{align}\label{decompo}
\bar{u}^{\varepsilon ,v^{\varepsilon }}(t,x)
& =  \frac{1}{h(\varepsilon)}\int_{0}^{t}\int_{{0}}^{1}G_{t-s}(x,y)
\sigma \bigl(u^{0}(s,y) + \sqrt{ \varepsilon }h(\varepsilon )
\bar{u}^{\varepsilon ,v^{\varepsilon }}(s,y) \bigr)W(ds,dy)
\nonumber\\[-2pt]
&\quad{}- {\sqrt{\varepsilon }h(\varepsilon )}\int_{0}^{t}\int_{0}^{1} \partial _{y}
G_{t-s}(x,y) \bigl[\bar{u}^{\varepsilon ,v^{\varepsilon}}(s,y) \bigr]^{2}dyds
\nonumber\\[-2pt]
&\quad{}- 2\int_{0}^{t}\int_{0}^{1}\partial _{y} G_{t-s}(x, y)
\bar{u} ^{\varepsilon ,v^{\varepsilon }}(s,y)u^{0}(s,y) dyds
\nonumber\\[-2pt]
&\quad{}+ \int_{0}^{t}\int_{{0}}^{1}G_{t-s}(x,y)\sigma
\bigl(u^{0}(s,y) + \sqrt{\varepsilon }h(\varepsilon )
\bar{u}^{\varepsilon ,v^{\varepsilon}}(s,y) \bigr)v^{\varepsilon }(s,y)dyds
\nonumber\\[-2pt]
& =: \sum_{i=1}^{4}I_{i}^{\varepsilon ,v^{\varepsilon }}(t,x),
\end{align}
where $I_{i}^{\varepsilon ,v^{\varepsilon }}(t,x)$, $i=1, 2, 3, 4$,
stands for the $i$\textsuperscript{\textit{th}} summand of the RHS of the above equation.

In view of (\ref{decompo}), in order to prove the claim of Proposition
\ref{25}, we will state and prove the next two lemmas which give the
tightness\index{tightness} of each summand $I_{i}^{\varepsilon ,v^{\varepsilon }}$,
$ i=1, 2, 3, 4$.
\end{proof}

We first consider the cases where $i=1$ and $i=4$. Using Theorem 4.10
of Chapter 2 in \cite{karatzas2012brownian}, the following lemma
states sufficient conditions for tightness.\index{tightness}

%l3.5 #&#
\begin{lem}
\label{tight1}
Assume the same conditions as in Proposition \ref{25}. For $i=1$ or
$4$, we have
%
%e20 #&#
\begin{equation}
\label{23} \lim_{\zeta \longrightarrow +\infty }\sup_{\varepsilon >0}P
\bigl( \bigl\llvert I _{i}^{\varepsilon ,v^{\varepsilon }}(t,x) \bigr\rrvert > \zeta
\bigr) = 0, \quad \text{for any } (t, x)\in [0,T]\times [0,1],
\end{equation}
and for any $\zeta >0$
%
%e21 #&#
\begin{equation}
\label{24} \hspace{-1cm} \quad \lim_{\delta \longrightarrow 0}\sup
_{\varepsilon >0}P \Bigl(\sup_{ \llvert t-t' \rrvert  +  \llvert x-y \rrvert \leqslant \delta } \bigl\llvert
I_{i}^{\varepsilon ,v^{
\varepsilon }}(t,x) - I_{i}^{\varepsilon ,v^{\varepsilon }}
\bigl(t',y\bigr) \bigr\rrvert > \zeta \Bigr) = 0.
\end{equation}
In particular, the families $  (I_{1}^{\varepsilon ,v^{\varepsilon
}}  )_{\varepsilon }$ and $  (I_{4}^{\varepsilon ,v^{\varepsilon
}}  )_{\varepsilon }$ are tight in $C([0,T]; L^{2}([0,1]))$.
\end{lem}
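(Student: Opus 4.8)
The plan is to verify the two quantitative conditions (\ref{23}) and (\ref{24}) for $i=1,4$ and then invoke the tightness criterion of Karatzas and Shreve. The decisive observation is that, although both $I_1^{\varepsilon,v^{\varepsilon}}$ and $I_4^{\varepsilon,v^{\varepsilon}}$ contain $\sigma$ evaluated at the a priori unknown controlled process $u^{0}+\sqrt{\varepsilon}h(\varepsilon)\bar{u}^{\varepsilon,v^{\varepsilon}}$, the boundedness of $\sigma$ removes any dependence on this process: bounding $|\sigma(\cdot)|$ by $\|\sigma\|_{\infty}$ everywhere yields moment estimates that are uniform in $\varepsilon$ and in the control. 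This is precisely why $i=1$ and $i=4$ are the accessible terms, in contrast with $I_2^{\varepsilon,v^{\varepsilon}}$ and $I_3^{\varepsilon,v^{\varepsilon}}$, whose quadratic and $u^{0}$-product structure must be treated separately.

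For (\ref{23}), fix $p>8$. For $i=1$, I would apply the Burkholder--Davis--Gundy inequality to the Walsh stochastic integral and then bound $\sigma$ by $\|\sigma\|_{\infty}$, obtaining
\[
\mathbf{E}\bigl|I_1^{\varepsilon,v^{\varepsilon}}(t,x)\bigr|^{p} \leqslant \frac{c}{h(\varepsilon)^{p}}\,\|\sigma\|_{\infty}^{p}\,\Bigl(\int_{0}^{t}\!\!\int_{0}^{1} G_{t-s}^{2}(x,y)\,dy\,ds\Bigr)^{p/2} \leqslant \frac{c}{h(\varepsilon)^{p}},
\]
where the last step uses the second estimate of Lemma \ref{20}(iii); since $h(\varepsilon)\to\infty$, the right-hand side is bounded uniformly in $\varepsilon$ (indeed it tends to $0$). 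For $i=4$, the Cauchy--Schwarz inequality together with $\|v^{\varepsilon}\|_{\mathcal{H}_T}^{2}\leqslant N$ gives the deterministic uniform bound $|I_4^{\varepsilon,v^{\varepsilon}}(t,x)|\leqslant \|\sigma\|_{\infty}(cN)^{1/2}$, again by Lemma \ref{20}(iii). In both cases (\ref{23}) then follows from Markov's inequality.

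For the modulus estimate (\ref{24}), I would control the space--time increments separately. A time increment $I_i^{\varepsilon,v^{\varepsilon}}(t',x)-I_i^{\varepsilon,v^{\varepsilon}}(t,x)$ splits into the contribution of the new strip $[t,t']$, estimated by the first bound of Lemma \ref{20}(iii), and the contribution of the kernel difference $G_{t'-s}-G_{t-s}$ on $[0,t]$, estimated by Lemma \ref{20}(iv); a spatial increment is handled by Lemma \ref{20}(v) after using the symmetry $G_{s}(x,y)=G_{s}(y,x)$ of the Dirichlet Green kernel. Applying BDG for $i=1$ and Cauchy--Schwarz (with $\|v^{\varepsilon}\|_{\mathcal{H}_T}\leqslant\sqrt{N}$) for $i=4$, and bounding $\sigma$ by $\|\sigma\|_{\infty}$, I expect a bound of the form
\[
\mathbf{E}\bigl|I_i^{\varepsilon,v^{\varepsilon}}(t,x)-I_i^{\varepsilon,v^{\varepsilon}}(t',y)\bigr|^{p} \leqslant c\bigl(|t-t'|^{p/4}+|x-y|^{p/2}\bigr),
\]
uniformly in $\varepsilon$ and in $v^{\varepsilon}\in\mathcal{P}_2^{N}$. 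Choosing $p$ large enough that both exponents beat the dimension of the parameter set $[0,T]\times[0,1]$, a Garsia--Rodemich--Rumsey (or multiparameter Kolmogorov) argument converts this into a uniform-in-$\varepsilon$ bound on the modulus of continuity, from which (\ref{24}) follows by Markov's inequality. Finally, (\ref{23}) and (\ref{24}) are exactly the hypotheses of Theorem 4.10 of Chapter 2 in \cite{karatzas2012brownian}, which yields tightness of the random fields $I_1^{\varepsilon,v^{\varepsilon}}$ and $I_4^{\varepsilon,v^{\varepsilon}}$ in $C([0,T]\times[0,1])$, hence in $C([0,T];L^{2}([0,1]))$.

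The main obstacle is the joint space--time modulus (\ref{24}): the increment estimates are anisotropic (Hölder order $1/4$ in time against $1/2$ in space), so care is needed in combining them and in making the passage from increment moments to a uniform modulus bound genuinely uniform in $\varepsilon$ and in the control $v^{\varepsilon}$. Everything else is driven by the kernel estimates of Lemma \ref{20} and by the boundedness of $\sigma$, which is what keeps all constants free of the unknown process $\bar{u}^{\varepsilon,v^{\varepsilon}}$.
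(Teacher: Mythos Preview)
Your proposal is correct and follows essentially the same route as the paper: BDG for $I_1$, Cauchy--Schwarz with $\|v^{\varepsilon}\|_{\mathcal{H}_T}^{2}\leqslant N$ for $I_4$, boundedness of $\sigma$ to eliminate the unknown process, and the kernel estimates of Lemma~\ref{20} for both the pointwise bound and the increment bound. The only difference is one of presentation: the paper works with second moments and simply asserts that (\ref{23}) and (\ref{24}) follow, whereas you explicitly take $p>8$ and invoke a Garsia--Rodemich--Rumsey/Kolmogorov argument to pass from increment moments at fixed points to the uniform modulus condition (\ref{24}); your version is the more careful one, since a second-moment increment bound alone does not directly control the supremum inside the probability in (\ref{24}).
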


\begin{proof}
Let $x, y \in [0,1]$ and $t, t' \in [0,T]$ such that
$t' \leqslant t$. To prove (\ref{23}) and (\ref{24}), it is enough to
exhibit upper bounds for the square moments of $I_{i}^{\varepsilon ,v
^{\varepsilon }}(t,x)$ and $I_{i}^{\varepsilon ,v^{\varepsilon }}(t,x)
- I_{i}^{\varepsilon ,v^{\varepsilon }}(t',y)$ for $i= 1$ and $i=4$.

Using the Burkholder--Davis--Gundy{} inequality, the boundedness\index{boundedness} of
$\sigma $, Lemma \ref{20} and the condition (\ref{15}) we infer that
%
%e22 #&#
\begin{align}\label{100}
&\mathbf{E} \bigl( \bigl\llvert I_{1}^{\varepsilon ,v^{\varepsilon }}(t,x)
\bigr\rrvert ^{2} \bigr)\nonumber\\
&\quad  \leqslant  c. h^{-2}(
\varepsilon ). \mathbf{E} \int_{0}^{t}\int
_{0}^{1}G_{t-s}^{2}(x,y)
\sigma ^{2} \bigl(u^{0}(s,y) + \sqrt{ \varepsilon }h(
\varepsilon )\bar{u}^{\varepsilon ,v^{\varepsilon }}(s,y) \bigr)dyds
\nonumber\\
&\quad  \leqslant  c.\int_{0}^{t}\int
_{0}^{1}G_{t-s}^{2}(x,y)dyds,
\end{align}
which is finite. On the other hand, the same arguments as above yield
%
%e23 #&#
\begin{align}\label{101}
&  \mathbf{E} \bigl( \bigl\llvert I_{1}^{\varepsilon ,v^{\varepsilon }}(t,x)
- I_{1}^{\varepsilon ,v^{\varepsilon }}\bigl(t',y\bigr) \bigr\rrvert
^{2} \bigr)
\nonumber
\\[-2pt]
&\quad  = h^{-2}(\varepsilon ).\mathbf{E}\Biggl\{ \int
_{0}^{t'}\int_{0}
^{1}\bigl[G_{t-s}(x,z) - G_{t'-s}(y,z)\bigr]\nonumber\\
&\qquad \qquad \qquad\ \times\sigma \bigl(u^{0}(s,y) + \sqrt{ \varepsilon }h(\varepsilon )
\bar{u}^{\varepsilon ,v^{\varepsilon }}(s,y) \bigr)W(ds,dz)
\nonumber
\\[-2pt]
& \qquad \qquad \qquad\ +  \int_{t'}^{t}
\int_{0}^{1}\xch{G_{t-s}}{[G_{t-s}}(y,z) \sigma
\bigl(u^{0}(s,y) + \sqrt{\varepsilon }h(\varepsilon )
\bar{u}^{\varepsilon ,v^{\varepsilon }}(s,y) \bigr)W(ds,dz)\Biggr\} ^{2}
\nonumber
\\[-2pt]
&\quad  \leqslant c \Biggl\{\int_{0}^{t'}\int
_{0}^{1}\bigl[G_{t-s}(x,z) -
G_{t'-s}(y,z)\bigr]^{2}dzds + \int_{t'}^{t}
\int_{0}^{1}G_{t-s}^{2}(y,z)dzds
\Biggr\}
\nonumber
\\[-2pt]
&\quad  \leqslant c \bigl( \bigl\llvert t-t' \bigr\rrvert
^{\frac{1}{2}} + \bigl\llVert x- x' \bigr\rrVert
^{\frac{1}{2}} \bigr).
\end{align}
Therefore, (\ref{23}) and (\ref{24}) hold by (\ref{100}) and
(\ref{101}), respectively.

To deal with $  (I_{4}^{\varepsilon ,v^{\varepsilon }}  )
_{\varepsilon }$, we use the Cauchy--Schwarz inequality and Lemma \ref{20}
to write
%
%e24 #&#
\begin{align}
\label{102} \mathbf{E} \bigl( \bigl\llvert I_{4}^{\varepsilon ,v^{\varepsilon }}(t,x)
\bigr\rrvert ^{2} \bigr) & \leqslant  c \mathbf{E} \Biggl(\int
_{0}^{t}\int_{0}
^{1} \bigl\llvert G_{t-s}(x,y) v^{\varepsilon }(s,y)
\bigr\rrvert dyds \Biggr)^{2}
\nonumber
\\
& \leqslant  c \bigl\llVert v^{\varepsilon } \bigr\rrVert _{\mathcal{H}_{T}}^{2}.
\int_{0} ^{t}\int_{0}^{1}G_{t-s}^{2}(x,y)dyds
\nonumber
\\
& \leqslant  c(N),
\end{align}
where $c(N)$ is a constant depending on $N$. Similarly,
%
%e25 #&#
\begin{align}\label{103}
& \mathbf{E} \bigl( \bigl\llvert I_{4}^{\varepsilon ,v^{\varepsilon }}(t,x)
- I_{4}^{\varepsilon ,v^{\varepsilon }}\bigl(t',y\bigr) \bigr\rrvert
^{2} \bigr)
\nonumber\\
&\quad = \mathbf{E}\Biggl\{ \int_{0}^{t'}
\int_{0}^{1}\bigl[G_{t-s}(x,z) - G
_{t'-s}(y,z)\bigr] \sigma \bigl(u^{\varepsilon ,v^{\varepsilon }}(s,z) \bigr)v
^{\varepsilon }(s,y)dzds
\nonumber\\
& \qquad +  \int_{t'}^{t}\int
_{0}^{1}\xch{G_{t-s}}{[G_{t-s}}(y,z) \sigma
\bigl(u^{\varepsilon ,v^{\varepsilon }}(s,z) \bigr)v^{\varepsilon }(s,y)dzds\Biggr\}
^{2}
\nonumber
\\
&\quad  \leqslant c \Biggl\{\int_{0}^{t'}\int
_{0}^{1}\bigl[G_{t-s}(x,z) -
G_{t'-s}(y,z)\bigr]^{2}dzds + \int_{t'}^{t}
\int_{0}^{1}G_{t-s}^{2}(y,z)dzds
\Biggr\}
\nonumber
\\
&\quad  \leqslant c \bigl( \bigl\llvert t-t' \bigr\rrvert
^{\frac{1}{2}} + \bigl\llVert x- x' \bigr\rrVert
^{\frac{1}{2}} \bigr).
\end{align}
Therefore, (\ref{23}) and (\ref{24}) hold by (\ref{102}) and
(\ref{103}), respectively.
\end{proof}

For the tightness\index{tightness} of $  (I_{2}^{\varepsilon ,v^{\varepsilon }}  )
_{\varepsilon }$, we follow an idea introduced in
\cite{gyongy1998existence} which is essentially based on Lemma
\ref{19} in the Appendix. More precisely, we state the following

%l3.6 #&#
\begin{lem}
\label{tight2}
Assume the same conditions as in Proposition \ref{25}. Then, the
families $(I_{2}^{\varepsilon ,v^{\varepsilon }})_{\varepsilon }$ and
$(I_{3}^{\varepsilon ,v^{\varepsilon }})_{\varepsilon }$ are uniformly
tight in $C([0,T]; L^{2}([0,1]))$.
\end{lem}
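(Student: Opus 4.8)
The plan is to prove tightness of $(I_2^{\varepsilon,v^\varepsilon})_\varepsilon$ and $(I_3^{\varepsilon,v^\varepsilon})_\varepsilon$ by a factorization-type argument rather than by the direct moment estimates used in Lemma \ref{tight1}. The obstacle here, compared with $I_1$ and $I_4$, is the appearance of the spatial derivative $\partial_y G_{t-s}$ together with the \emph{quadratic} term $[\bar u^{\varepsilon,v^\varepsilon}]^2$ in $I_2$ (and the product $\bar u^{\varepsilon,v^\varepsilon}\,u^0$ in $I_3$). The derivative kernel $\partial_y G$ is only $L^\beta$-integrable for $\tfrac12<\beta<\tfrac32$ by Lemma \ref{20}(ii), so one cannot simply apply a Cauchy--Schwarz/$L^2$ bound as for $I_4$; a naive second-moment estimate would require $L^2$-control of $\partial_y G$, which fails. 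This is why the authors follow the idea of Gy\"ongy \cite{gyongy1998existence} built on Lemma \ref{19}.

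First I would recast each of $I_2^{\varepsilon,v^\varepsilon}$ and $I_3^{\varepsilon,v^\varepsilon}$ as an integral operator applied to a suitable drift. Writing
\begin{equation*}
I_2^{\varepsilon,v^\varepsilon}(t,x) = -\sqrt{\varepsilon}\,h(\varepsilon)\int_0^t\int_0^1 \partial_y G_{t-s}(x,y)\,\psi^\varepsilon(s,y)\,dy\,ds,
\end{equation*}
with $\psi^\varepsilon(s,y):=[\bar u^{\varepsilon,v^\varepsilon}(s,y)]^2$, and analogously $I_3$ with integrand $2\,\bar u^{\varepsilon,v^\varepsilon}u^0$, the strategy is to control $\psi^\varepsilon$ in an appropriate $L^p$-in-space, $L^q(\Omega)$-sense using the uniform moment bounds that the controlled process enjoys (the analogue of \eqref{estm} from Proposition \ref{22}, together with \eqref{6bis}). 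The factor $\sqrt{\varepsilon}\,h(\varepsilon)=a(\varepsilon)\to 0$ in front of $I_2$ is what ultimately tames the quadratic term, so a key point is to show the relevant norm of $\psi^\varepsilon$ grows no faster than a negative power of this prefactor can absorb.

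The main step is then to invoke Lemma \ref{19} of the Appendix (the Gy\"ongy-type compactness criterion for operators with kernel $\partial_y G$): one verifies that the map sending a drift $g$ into $\int_0^t\int_0^1\partial_y G_{t-s}(x,y)g(s,y)\,dy\,ds$ is, uniformly over the relevant family, a compact (hence tightness-inducing) operator from the space in which $\psi^\varepsilon$ and $\bar u^{\varepsilon,v^\varepsilon}u^0$ live into $C([0,T];L^2([0,1]))$. Concretely, I would establish a uniform bound of the form
\begin{equation*}
\sup_{\varepsilon>0}\mathbf{E}\,\bigl\llVert \psi^\varepsilon \bigr\rrVert_{\mathcal{H}_T}^{q} < \infty
\end{equation*}
for suitable $q$, use H\"older's inequality together with Lemma \ref{20}(ii) to get spatial and temporal H\"older-type increment estimates for $I_i^{\varepsilon,v^\varepsilon}$ in $L^2([0,1])$, and conclude tightness in $C([0,T];L^2([0,1]))$ via the Arzel\`a--Ascoli / Kolmogorov criterion embedded in Lemma \ref{19}.

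The hard part will be obtaining the uniform-in-$\varepsilon$ moment control of the quadratic integrand $[\bar u^{\varepsilon,v^\varepsilon}]^2$, since the controlled equation \eqref{8} is itself nonlinear through this term; one must feed the a priori estimate back into \eqref{8} and close a Gronwall-type inequality, exploiting that the prefactor $\sqrt{\varepsilon}h(\varepsilon)$ is small so the quadratic self-interaction does not blow up the bound. Once this uniform moment estimate is in hand, the compactness of the $\partial_y G$-operator from Lemma \ref{19} does the remaining work, and $I_3$ is handled identically (indeed more easily, as its integrand is only linear in $\bar u^{\varepsilon,v^\varepsilon}$ and $u^0$ is uniformly bounded in $L^2$ by \eqref{6bis}).
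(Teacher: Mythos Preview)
Your strategy—write $I_2^{\varepsilon,v^\varepsilon}$ and $I_3^{\varepsilon,v^\varepsilon}$ as the $\partial_y G$-operator applied to a random drift and invoke Lemma~\ref{19}—is exactly the paper's approach. The difference lies in \emph{what} you propose to verify and \emph{how}.

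Lemma~\ref{19} does not ask for moment bounds: it asks only that the family $\theta_\varepsilon:=\sup_{t\le T}\|\zeta_\varepsilon(t,\cdot)\|_q$ be \emph{bounded in probability}. Here, with $q=1$ and $\zeta_\varepsilon=\sqrt{\varepsilon}h(\varepsilon)(\bar u^{\varepsilon,v^\varepsilon})^2$, this amounts to
\[
\lim_{c\to\infty}\sup_{\varepsilon\le\varepsilon_0}\mathbb{P}\Bigl(\sup_{t\le T}\|\bar u^{\varepsilon,v^\varepsilon}(t,\cdot)\|_2\ge c\Bigr)=0,
\]
since the prefactor $\sqrt{\varepsilon}h(\varepsilon)\le 1$ for small $\varepsilon$. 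Your plan to establish $\sup_\varepsilon\mathbf{E}\|\psi^\varepsilon\|_{\mathcal{H}_T}^q<\infty$ via a Gronwall argument fed back into \eqref{8} is stronger than needed and, more importantly, is precisely the step that does \emph{not} close: the quadratic term $\sqrt{\varepsilon}h(\varepsilon)\|\bar u^{\varepsilon,v^\varepsilon}\|_2^2$ enters the Gronwall inequality with the square of the unknown, and a small prefactor alone does not turn a quadratic differential inequality into a linear one uniformly in the (a priori unbounded) solution. This is the very difficulty the paper flags in the introduction regarding Burgers-type equations.

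The paper circumvents this entirely. It observes that $\bar u^{\varepsilon,v^\varepsilon}$ solves an SPDE of the form \eqref{120}, with coefficients $g_\varepsilon,f_\varepsilon,\sigma_\varepsilon$ that, thanks to the boundedness of $u^0$ and the smallness of $\sqrt{\varepsilon}h(\varepsilon)$, satisfy Gy\"ongy's structural hypotheses \emph{uniformly} in $\varepsilon\le\varepsilon_0$. It then imports, from inside Gy\"ongy's existence proof in \cite{gyongy1998existence}, two facts: (i) the truncated approximations $\bar u^{\varepsilon,v^\varepsilon}_n$ satisfy $\lim_{c\to\infty}\sup_\varepsilon\mathbb{P}(\sup_t\|\bar u^{\varepsilon,v^\varepsilon}_n(t,\cdot)\|_2\ge c/2)=0$, and (ii) $\bar u^{\varepsilon,v^\varepsilon}_n\to\bar u^{\varepsilon,v^\varepsilon}$ in probability in $C([0,T];L^2)$. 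A splitting $\mathbb{P}(\|\bar u^{\varepsilon,v^\varepsilon}\|\ge c)\le\mathbb{P}(\|\bar u^{\varepsilon,v^\varepsilon}-\bar u^{\varepsilon,v^\varepsilon}_n\|\ge c/2)+\mathbb{P}(\|\bar u^{\varepsilon,v^\varepsilon}_n\|\ge c/2)$ then gives boundedness in probability without ever touching moments. So the missing idea in your proposal is to drop the moment/Gronwall route and instead extract boundedness in probability from the truncation machinery already built into Gy\"ongy's well-posedness proof.
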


\begin{proof} The proof of the tightness\index{tightness} of $  (I_{3}^{\varepsilon
,v^{\varepsilon }}  )_{\varepsilon }$ will be omitted since it can
be done similarly to this of $  (I_{2}^{\varepsilon ,v^{\varepsilon
}}  )_{\varepsilon }$.

To show the tightness\index{tightness} of $  (I_{2}^{\varepsilon ,v^{\varepsilon }}  )
_{\varepsilon }$, we will apply Lemma \ref{19} with $q=1$, $\rho = 2$
and $\zeta _{\varepsilon }(t,\cdot ): = \sqrt{\varepsilon }h(\varepsilon
) (\bar{u}^{\varepsilon ,v^{\varepsilon }})^{2}(t,\cdot )$. Set
\begin{equation*}
\theta _{\varepsilon } := \sqrt{\varepsilon }h(\varepsilon ) \sup
_{0\leqslant t \leqslant T} \bigl\llVert \bigl(\bar{u}^{\varepsilon ,v^{\varepsilon
}}
\bigr)^{2}(t,\cdot ) \bigr\rrVert _{1} = \sqrt{
\varepsilon }h(\varepsilon ) \sup_{0\leqslant t \leqslant T} \bigl\llVert
\bar{u}^{\varepsilon ,v^{\varepsilon
}}(t,\cdot ) \bigr\rrVert _{2}^{2}.
\end{equation*}
According to Lemma \ref{19}, it suffices to show that $(
\theta _{\varepsilon })_{\varepsilon }$ is bounded in probability. i.e.
%
%e26 #&#
\begin{eqnarray}
\label{110} \lim_{c \longrightarrow +\infty }\sup_{\varepsilon > 0}
\mathbb{P} (\theta _{\varepsilon } \geqslant c ) = 0.
\end{eqnarray}
Taking into account the condition (\ref{15}), there exists $
\varepsilon _{0} > 0$ such that $\sqrt{\varepsilon }h(\varepsilon )
\leqslant 1$ for all $\varepsilon \leqslant \varepsilon _{0}$.
Consequently
\begin{align*}
\sup_{\varepsilon \leqslant \varepsilon _{0}}\mathbb{P} (\theta _{\varepsilon
} \geqslant c )
& =  \sup_{\varepsilon \leqslant \varepsilon
_{0}}\mathbb{P} \biggl(\sup_{0\leqslant t \leqslant T}
\bigl\llVert \bar{u}^{
\varepsilon ,v^{\varepsilon }}(t,\cdot ) \bigr\rrVert _{2}^{2}
\geqslant \frac{c}{\sqrt{
\varepsilon }h(\varepsilon )} \biggr)
\nonumber
\\
& \leqslant  \sup_{\varepsilon \leqslant \varepsilon _{0}}\mathbb{P} \Bigl(\sup
_{0\leqslant t \leqslant T} \bigl\llVert \bar{u}^{\varepsilon ,v^{
\varepsilon }}(t,\cdot ) \bigr
\rrVert _{2}^{2} \geqslant c \Bigr).
\end{align*}
Then, to prove (\ref{110}), it is enough to show that
%
%e27 #&#
\begin{eqnarray}
\lim_{c \longrightarrow +\infty } \sup_{\varepsilon \leqslant \varepsilon _{0}}\mathbb{P} \Bigl(
\sup_{0
\leqslant t \leqslant T} \bigl\llVert \bar{u}^{\varepsilon ,v^{\varepsilon }}(t, \cdot )
\bigr\rrVert _{2} \geqslant c \Bigr) = 0.
\end{eqnarray}
For this purpose, returning to (\ref{decompo}) we note that
$\bar{u}^{\varepsilon ,v^{\varepsilon }}$ corresponds to the following
SPDE
%
%e28 #&#
\begin{align}\label{120}
\dfrac{\partial \bar{u}^{\varepsilon ,v^{\varepsilon }}}{\partial t}(t,x)
& = \Delta \bar{u}^{\varepsilon ,v^{\varepsilon }}(t,x) + \dfrac{
\partial g_{\varepsilon }}{\partial x} \bigl(t, x, u^{\varepsilon , v
^{\varepsilon }}(t,x) \bigr) +
f_{\varepsilon } \bigl(t, x, u^{
\varepsilon , v^{\varepsilon }}(t,x) \bigr)
\nonumber
\\
&\quad{}+ \sigma _{\varepsilon } \bigl(t, x, \bar{u}^{\varepsilon ,v^{
\varepsilon }}(t,x)
\bigr){\dot{W}}(t,x),
\end{align}
where
\begin{align*}
g_{\varepsilon }(t, x, r) &:= -\sqrt{\varepsilon }h(\varepsilon )r
^{2} - 2ru^{0}(t,x),\\
f_{\varepsilon }(t, x, r) &:= \sigma   (u^{0}(t,x) + \sqrt{\varepsilon }h(\varepsilon )r  )v^{\varepsilon }(t,x)
\quad \mbox{and}\\
\sigma _{\varepsilon }(t, x, r) &:= \frac{1}{h(\varepsilon
)}\sigma   (u^{0}(t,x) +\sqrt{\varepsilon }h(\varepsilon )r  ).
\end{align*}

According to Theorem 2.1 in \cite{gyongy1998existence}, the
continuity\index{continuity} of the initial condition $u_{0}$ implies the continuity\index{continuity} of
the solution $u^{0}$ of the equation (\ref{2}) on the compact set
$[0,T]\times [0,1]$. Consequently, $u^{0}$ is bounded.

This fact combined with the condition (\ref{15}) allows us to %see
consider the
function $g_{\varepsilon }$ as a sum of two functions $g_{\varepsilon
}^{1}$ and $g_{\varepsilon }^{2}$ satisfying major quadratic and linear %major
conditions respectively, uniformly in $\varepsilon $ being less than certain~$
\varepsilon _{0}$.

Using again the condition (\ref{15}) and the hypotheses on the function
$\sigma $, we see that $\sigma _{\varepsilon }$ is bounded and globally
Lipschitzian,\index{globally Lipschitz} uniformly in $\varepsilon $ being less than certain
$\varepsilon _{0}$.

Thus, the equation (\ref{120})
%is covered by
belongs to the class of semi-linear
SPDE studied in \cite{gyongy1998existence}, and for which the
existence and uniqueness of the solution $\bar{u}^{\varepsilon ,v^{
\varepsilon }}$ is showed by an approximation procedure. This procedure
%consisted
is to define a sequence of truncated equations, and to establish
existence and some convergence results for the corresponding sequence
of solutions $  (\bar{u}^{\varepsilon ,v^{\varepsilon }}_{n}  )
_{n}$, see \cite{gyongy1998existence,foondun2017large,setayeshgar2014large}. In fact, in
the course of the proof of Theorem 2.1 in
\cite{gyongy1998existence} it was shown that
%
%e29 #&#
\begin{equation}
\label{130} \lim_{c \longrightarrow \infty } \sup_{0 < \varepsilon \leqslant \varepsilon _{0}}
\mathbb{P} \biggl(\sup_{0\leqslant t \leqslant T} \bigl\llVert
\bar{u}^{\varepsilon ,v^{\varepsilon }} _{n}(t, \cdot ) \bigr\rrVert
_{2} \geqslant \frac{c}{2} \biggr) =0,
\end{equation}
and that $  (\bar{u}^{\varepsilon ,v^{\varepsilon }}_{n}  )
_{n}$ converges in probability in $C([0,T]; L^{2}([0,1]))$ to the
solution $\bar{u}^{\varepsilon ,v^{\varepsilon }}$ of (\ref{decompo}).

Now, observe that
\begin{align*}
&\sup_{0 < \varepsilon \leqslant \varepsilon _{0}}\mathbb{P} \Bigl\{ \Bigl(\sup
_{0\leqslant t \leqslant T} \bigl\llVert \bar{u}^{\varepsilon ,v^{
\varepsilon }}(t,\cdot ) \bigr
\rrVert _{2} \Bigr) \geqslant c \Bigr\}\\
&\quad  \leqslant  \sup
_{0 < \varepsilon \leqslant \varepsilon _{0}}\mathbb{P} \biggl(\sup_{0\leqslant t \leqslant T} \bigl
\llVert \bar{u}^{\varepsilon ,v^{\varepsilon }}(t, \cdot ) - \bar{u}^{\varepsilon ,v^{\varepsilon }}_{n}(t,
\cdot ) \bigr\rrVert _{2} \geqslant \frac{c}{2} \biggr)
\\
&\qquad{}+ \sup_{0 < \varepsilon \leqslant \varepsilon _{0}}\mathbb{P} \biggl(\sup_{0\leqslant t \leqslant T}
\bigl\llVert \bar{u}^{\varepsilon ,v^{\varepsilon }} _{n}(t, \cdot ) \bigr\rrVert
_{2} \geqslant \frac{c}{2} \biggr).
\end{align*}
Then, as $c$ tends to infinity, the estimate (\ref{130}) yields
\begin{align*}
&\lim_{c \longrightarrow +\infty } \sup_{0 < \varepsilon \leqslant \varepsilon _{0}}\mathbb{P} \Bigl\{
\Bigl(\sup_{0\leqslant t \leqslant T} \bigl\llVert \bar{u}^{\varepsilon ,v^{\varepsilon }}(t,
\cdot ) \bigr\rrVert _{2} \Bigr) \geqslant c \Bigr\}\\
&\quad  \leqslant \lim
_{c \longrightarrow +\infty } \sup_{0 < \varepsilon \leqslant \varepsilon _{0}} \mathbb{P} \biggl(\sup
_{0\leqslant t \leqslant T} \bigl\llVert \bar{u}^{\varepsilon ,v^{\varepsilon }}(t, \cdot ) -
\bar{u}^{\varepsilon ,v^{\varepsilon }}_{n}(t, \cdot ) \bigr\rrVert
_{2} \geqslant \frac{c}{2} \biggr).
\end{align*}
By letting $n$ tend to infinity and using the convergence\index{convergence} in
probability of $\bar{u}^{\varepsilon ,v^{\varepsilon }}_{n}$ to
$\bar{u}^{\varepsilon ,v^{\varepsilon }}$ we get
\begin{eqnarray*}
\lim_{c \longrightarrow +\infty } \sup_{0 < \varepsilon \leqslant \varepsilon _{0}}\mathbb{P} \Bigl\{
\Bigl(\sup_{0\leqslant t
\leqslant T} \bigl\llVert \bar{u}^{\varepsilon ,v^{\varepsilon }}(t,
\cdot ) \bigr\rrVert _{2} \Bigr) \geqslant c \Bigr\} = 0.
\end{eqnarray*}
Hence, by applying Lemma \ref{19} we obtain the tightness property for
$  (I_{2}^{\varepsilon ,v^{\varepsilon }}  )_{\varepsilon }$.
%\end{proof}

\medskip
\textbf{Step 3:} Convergence\index{convergence} to the limit equation.
\medskip

Having shown the tightness\index{tightness} of each $I_{i}^{\varepsilon ,v^{\varepsilon
}}$ for $i=1, 2, 3, 4$, by Prohorov's theorem, we can extract a
subsequence, that we continue to denote by $\varepsilon $, and along
which each of these processes and $\bar{u}^{\varepsilon ,v^{\varepsilon
}}$
%converges
converge in distribution (as $S^{N}$-valued random elements) in
$C([0,T]; L^{2}([0,1]))$ to limits denoted respectively by $I_{i}^{0,v}$
for $i=1, 2, 3, 4$, and $\bar{u}^{0,v}$. We will show that
\begin{align*}
I_{1}^{0,v} & =  0,
\\
I_{2}^{0,v} & =  0,
\\
I_{3}^{0,v} & =  - 2\int_{0}^{t}
\int_{0}^{1} \partial _{y}
G_{t-s}(x, y) {u}^{0, v}(s,y)u^{0}(s, y) dyds,
\\
I_{4}^{0,v} & =  \int_{0}^{t}
\int_{{0}}^{1}G_{t-s}(x,y)\sigma
\bigl(u^{0}(s,y)\bigr)v(s,y)dyds,
\end{align*}
and the proof will be completed by the uniqueness result given in
Proposition \ref{22}.

For $i=1$, Lemma 3 in \cite{budhiraja2008large} ensures the
convergence\index{convergence} of $  (I_{1}^{\varepsilon , v^{\varepsilon }}  )
_{\varepsilon }$ to $0$ in probability in
$C  ([0,T]\times [0,1]  )$. And, while the convergence\index{convergence} in
probability in $C([0, T]\times [0, 1])$ implies the one in $C([0,T]; L
^{2}([0,1]))$, hence $  (I_{1}^{\varepsilon , v^{\varepsilon }}  )
_{\varepsilon }$ converges to $0$ in probability in $C([0, T];
L^{2}([0,1]))$ too.

To handle the convergence\index{convergence} of each of the other terms, we invoke the
Skorohod representation theorem and
%assuming
assume the almost sure convergence
on a larger common probability space.

For $i=2$, applying Lemma \ref{gyongylemma} with $\rho = 2$ and
$\lambda =1$, we deduce there exists a constant $c > 0$ such that
\begin{align*}
\bigl\llVert I_{2}^{\varepsilon , v^{\varepsilon }}(t, \cdot ) \bigr\rrVert
_{2}  \leqslant c\sqrt{\varepsilon }h(\varepsilon ) \int
_{0}^{t}(t-s)^{-
\frac{3}{4}}\|
\bar{u}^{\varepsilon ,v^{\varepsilon }}(s, \cdot \xch{)}{))}
\| _{2}^{2}ds.
\end{align*}
And since $  (\bar{u}^{\varepsilon ,v^{\varepsilon }}  )_{
\varepsilon }$ converges a.s. in $C([0,T]; L^{2}([0,1]))$ to
$\bar{u}^{0,v}$, then there exists $\varepsilon _{0} > 0$ small enough
such that
%
%e30 #&#
\begin{eqnarray}
\label{90} \sup_{\varepsilon \in ]0, \varepsilon _{0}]}\sup_{s\in [0, T]}
\bigl\llVert \bar{u}^{\varepsilon ,v^{\varepsilon }}(s, \cdot ) \bigr\rrVert _{2} <
\infty , \quad \text{a.s.}
\end{eqnarray}
%
%then,
Further, there exists a constant $c > 0$ such that for all $ 0 < \varepsilon
\leqslant \varepsilon _{0}$
\begin{align*}
\sup_{t\in [0,T]} \bigl\llVert I_{2}^{\varepsilon , v^{\varepsilon }}(t,
\cdot ) \bigr\rrVert _{2}  \leqslant  c\sqrt{\varepsilon }h(
\varepsilon\xch{ ),}{ ).} \quad \text{a.s.}
\end{align*}
Thus, $  (I_{2}^{\varepsilon , v^{\varepsilon }}  )_{\varepsilon
}$ converges a.s. to $0$ in $C([0, T]; L^{2}([0,1]))$ as $\varepsilon
$ tends to $0$.

For $i=3$, let $\tilde{I}_{3}^{0,v}$ denote the RHS term
of ${I}_{3}^{0,v}$. Applying again Lemma \ref{gyongylemma} and
the Cauchy--Schwarz inequality,
we conclude that there exists a constant $c > 0$ such that
\begin{align*}
\bigl\llVert I_{3}^{\varepsilon , v^{\varepsilon }}(t, \cdot ) -
\tilde{I}_{3}^{0,v}(t, \cdot ) \bigr\rrVert
_{2} & \leqslant  c \int_{0}
^{t}(t-s)^{-\frac{3}{4}} \bigl\llVert \bigl(\bar{u}^{\varepsilon ,v^{
\varepsilon }}(s,
\cdot ) - \bar{u}^{0,v}(s,\cdot ) \bigr)u^{0}(s, \cdot )
\bigr\rrVert _{1}ds
\\
& \leqslant  c\int_{0}^{t}(t-s)^{-\frac{3}{4}}
\bigl\llVert \bar{u}^{
\varepsilon ,v^{\varepsilon }}(s,\cdot ) - \bar{u}^{0,v}(s,
\cdot ) \bigr\rrVert _{2} \bigl\llVert u^{0}(s, \cdot )
\bigr\rrVert _{2}ds.
\end{align*}

Using the estimation (\ref{6bis}) and the boundedness\index{boundedness} of $\bar{u}^{
\varepsilon ,v^{\varepsilon }}$ and $\bar{u}^{0,v}$ in $C([0,T];\break  L
^{2}([0,1]))$, we get
\begin{align*}
&\bigl\llVert I_{3}^{\varepsilon , v^{\varepsilon }}(t, \cdot ) -
\tilde{I}_{3}^{0,v}(t, \cdot ) \bigr\rrVert
_{2}\\
&\quad  \leqslant  c \sup_{s\in [0,T]} \bigl\llVert
\bar{u}^{\varepsilon ,v^{\varepsilon }}(s,\cdot ) - \bar{u}^{0,v}(s, \cdot ) \bigr
\rrVert _{2}\sup_{s\in [0,T]} \bigl\llVert
u^{0}(s, \cdot ) \bigr\rrVert _{2} \int
_{0}^{t}(t-s)^{-\frac{3}{4}}ds
\\
&\quad  \leqslant  c \sup_{s\in [0,T]} \bigl\llVert
\bar{u}^{\varepsilon ,v^{\varepsilon
}}(s,\cdot ) - \bar{u}^{0,v}(s, \cdot ) \bigr
\rrVert _{2}.
\end{align*}
Again, since $  (\bar{u}^{\varepsilon ,v^{\varepsilon }}  )
_{\varepsilon }$ converges a.s. in $C([0,T]; L^{2}([0,1]))$ to
$\bar{u}^{0,v}$, we obtain the a.s. convergence\index{convergence} of $I_{3}^{\varepsilon
, v^{\varepsilon }}$ to $\tilde{I}_{3}^{0,v}$ in $C([0,T];
L^{2}([0,1]))$. And by the uniqueness of the limit and the continuity\index{continuity}
of $\tilde{I}_{3}^{0,v}$, we conclude that ${I}_{3}^{0,v} = \tilde{I}
_{3}^{0,v}$.

Concerning $i=4$, let $\tilde{I}_{4}^{0,v}$ denote the RHS term of ${I}_{4}^{0,v}$. We have
\begin{align*}
& I_{4}^{\varepsilon , v^{\varepsilon }}(t, \cdot ) - \tilde{I}_{4}
^{0,v}(t, \cdot )
\\
&\quad  = \int_{0}^{t}\int_{0}^{1}G_{t-s}(x,y)
\bigl[\sigma \bigl(u^{0}(s,y) + \sqrt{\varepsilon }h(\varepsilon )
\bar{u}^{\varepsilon ,v^{\varepsilon
}}(s,y)\bigr)v^{\varepsilon }(s,y)\\
&\qquad\qquad\qquad\qquad\qquad  - \sigma
\bigl(u^{0}(s,y)\bigr)v(s,y) \bigr]dyds
\\
& \quad  = \int_{0}^{t}\int_{0}^{1}G_{t-s}(x,y)
\bigl[\sigma \bigl(u^{0}(s,y) + \sqrt{\varepsilon }h(\varepsilon )
\bar{u}^{\varepsilon ,v^{\varepsilon
}}(s,y)\bigr)\\
&\qquad\qquad\qquad\qquad\qquad  - \sigma \bigl(u^{0}(s,y)\bigr)
\bigr]v^{\varepsilon }(s,y)dyds
\\
& \qquad + \int_{0}^{t}\int
_{0}^{1}G_{t-s}(x,y)
\bigl[v^{\varepsilon
}(s,y) - v(s,y) \bigr]\sigma \bigl(u^{0}(s,y)
\bigr)dyds
\\
& \quad  =: J_{4, 1}^{\varepsilon }(t,x) + J_{4, 2}^{\varepsilon }(t,x).
\end{align*}
Then,
\begin{equation*}
\bigl\llVert I_{4}^{\varepsilon , v^{\varepsilon }}(t, \cdot ) -
\tilde{I}_{4} ^{(0,v)}(t, \cdot ) \bigr\rrVert
_{2} \leqslant \bigl\llVert J_{4, 1}^{\varepsilon }(t,
\cdot ) \bigr\rrVert _{2} + \bigl\llVert J_{4, 2}^{\varepsilon }(t,
\cdot ) \bigr\rrVert _{2}.
\end{equation*}
For $J_{4, 1}^{\varepsilon }$, we use Lemma \ref{gyongylemma}, the Lipschitz
condition on $\sigma $ and the Cauchy--Schwarz inequality to obtain
\begin{align*}
&\bigl\llVert J_{4, 1}^{\varepsilon }(t,\cdot ) \bigr\rrVert
_{2}\\
 &\quad  \leqslant  c \int_{0}
^{t}(t-s)^{-\frac{3}{4}} \bigl\llVert \bigl(\sigma
\bigl(u^{0}(s, \cdot ) + \sqrt{ \varepsilon }h(\varepsilon )
\bar{u}^{\varepsilon ,v^{\varepsilon }}(s, \cdot )\bigr) - \sigma \bigl(u^{0}(s,
\cdot )\bigr) \bigr)v^{\varepsilon }(s, \cdot ) \bigr\rrVert _{1}ds
\nonumber
\\
&\quad  \leqslant  c \sqrt{\varepsilon }h(\varepsilon ) \int_{0}^{t}(t-s)^{-
\frac{3}{4}}
\bigl\llVert \bar{u}^{\varepsilon ,v^{\varepsilon }}(s, \cdot ) \bigr\rrVert _{2}
\bigl\llVert v^{\varepsilon }(s, \cdot ) \bigr\rrVert _{2}ds.
\end{align*}
Since $(v^{\varepsilon })\subset \mathcal{P}_{2}^{N}$,
%and taking into account\index{account}
the estimation (\ref{90}) implies that there exists a constant $c$ depending
on $N$ such that for all $ 0 < \varepsilon \leqslant \varepsilon _{0}$
\begin{align*}
\sup_{t\in [0,T]} \bigl\llVert J_{4, 1}^{\varepsilon }(t,
\cdot ) \bigr\rrVert _{2}  \leqslant  c \sqrt{\varepsilon }h(
\varepsilon ), \quad \text{a.s.}
\end{align*}
Therefore, $J_{4, 1}^{\varepsilon }$ converges to $0$ in $C([0,T]; L
^{2}[0,1])$ as $\varepsilon $ goes to $0$.

The proof of the convergence\index{convergence} of $J_{4, 2}^{\varepsilon }$ to $0$ in
$C([0,T]; L^{2}[0,1])$ as $\varepsilon $ goes to $0$ will be omitted
since it can be treated similarly to the case of the family
$\{K_{n},  n\geqslant 1\}$ defined below by (\ref{83}).

Consequently, $I_{4}^{\varepsilon , v^{\varepsilon }}$ converges to
$\tilde{I}_{4}^{0,v}$ in $C([0,T]; L^{2}([0,1]))$, and by the uniqueness
of the limit and the continuity\index{continuity} of $\tilde{I}_{4}^{0,v}$, we conclude
that ${I}_{4}^{0,v} = \tilde{I}_{4}^{0,v}$.

Thus, by the convergence\index{convergence} of both the process $  (\bar{u}^{\varepsilon
,v^{\varepsilon }}  )_{\varepsilon }$ and each term $I_{i}^{
\varepsilon , ,v^{\varepsilon }}$\break  for $i=1, 2, 3, 4 $ along a
subsequence, and
%taking into account\index{account}
by the uniqueness of the solution\break  of
the equation (\ref{17}), we conclude that the condition \textbf{(A1)}
in Proposition \ref{48}\break  holds.
%\end{proof}
\medskip

Now, let us prove the condition \textbf{(A2)}. As it was mentioned
before, it suffices to check the continuity\index{continuity} of the map $\mathcal{G}
^{0}: \mathcal{E}_{0}\times S^{N} \longrightarrow C([0,T];
L^{2}([0,1]))$ with respect to the weak topology.\index{weak topology} Let $v$, $(v_{n})
\subset S^{N}$ such that for any $g\in \mathcal{H}_{T}$,
\begin{equation*}
\lim_{n\longrightarrow +\infty } \langle v - v_{n} , g \rangle
_{\mathcal{H}_{T}} =0.
\end{equation*}
We claim that
%
%e31 #&#
\begin{equation}
\label{claim1} \lim_{n \longrightarrow +\infty }\sup_{t\in [0,T]}
\bigl\llVert u^{v_{n}}(t) - u ^{v}(t) \bigr\rrVert
_{2}=0.
\end{equation}

Let $(t,x)\in [0,T]\times [0,1]$. The equation (\ref{17}) implies
%
%e32 #&#
\begin{align}
\bar{u}^{v_{n}}(t,x) - \bar{u}^{v}(t,x) & =  - 2\int
_{0}^{t}\int_{0}
^{1} \partial _{y} G_{t-s}(x,y)u^{0}(s,
y) \bigl(\bar{u}^{v_{n}}(s,y) - \bar{u}^{v}(s,y) \bigr)dyds
\nonumber
\\
&\quad{}+ \int_{0}^{t}\int_{{0}}^{1}G_{t-s}(x,y)
\sigma \bigl(u^{0}(s,y)\bigr) \bigl(v_{n}(s,y) -v(s,y)
\bigr)dyds.
\end{align}
Hence,
%
%e33 #&#
\begin{align}\label{64}
&\bigl\llVert \bar{u}^{v_{n}}(t, \cdot ) -
\bar{u}^{v}(t, \cdot ) \bigr\rrVert _{2}\nonumber\\
&\quad  \leqslant c
\Biggl\{ \Biggl\llVert \int_{0}^{t}\int
_{0}^{1} \partial _{y}
G_{t-s}( \cdot ,y)u^{0}(s, y) \bigl(\bar{u}^{v_{n}}(s,y)
- \bar{u}^{v}(s,y) \bigr)dyds \Biggr\rrVert _{2}
\nonumber\\
&\qquad{}+ \Biggl\llVert \int_{0}^{t}\int
_{{0}}^{1}G_{t-s}(\cdot ,y) \sigma
\bigl(u^{0}(s,y)\bigr) \bigl(v_{n}(s,y) -v(s,y)
\bigr)dyds \Biggr\rrVert _{2}\Biggr\}.
\end{align}

On one hand, using Lemma \ref{gyongylemma}, the Cauchy--Schwarz inequality
and estimation (\ref{6bis}) we get
%
%e34 #&#
\begin{align}
\label{66}
& \Biggl\llVert \int_{0}^{t}
\int_{0}^{1} \partial _{y}
G_{t-s}(\cdot ,y)u ^{0}(s, y) \bigl(\bar{u}^{v_{n}}(s,y)
- \bar{u}^{v}(s,y) \bigr)dyds \Biggr\rrVert _{2}
\nonumber
\\
&\quad  \leqslant c \int_{0}^{t}(t-s)^{-3/4}
\bigl\llVert u^{0}(s,\cdot ) \bigl(\bar{u} ^{v_{n}}(s,\cdot
) - \bar{u}^{v}(s,\cdot ) \bigr) \bigr\rrVert _{1}ds
\nonumber
\\
&\quad  \leqslant c \int_{0}^{t}(t-s)^{-3/4}
\bigl\llVert u^{0}(s,\cdot ) \bigr\rrVert _{2} \bigl
\llVert \bar{u}^{v_{n}}(s,\cdot ) - \bar{u}^{v}(s,\cdot )
\bigr\rrVert _{2}ds
\nonumber
\\
&\quad  \leqslant c \int_{0}^{t}(t-s)^{-3/4}
\bigl\llVert u^{0}(s,\cdot ) \bigr\rrVert _{2} \bigl
\llVert \bar{u}^{v_{n}}(s,\cdot ) - \bar{u}^{v}(s,\cdot )
\bigr\rrVert _{2}ds
\nonumber
\\
&\quad  \leqslant c \int_{0}^{t}(t-s)^{-3/4}
\sup_{s\in [0,T]} \bigl\llVert u^{0}(s, \cdot ) \bigr
\rrVert _{2} \bigl\llVert \bar{u}^{v_{n}}(s,\cdot ) -
\bar{u}^{v}(s,\cdot ) \bigr\rrVert _{2}ds
\nonumber
\\
&\quad  \leqslant c \int_{0}^{t}(t-s)^{-3/4}
\bigl\llVert \bar{u}^{v_{n}}(s,\cdot ) - \bar{u}^{v}(s,
\cdot ) \bigr\rrVert _{2}ds.
\end{align}

On the other hand, in order to handle the second term in the right hand
side %term
of (\ref{64}), we define, for any
$(t,x)\in [0,T]\times [0,1]$, the sequence
%
%e35 #&#
\begin{eqnarray}
\label{83} K_{n}(t,x)  := \int_{0}^{t}
\int_{{0}}^{1}G_{t-s}(x,y)\sigma
\bigl(u^{0}(s,y)\bigr) \bigl(v _{n}(s,y) -v(s,y)
\bigr)dyds,
\end{eqnarray}
whose properties are given in Lemma \ref{80} in the Appendix. Then,
%making together
summing up (\ref{64})--(\ref{66}), we obtain for any $0\leqslant
t \leqslant T$
%
%e36 #&#
\begin{align}
\bigl\llVert \bar{u}^{v_{n}}(t, \cdot ) - \bar{u}^{v}(t,
\cdot ) \bigr\rrVert _{2} \leqslant c \bigl\llVert
K_{n}(t) \bigr\rrVert _{2} + c \int
_{0}^{t}(t-s)^{-3/4} \bigl\llVert
\bar{u}^{v_{n}}(s, \cdot ) - \bar{u}^{v}(s,\cdot ) \bigr
\rrVert _{2}ds.
\end{align}
Applying Gronwall's lemma, we get the estimate
%
%e37 #&#
\begin{equation}
\label{67} \sup_{t\in [0,T]} \bigl\llVert
\bar{u}^{v_{n}}(t, \cdot ) - \bar{u}^{v}(t, \cdot ) \bigr
\rrVert _{2} \leqslant c\sup_{t\in [0,T]} \bigl\llVert
K_{n}(t, \cdot ) \bigr\rrVert _{2},
\end{equation}
which implies together with (\ref{82}) the claim (\ref{claim1}), and
henceforth the condition \textbf{(A2)} holds.

Finally, the proof of Theorem \ref{main} is completed since conditions
of Proposition \ref{48} are fulfilled.
\end{proof}

%s4 #&#
\section{Toward a central limit\index{central limit theorem} theorem}\label{sec4}

Many results on central limit theorem\index{central limit theorem} has been recently established for
various kinds of parabolic SPDEs\index{parabolic SPDEs} under strong assumptions on the drift
coefficient. More specifically, under the linear growth condition, the
differentiability and the global Liptschitz condition on both the drift
coefficient and its derivative, some central limit theorems\index{central limit theorem} have been
established in \cite{wang2015moderate,yang2016moderate}. And while these conditions are not all
fulfilled for the stochastic Burgers equation,\index{stochastic Burgers equation} it is not surprising that
classical tools do not apply to establish a central limit\index{central limit theorem} theorem.
Nevertheless, we will prove in this section two first-step results
toward a central limit\index{central limit theorem} theorem. More specifically, the uniform
boundedness and the convergence\index{convergence} of $u^{\varepsilon }$ to $u^{0}$ in
$\mathrm{L}^{q}(\varOmega ; C([0,T]; \mathrm{L}^{2}([0,1])))$ for
$q\geqslant 2$. We hope that our current estimates could be helpful for
future works in this direction.

We begin with the following result.
%
%p4.1 #&#
\begin{proposition}
\label{53}
Assume that $\sigma $ is bounded and globally Lipschitz.\index{globally Lipschitz} Then for all
$q\geqslant 2$, we have
%
%e38 #&#
\begin{equation}
\label{34} \sup_{\varepsilon \in ]0,1]}\mathbf{E} \Bigl(\sup
_{t\in [0,T]} \bigl\llVert u^{
\varepsilon }(t,\cdot ) \bigr\rrVert
_{2}^{q} \Bigr) < \infty .
\end{equation}
\end{proposition}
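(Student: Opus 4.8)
The plan is to start from the mild formulation (\ref{21}) and split $u^\varepsilon(t,\cdot)$ into its three constituents: the initial-data term $A^\varepsilon(t,x)=\int_0^1 G_t(x,y)u_0(y)\,dy$, the quadratic drift $B^\varepsilon(t,x)=-\int_0^t\int_0^1\partial_y G_{t-s}(x,y)(u^\varepsilon(s,y))^2\,dy\,ds$, and the stochastic convolution $C^\varepsilon(t,x)=\sqrt\varepsilon\int_0^t\int_0^1 G_{t-s}(x,y)\sigma(u^\varepsilon(s,y))\,W(ds,dy)$, and to estimate $\mathbf E\bigl(\sup_{t\le T}\|\cdot(t,\cdot)\|_2^q\bigr)$ for each piece. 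Since we only need uniformity over $\varepsilon\in\,]0,1]$ and $\sigma$ is bounded, the two linear pieces are benign: the factor $\sqrt\varepsilon\le1$ and $|\sigma|\le\|\sigma\|_\infty$ keep every $\varepsilon$-dependent constant uniformly finite.

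First I would dispose of $A^\varepsilon$. As $G\ge 0$ and $\int_0^1 G_t(x,y)\,dx=1$ by Lemma \ref{20}(i), the Green kernel is sub-Markov, so $\|A^\varepsilon(t,\cdot)\|_2\le\|A^\varepsilon(t,\cdot)\|_\infty\le\|u_0\|_\infty$; since $u_0$ is continuous on $[0,1]$ this is a finite deterministic constant, uniform in $t$ and $\varepsilon$. For $C^\varepsilon$ I would combine the Burkholder--Davis--Gundy{} inequality with a factorization argument to raise the stochastic convolution to the $q$-th power and to handle the supremum in $t$, together with the boundedness of $\sigma$ and the estimate $\int_0^t\int_0^1 G_{t-s}^2(x,y)\,dx\,ds\le c$ of Lemma \ref{20}(iii); as $\varepsilon\le1$, this gives $\sup_{\varepsilon\in\,]0,1]}\mathbf E\bigl(\sup_{t\le T}\|C^\varepsilon(t,\cdot)\|_2^q\bigr)<\infty$ with no trace difficulty, because $G$ is square-integrable in one space dimension.

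The genuine difficulty is the quadratic term $B^\varepsilon$, which I expect to be the crux. The factorization Lemma \ref{gyongylemma} (with $\rho=2$, $\lambda=1$, exactly as for $I_2^{\varepsilon,v^\varepsilon}$ in Section \ref{sec3}) yields only $\|B^\varepsilon(t,\cdot)\|_2\le c\int_0^t(t-s)^{-3/4}\|u^\varepsilon(s,\cdot)\|_2^2\,ds$, which is quadratic in the norm being estimated; consequently no Gronwall argument resting on the mild formulation alone can be closed on $[0,T]$, since the $q$-th moment gets controlled only by the $2q$-th moment. The way out is the cancellation special to Burgers' nonlinearity, $\int_0^1 u\,\partial_x(u^2)\,dx=\tfrac{1}{3}[u^3]_0^1=0$ under the Dirichlet conditions, which prevents the drift from feeding the $L^2$-energy; this is precisely the mechanism by which the $L^p$ bounds are obtained in the truncation/approximation scheme behind Theorem 2.1 in \cite{gyongy1998existence}. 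I would therefore invoke that construction to secure, for each fixed $\varepsilon$, a finite bound for $\mathbf E\bigl(\sup_{t\le T}\|u^\varepsilon(t,\cdot)\|_2^q\bigr)$, and then verify the point proper to the present statement, namely \emph{uniformity in} $\varepsilon\in\,]0,1]$: on inspecting the constants, every occurrence of the noise carries a factor $\sqrt\varepsilon\le1$ and every occurrence of $\sigma$ is dominated by $\|\sigma\|_\infty$, so the bound does not deteriorate as $\varepsilon$ ranges over $]0,1]$. A final passage to the limit by Fatou's lemma along the approximants, together with the martingale maximal inequality for the supremum in $t$, then completes (\ref{34}).
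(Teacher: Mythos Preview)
Your decomposition correctly isolates the difficulty in the quadratic drift, and you are right that the Burgers cancellation $\int_0^1 u\,\partial_x(u^2)\,dx=0$ is what ultimately tames it. But the last paragraph contains a genuine gap: ``invoke that construction'' and ``verify uniformity by inspecting constants'' skips the heart of the argument, and the tools you list (BDG, factorization, martingale maximal inequality, Fatou) are not enough to close it.

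The paper does not estimate $A^\varepsilon$, $B^\varepsilon$, $C^\varepsilon$ separately in $L^q(\Omega)$. Instead it sets $\eta_\varepsilon:=C^\varepsilon$ and $\vartheta^\varepsilon:=u^\varepsilon-\eta_\varepsilon$, so that $\vartheta^\varepsilon$ solves the \emph{random PDE} $\partial_t\vartheta^\varepsilon=\Delta\vartheta^\varepsilon+\partial_x(\vartheta^\varepsilon+\eta_\varepsilon)^2$ pathwise. The Burgers cancellation is applied at the level of this equation, but the cross terms $\int_0^1\vartheta^\varepsilon\,\partial_x(\vartheta^\varepsilon\eta_\varepsilon)\,dx$ do not vanish; they are controlled only by $\bar\eta_\varepsilon\,\|\partial_x\vartheta^\varepsilon\|_2\|\vartheta^\varepsilon\|_2$, where $\bar\eta_\varepsilon:=\sup_{t,x}|\eta_\varepsilon(t,x)|$. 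After Young's inequality and Gronwall this yields the \emph{pathwise} bound
\[
\sup_{t\le T}\|\vartheta^\varepsilon(t,\cdot)\|_2^2\;\le\;\|u_0\|_2^2+cT\bigl(1+\bar\eta_\varepsilon^{\,4}\bigr)\,e^{cT(1+\bar\eta_\varepsilon^{\,2})},
\]
with an unavoidable \emph{exponential} of $\bar\eta_\varepsilon^{\,2}$. Consequently the polynomial moments of $\eta_\varepsilon$ that your BDG/factorization step delivers are insufficient to take expectations on the right-hand side. One needs the Gaussian tail estimate of Lemma~\ref{56} (Chenal--Millet), namely $P(\bar\eta_\varepsilon\ge M)\le C_1\exp\bigl(-M^2/(\varepsilon C_2(1+T^{1/8}))\bigr)$, to show that $\mathbf E\bigl[(1+\bar\eta_\varepsilon^{\,2q})e^{cT(1+\bar\eta_\varepsilon^{\,2})}\bigr]$ is finite uniformly in $\varepsilon\in\,]0,1]$; and even this holds only when $cTC_2(1+T^{1/8})<1$, so a time-iteration is required to reach arbitrary $T$. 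Neither the exponential-integrability step nor the iteration is present in your outline, and Gy\"ongy's Theorem~2.1 by itself gives only boundedness in probability (cf.\ (\ref{130})), not the $q$-th moment bound you claim to extract from it.
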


\begin{proof}
We will use similar arguments as in Cardon-Weber and
Millet \cite{cardon2001support} and Gy\"{o}ngy
\cite{gyongy1998existence}. For $0 < \varepsilon \leqslant 1$, set
\begin{equation*}
\eta _{\varepsilon }(t,x): = \sqrt{\varepsilon } \int_{0}^{t}
\int_{0}^{1}G_{t-s}(x,y)\sigma
\bigl(u^{\varepsilon }(s,y)\bigr)W(dy,ds),
\end{equation*}
and
\begin{align*}
\vartheta ^{\varepsilon }(t,x) & :=  u^{\varepsilon }(t,x) - \eta
_{\varepsilon }(t,x)
\\
& =  \int_{0}^{1}G_{t}(x,y)u_{0}(y)dy
\,{-}\, \int_{0}^{t}\!\int_{0}^{1}
\partial _{y} G_{t-s}(x,y) \bigl(u^{\varepsilon }(s,y)
\bigr)^{2}dyds
\\
& =  \int_{0}^{1}G_{t}(x,y)u_{0}(y)dy
\,{-}\, \int_{0}^{t}\!\int_{0}^{1}
\partial _{y} G_{t-s}(x,y) \bigl(\vartheta
^{\varepsilon }(s,y) + \eta _{\varepsilon }(s,y)\bigr)^{2}dyds.
\end{align*}
Then, $\vartheta ^{\varepsilon }$ is a solution of the %following
equation
%
%e39 #&#
\begin{equation}
\label{58} \dfrac{\partial \vartheta ^{\varepsilon }}{\partial t}(t,x)= \Delta \vartheta
^{\varepsilon }(t,x) + \dfrac{\partial }{\partial x} \bigl(\vartheta ^{\varepsilon }(t,x)
+ \eta _{\varepsilon }(t,x) \bigr)^{2}, \quad  (t,x)\in [0,T]\times
[0, 1],
\end{equation}
with Dirichlet\index{Dirichlet boundary conditions}'s boundary conditions and initial condition $
\vartheta ^{\varepsilon }(0,x)=u_{0}(x)$.

Since $\sigma \circ u^{\varepsilon }$ is bounded uniformly in
$\varepsilon $, arguing as in the proof of Theorem $2.1$ in
{\cite{gyongy1998existence}, page 286}, by
the Garsia--Rodemich--Rumsey lemma, one can deduce that
\begin{eqnarray*}
\sup_{\varepsilon }\mathbf{E} \Bigl(\sup_{0 \leqslant t \leqslant T}
\sup_{0
\leqslant x \leqslant 1} \bigl\llvert \tilde{\eta }_{\varepsilon }(t,x)
\bigr\rrvert ^{q} \Bigr) <\infty ,
\end{eqnarray*}
where $\tilde{\eta }_{\varepsilon }(t,x):=\frac{1}{\sqrt{\varepsilon
}}\eta _{\varepsilon }(t,x)$. Consequently, there exists a universal
constant $C(q)$ depending only on $q$ such that
%
%e40 #&#
\begin{eqnarray}
\label{ineq:BDG} \mathbf{E} \Bigl(\sup_{0 \leqslant t \leqslant T} \sup
_{0
\leqslant x \leqslant 1} \bigl\llvert \eta _{\varepsilon }(t,x) \bigr\rrvert
^{q} \Bigr)  \leqslant  C(p) \varepsilon ^{q/2}.
\end{eqnarray}
In particular, the random variable $ \bar{\eta }_{\varepsilon } : =
\sup_{0 \leqslant t \leqslant T}\sup_{0 \leqslant x \leqslant 1} \llvert \eta
_{\varepsilon }(t,x) \rrvert $ is well defined a.s.

Moreover, using the SPDE (\ref{58}) satisfied by $\vartheta ^{\varepsilon
}$ and following the same arguments as in the proof of Theorem 2.1 in
\cite{gyongy1998existence}, we deduce the existence of a constant
$c$ independent of $\varepsilon $ and $\omega $ (see
\cite{gyongy1998existence} pages 286--289) such that
%
%e41 #&#
\begin{eqnarray}
\label{57} \sup_{0 \leqslant t \leqslant T} \bigl\llVert \vartheta
^{\varepsilon }\bigl(t,^{
\cdot }\bigr) \bigr\rrVert _{2}^{2}
\leqslant \llVert u_{0} \rrVert _{2}^{2} + cT
\bigl(1 + \bar{ \eta }_{\varepsilon }^{4} \bigr)e^{  (cT(1+
\bar{\eta }_{\varepsilon
}^{2})  )}.
\end{eqnarray}
Consequently, for any $q \geqslant 2$
\begin{align*}
\sup_{0 \leqslant t \leqslant T} \bigl\llVert u^{\varepsilon }(t, \cdot ) \bigr
\rrVert _{2}^{q} & =  \sup_{0 \leqslant t \leqslant T}
\bigl\llVert \vartheta ^{\varepsilon
}(t, \cdot ) + \eta _{\varepsilon }(t,
\cdot ) \bigr\rrVert _{2}^{q}
\\
& \leqslant  2^{q-1} \Bigl(\sup_{0 \leqslant t \leqslant T} \bigl
\llVert \vartheta ^{\varepsilon }(t, \cdot ) \bigr\rrVert _{2}^{q}
+ \sup_{0 \leqslant t \leqslant T} \bigl\llVert \eta _{\varepsilon }(t, \cdot )
\bigr\rrVert _{2}^{q} \Bigr)
\\
& \leqslant 2^{q-1} \Biggl( \llVert u_{0} \rrVert
_{2}^{q} + cT \bigl(1 + \bar{ \eta }_{\varepsilon }^{2q}
\bigr)e^{  (cT(1+
\bar{\eta }_{\varepsilon
}^{2})  )}\\
&\quad + \sup_{0 \leqslant t \leqslant T} \Biggl(\int
_{0}^{1} \bigl\llvert \eta _{\varepsilon }(t,
x) \bigr\rrvert ^{2}dx \Biggr)^{q/2} \Biggr)
\\
& \leqslant  2^{q-1} \bigl( \llVert u_{0} \rrVert
_{2}^{q} + cT \bigl(1 + \bar{ \eta }_{\varepsilon }^{2q}
\bigr)e^{  (cT(1+
\bar{\eta }_{\varepsilon
}^{2})  )} + \bar{\eta }_{\varepsilon }^{q} \bigr)
\\
& \leqslant  c \bigl( \llVert u_{0} \rrVert _{2}^{q}
+ cT \bigl(1 + \bar{\eta } _{\varepsilon }^{2q}
\bigr)e^{  (cT(1+
\bar{\eta }_{\varepsilon
}^{2})  )} \bigr).
\end{align*}

Hence, to prove (\ref{34}) it suffices to show that
%
%e42 #&#
\begin{equation}
\label{62} \sup_{\varepsilon \in ]0,1]} \mathbf{E} \bigl( \bigl(1 +
\bar{\eta } _{\varepsilon }^{2q} \bigr)e^{cT(1+
\bar{\eta }_{\varepsilon }^{2})} \bigr) \quad  \mbox{ is finite.}
\end{equation}

For this purpose, note first that
\begin{equation*}
\sup_{0\leqslant s \leqslant T}\sup_{0\leqslant x \leqslant 1} \bigl\llvert
\sqrt{ \varepsilon }\sigma \bigl(u^{\varepsilon }(s,x)\bigr) \bigr\rrvert
\leqslant \sqrt{\varepsilon } \llVert \sigma \rrVert _{\infty }, \quad
\mbox{where }  \llVert \sigma \rrVert _{\infty }:= \sup
_{x\in \mathbb{R}} \bigl\llvert \sigma (x) \bigr\rrvert .
\end{equation*}
Thus, by Lemma \ref{56}, there exist two positive constants
$C_{1}$ and $C_{2}$, independent of $\varepsilon $, such that for any
$M\geqslant C_{1}\|\sigma \|_{\infty }$
%
%e43 #&#
\begin{equation}
P (\bar{\eta }_{\varepsilon } \geqslant M ) \leqslant C _{1}
\llVert \sigma \rrVert _{\infty }\exp \biggl(-\frac{M^{2}}{\varepsilon C_{2}
  (1+T^{\frac{1}{8}}  )} \biggr).
\end{equation}
Setting $\varphi (x): = (1 + x^{2q})e^{cT  (1+x^{2}  )}$, which
is a positive, continuous and increasing function on $[0, +\infty [$,
we get for any $A\geqslant C_{1}\|\sigma \|_{\infty }$
%
%e44 #&#
\begin{align}
\mathbf{E}\bigl(\varphi (\bar{\eta }_{\varepsilon })\bigr) &= \int
_{0}^{+
\infty }P \bigl(\varphi (\bar{\eta
}_{\varepsilon }) > x \bigr)dx
\nonumber
\\
&= \int_{0}^{A}P (\bar{\eta }_{\varepsilon }
> x )\varphi '(x)dx + \int_{A}^{+\infty }P
(\bar{\eta }_{\varepsilon } > x ) \varphi '(x)dx
\nonumber
\\
& \leqslant  \varphi (A) + cC_{1} \llVert \sigma \rrVert
_{\infty }\int_{A}^{+
\infty } \bigl(1 +
x^{2q+1} \bigr)\exp \biggl(cT x^{2}-\frac{x^{2}}{
\varepsilon C_{2}  (1+T^{\frac{1}{8}}  )}
\biggr) dx
\nonumber
\\
& \leqslant  \varphi (A) + cC_{1} \llVert \sigma \rrVert
_{\infty }\int_{A}^{+
\infty } \bigl(1 +
x^{2q+1} \bigr)\exp \biggl(cT x^{2}-\frac{x^{2}}{C
_{2}  (1+T^{\frac{1}{8}}  )}
\biggr)dx,
\nonumber
\end{align}
where the last integral is finite provided that $cTC_{2}  (1+T
^{\frac{1}{8}}  ) < 1$. This implies that there exists $T_{0} > 0$,
independent of $u_{0}$ and $\varepsilon $, such that (\ref{62}) holds
for $0 < T \leqslant T_{0}$. Using (\ref{57}), and %by
iterating the
procedure finitely many times we conclude the proof.
\end{proof}

Now, we can announce and state the following proposition.
%
%p4.2 #&#
\begin{proposition}
\label{68}
Assume that $\sigma $ is bounded and globally Lipschitz.\index{globally Lipschitz} Then, for all
$q\geqslant 2$, we have
%
%e44 #&#
\begin{equation}
\label{33} \lim_{\varepsilon \longrightarrow 0}\mathbf{E} \Bigl(\sup
_{t\in [0,T]} \bigl\llVert u^{\varepsilon }(t,\cdot ) -
u^{0}(t,\cdot ) \bigr\rrVert _{2}^{q}
\Bigr)=\xch{0.}{0,}
\end{equation}
\end{proposition}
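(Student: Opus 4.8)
The plan is to subtract the mild formulation (\ref{4}) of $u^0$ from that (\ref{3}) of $u^\varepsilon$ and run a \emph{pathwise} singular Gronwall argument, keeping the stochastic contribution isolated so that the troublesome quadratic term enters only through an a priori moment bound. Writing $w^\varepsilon := u^\varepsilon - u^0$ and recalling $\eta_\varepsilon(t,x) = \sqrt\varepsilon\int_0^t\int_0^1 G_{t-s}(x,y)\sigma(u^\varepsilon(s,y))W(dy,ds)$ from the proof of Proposition \ref{53}, the two equations give
\begin{equation*}
w^\varepsilon(t,x) = -\int_0^t\int_0^1 \partial_y G_{t-s}(x,y)\,w^\varepsilon(s,y)\bigl(u^\varepsilon(s,y)+u^0(s,y)\bigr)\,dyds + \eta_\varepsilon(t,x),
\end{equation*}
since $(u^\varepsilon)^2-(u^0)^2 = w^\varepsilon\,(u^\varepsilon+u^0)$. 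Taking the $L^2([0,1])$ norm, applying Lemma \ref{gyongylemma} with $\rho=2$, $\lambda=1$ to the first term together with the Cauchy--Schwarz inequality, and using $\|\eta_\varepsilon(t,\cdot)\|_2\le\bar\eta_\varepsilon$, I obtain, with $N_\varepsilon := \sup_{0\le s\le T}(\|u^\varepsilon(s,\cdot)\|_2 + \|u^0(s,\cdot)\|_2)$ and $\bar\eta_\varepsilon := \sup_{t}\sup_{x}|\eta_\varepsilon(t,x)|$,
\begin{equation*}
\bigl\|w^\varepsilon(t,\cdot)\bigr\|_2 \le c\,N_\varepsilon\int_0^t (t-s)^{-3/4}\bigl\|w^\varepsilon(s,\cdot)\bigr\|_2\,ds + \bar\eta_\varepsilon .
\end{equation*}

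Since $u^\varepsilon, u^0 \in C([0,T];L^2([0,1]))$ a.s., for a.e. $\omega$ the map $t\mapsto\|w^\varepsilon(t,\cdot)\|_2$ is bounded and $N_\varepsilon,\bar\eta_\varepsilon<\infty$; hence the singular Gronwall inequality (with integrable singularity of order $3/4$, i.e. of Mittag--Leffler type, exponent $\beta=1/4$) applies pathwise and yields
\begin{equation*}
\sup_{0\le t\le T}\bigl\|w^\varepsilon(t,\cdot)\bigr\|_2 \le \bar\eta_\varepsilon\, g(N_\varepsilon),
\end{equation*}
where $g$ is a fixed continuous increasing function depending only on $T$ and $c$. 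The decisive point—and the main obstacle the splitting is designed to bypass—is that the quadratic nonlinearity has now been confined to the factor $g(N_\varepsilon)$, which I will only control \emph{in probability}: a direct $L^q$ estimate would demand uncontrollable super-exponential moments of $N_\varepsilon$ (since, via (\ref{57}), $N_\varepsilon$ itself has only Gaussian-type exponential moments), whereas convergence in probability requires merely tightness of $g(N_\varepsilon)$.

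Next I would show $\sup_{t}\|w^\varepsilon(t,\cdot)\|_2 \to 0$ in probability. By (\ref{ineq:BDG}) one has $\mathbf{E}(\bar\eta_\varepsilon^q)\le C(q)\varepsilon^{q/2}\to 0$, so $\bar\eta_\varepsilon\to 0$ in probability. By Proposition \ref{53} and (\ref{6bis}), $\sup_\varepsilon\mathbf{E}(N_\varepsilon^q)<\infty$, so $\{N_\varepsilon\}_\varepsilon$ is bounded in probability and, $g$ being continuous, so is $\{g(N_\varepsilon)\}_\varepsilon$. A product of a family tending to $0$ in probability with a family bounded in probability tends to $0$ in probability, which gives the claim after a routine two-term estimate $\mathbf{P}(\bar\eta_\varepsilon g(N_\varepsilon)>\kappa)\le \mathbf{P}(g(N_\varepsilon)>K)+\mathbf{P}(\bar\eta_\varepsilon>\kappa/K)$.

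Finally, uniform integrability upgrades this to convergence of the $q$-th moment. From the crude bound $\|w^\varepsilon(t,\cdot)\|_2\le\|u^\varepsilon(t,\cdot)\|_2+\|u^0(t,\cdot)\|_2$ together with Proposition \ref{53} and (\ref{6bis}) applied with exponent $2q$, the family $\bigl(\sup_t\|w^\varepsilon(t,\cdot)\|_2^q\bigr)_\varepsilon$ is bounded in $L^2(\varOmega)$, hence uniformly integrable. Combined with $\sup_t\|w^\varepsilon(t,\cdot)\|_2^q\to 0$ in probability, this yields convergence in $L^1(\varOmega)$, which is precisely (\ref{33}).
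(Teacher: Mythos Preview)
Your proof is correct and the ingredients are the same as in the paper---the decomposition $w^\varepsilon = I^\varepsilon + \eta_\varepsilon$, Lemma~\ref{gyongylemma} with $\rho=2$, $\lambda=1$ for the quadratic term, the bound~(\ref{ineq:BDG}) on $\bar\eta_\varepsilon$, and Proposition~\ref{53} applied with exponent $2q$---but the assembly differs. The paper localizes on the events $\varOmega_\varepsilon^M(t)=\{\sup_{s\le t}\|u^\varepsilon(s)\|_2\vee\sup_{s\le t}\|u^0(s)\|_2\le M\}$, runs the Gronwall argument \emph{in expectation} on $\varOmega_\varepsilon^M$ (where the random coefficient is replaced by the deterministic bound $2M$), obtains $\mathbf{E}(\mathbf{1}_{\varOmega_\varepsilon^M}\sup_t\|w^\varepsilon\|_2^q)\le c\,\varepsilon^{q/2}e^{2cM^q}$, handles the complement by Cauchy--Schwarz and Markov's inequality, and finishes by letting $\varepsilon\to 0$ and then $M\to\infty$. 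You instead keep the random coefficient $N_\varepsilon$ and apply the singular Gronwall inequality \emph{pathwise}, reducing matters to a product $\bar\eta_\varepsilon\,g(N_\varepsilon)$; convergence in probability is then immediate and the uniform-integrability step (via $L^2(\varOmega)$ boundedness) closes the argument. Your route is a little more conceptual and avoids the explicit two-parameter limit, while the paper's localization yields an explicit intermediate quantitative bound $c\varepsilon^{q/2}e^{2cM^q}+cM^{-q/2}$; in substance the two arguments are equivalent reorganizations of the same estimates.
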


\begin{proof} We will use a localization argument. For $0\leqslant
t \leqslant T$, $\varepsilon \in ]0,1]$ and $M >0$, set
%
%e45 #&#
\begin{equation}
\label{39} \varOmega _{\varepsilon }^{M}(t) := \Bigl\{ w\in
\varOmega : \sup_{s\in [0,t]} \bigl\llVert u ^{\varepsilon }(s)
\bigr\rrVert _{2} \vee \sup_{s\in [0,t]} \bigl\llVert
u^{0}(s) \bigr\rrVert _{2} \leqslant M \Bigr\}.
\end{equation}
We have
%
%e46 #&#
\begin{align}
u^{\varepsilon }(t, x) - u^{0}(t, x) & =  \sqrt{\varepsilon }\int
_{0}^{t}\int_{0}^{1}G_{t-s}(x,y)
\sigma \bigl(u^{\varepsilon }(s,y)\bigr)W(ds,dy)
\nonumber
\\
& \quad{}- \int_{0}^{t}\int_{0}^{1}
\partial _{y} G_{t-s}(x,y) \bigl(\bigl(u^{
\varepsilon }(s,y)
\bigr)^{2} - \bigl(u^{0}(s,y)\bigr)^{2}
\bigr)dyds
\nonumber
\\
& :=  \eta ^{\varepsilon }(t,x) + I^{\varepsilon }(t,x).
\end{align}
Then, for any $q\geqslant 2$,
%
%e47 #&#
\begin{eqnarray}
\bigl\llVert u^{\varepsilon }(t, \cdot ) - u^{0}(t, \cdot ) \bigr
\rrVert _{2}^{q} \leqslant 2^{q-1} \bigl( \bigl
\llVert \eta ^{\varepsilon }(t,\cdot ) \bigr\rrVert ^{q} + \bigl
\llVert I^{\varepsilon
}(t,\cdot ) \bigr\rrVert ^{q} \bigr).
\label{29}
\end{eqnarray}
For $\eta ^{\varepsilon }(t,\cdot )$, by the H\"{o}lder inequality we have
\begin{align*}
\mathbf{E} \Bigl(\sup_{0\leqslant s \leqslant t} \bigl\llVert \eta
^{\varepsilon }(s, \cdot ) \bigr\rrVert ^{q} \Bigr) &\leqslant
\mathbf{E} \Biggl(\sup_{0\leqslant s
\leqslant t}\int_{0}^{1}
\bigl\llvert \eta ^{\varepsilon }(s,x) \bigr\rrvert ^{q}dx \Biggr)
\\
& \leqslant  \int_{0}^{1}\mathbf{E} \Bigl(
\sup_{0\leqslant s \leqslant
t} \bigl\llvert \eta ^{\varepsilon }(s,x) \bigr
\rrvert ^{q} \Bigr)dx
\\
&\leqslant  \mathbf{E} \Bigl(\sup_{0\leqslant x\leqslant 1} \sup
_{0\leqslant s \leqslant t} \bigl\llvert \eta ^{\varepsilon }(s,x) \bigr\rrvert
^{q} \Bigr)
\\
&\leqslant  C(q)\varepsilon ^{q/2},
\end{align*}
where the last inequality follows from (\ref{ineq:BDG}).

For $I^{\varepsilon }(t,\cdot )$, according to Lemma
\ref{gyongylemma} in the Appendix with $\rho = 2$ and $\lambda =1$, we
have
%
%e48 #&#
\begin{align}
\bigl\llVert I^{\varepsilon }(t,\cdot ) \bigr\rrVert _{2}
\leqslant  c\int_{0}^{t}(t-s)^{-
\frac{3}{4}}
\bigl\llVert \bigl(u^{\varepsilon }(s,\cdot ) - u^{0}(s,\cdot )\bigr)
\bigl(u^{
\varepsilon }(s,\cdot ) + u^{0}(s,\cdot )\bigr) \bigr
\rrVert _{1}\xch{ds,}{ds.} \label{32}
\end{align}
and using the following form of H\"{o}lder's inequality
\begin{align*}
 \Biggl\llvert \int_{0}^{t}f(s)g(s)ds\Biggr \rrvert ^{q} \leqslant   \Biggl(\int_{0}
^{t}\bigl|f(s)\bigr|ds  \Biggr)^{q-1}\int_{0}^{t}\bigl|f(s)\bigr|\bigl|g(s)\bigr|^{q}ds,
\end{align*}
with $f(s):= (t-s)^{-\frac{3}{4}}$ and $g(s): = \|(u^{\varepsilon }(s,
\cdot ) - u^{0}(s,\cdot ))(u^{\varepsilon }(s,\cdot ) + u^{0}(s,
\cdot ))\|_{1}$, we get
%
%e49 #&#
\begin{align}
\label{45} \bigl\llVert I^{\varepsilon }(t,\cdot ) \bigr\rrVert
_{2}^{q}  \leqslant c\int_{0}^{t}(t-s)^{-
\frac{3}{4}}
\bigl\llVert \bigl(u^{\varepsilon }(s,\cdot ) - u^{0}(s,\cdot )
\bigr) \bigl(u^{
\varepsilon }(s,\cdot ) + u^{0}(s,\cdot )\bigr)
\bigr\rrVert _{1}^{q}ds.
\end{align}

Now, taking the supremum up to time $t\in [0, T]$, and setting $
\varPhi (s): = \|(u^{\varepsilon }(s,\cdot ) - u^{0}(s,\cdot ))(u^{
\varepsilon }(s,\cdot ) + u^{0}(s,\cdot ))\|_{1}^{q} $, and
$\varPsi (s): = \sup_{0\leqslant r \leqslant s}\varPhi (r)$, (\ref{45}) implies
%
%e50 #&#
\begin{align}
\label{43} \sup_{0\leqslant s \leqslant t} \bigl\llVert
I^{\varepsilon }(s,\cdot ) \bigr\rrVert _{2}^{q} &
\leqslant  c\sup_{0\leqslant s \leqslant t}\int_{0}^{s}(s-r)^{-
\frac{3}{4}}
\varPhi (r)\xch{dr}{dr.}
\nonumber
\\
& \leqslant  c\sup_{0\leqslant s \leqslant t}\int_{0}^{s}(s-r)^{-
\frac{3}{4}}
\sup_{0\leqslant r' \leqslant r}\varPhi \bigl(r'\bigr)\xch{dr}{dr.}
\nonumber
\\
& \leqslant  c\sup_{0\leqslant s \leqslant t}\int_{0}^{s}(s-r)^{-
\frac{3}{4}}
\varPsi (r)\xch{dr}{dr.}
\nonumber
\\
& = c\sup_{0\leqslant s \leqslant t}\int_{0}^{s}r^{-\frac{3}{4}}
\varPsi (s-r)dr.
\end{align}
Since
\begin{equation*}
\varPsi (s-r) = \sup_{0\leqslant r' \leqslant s-r}\varPhi (r') \leqslant
\sup_{0\leqslant r' \leqslant t-r}\varPhi (r') = \varPsi (t-r),
\end{equation*}
then
%
%e51 #&#
\begin{align}
\sup_{0\leqslant s \leqslant t} \bigl\llVert I^{\varepsilon }(s,\cdot ) \bigr
\rrVert _{2}^{q} & \leqslant  c\sup_{0\leqslant s \leqslant t}
\int_{0}^{s}r^{-
\frac{3}{4}} \varPsi (t-r)dr
\nonumber
\\
& =  c\int_{0}^{t}r^{-\frac{3}{4}} \varPsi
(t-r)dr
\nonumber
\\
& =  c\int_{0}^{t}(t-r)^{-\frac{3}{4}}
\varPsi (r)dr.
\nonumber
\end{align}
Introducing the expectation on $\varOmega _{\varepsilon }^{M}(t)$ and taking
into account the facts that $\varOmega _{\varepsilon }^{M}(t)\in
\mathcal{F}_{t}$ and $\varOmega _{\varepsilon }^{M}(t) \subset
\varOmega _{\varepsilon }^{M}(s) $ for $0\leqslant s \leqslant t$, we get
%
%e51 #&#
\begin{eqnarray}
\label{63} \mathbf{E} \Bigl(\mathbf{1}_{\varOmega _{\varepsilon }^{M}(t)} \sup
_{0\leqslant s
\leqslant t} \bigl\llVert I^{\varepsilon }(s,\cdot ) \bigr\rrVert
_{2}^{q} \Bigr)  \leqslant  c\int_{0}^{t}(t-s)^{-\frac{3}{4}}
\mathbf{E} \bigl(\mathbf{1}_{
\varOmega _{\varepsilon }^{M}(s)}\varPsi (s) \bigr)ds.
\end{eqnarray}

Notice that
%
%e52 #&#
\begin{align}
\mathbf{1}_{\varOmega _{\varepsilon }^{M}(s)}\varPsi (s) & \leqslant  \mathbf{1}_{\varOmega _{\varepsilon }^{M}(s)}
\sup_{0\leqslant r\leqslant
s} \bigl\llVert \bigl(u^{\varepsilon }(r,\cdot ) -
u^{0}(r,\cdot )\bigr) \bigl(u^{\varepsilon }(r, \cdot ) +
u^{0}(r,\cdot )\bigr) \bigr\rrVert _{1}^{q}
\nonumber
\\
& \leqslant  \mathbf{1}_{\varOmega _{\varepsilon }^{M}(s)} \sup_{0\leqslant r\leqslant s} \bigl
\llVert u^{\varepsilon }(r,\cdot ) - u^{0}(r, \cdot ) \bigr\rrVert
_{2}^{q} \bigl\llVert u^{\varepsilon }(r,\cdot ) +
u^{0}(r,\cdot ) \bigr\rrVert _{2} ^{q}
\nonumber
\\
& \leqslant  \mathbf{1}_{\varOmega _{\varepsilon }^{M}(s)} \sup_{0\leqslant r\leqslant s} \bigl
\llVert u^{\varepsilon }(r,\cdot ) - u^{0}(r, \cdot ) \bigr\rrVert
_{2}^{q} \bigl( \bigl\llVert u^{\varepsilon }(r,\cdot )
\bigr\rrVert _{2}^{q} + \bigl\llVert u ^{0}(r,
\cdot ) \bigr\rrVert _{2}^{q} \xch{\bigr)}{\bigr).}
\nonumber
\\
& \leqslant  2M^{q}\mathbf{1}_{\varOmega _{\varepsilon }^{M}(s)} \sup
_{0\leqslant r\leqslant s} \bigl\llVert u^{\varepsilon }(r,\cdot ) -
u^{0}(r, \cdot ) \bigr\rrVert _{2}^{q}.
\nonumber
\end{align}

This, together with (\ref{63}), gives
%
%e52 #&#
\begin{align}
&\mathbf{E} \Bigl(\mathbf{1}_{\varOmega _{\varepsilon }^{M}(t)} \sup_{0\leqslant s\leqslant t}
\bigl\llVert I^{\varepsilon }(s,\cdot ) \bigr\rrVert _{2}^{q}
\Bigr)\nonumber\\
&\quad   \leqslant  2cM^{q}\int_{0}^{t}(t-s)^{-\frac{3}{4}}
\mathbf{E} \Bigl(\mathbf{1} _{\varOmega _{\varepsilon }^{M}(s)}\sup_{0\leqslant r\leqslant s}
\bigl\llVert u^{
\varepsilon }(r,\cdot ) - u^{0}(r,\cdot ) \bigr
\rrVert _{2}^{q} \Bigr)ds.
\label{27}
\end{align}

Combining (\ref{29})--(\ref{27}) we get for any $0 \leqslant t \leqslant
T$
%
%e53 #&#
\begin{align}
& \mathbf{E} \Bigl(\mathbf{1}_{\varOmega _{\varepsilon }^{M}(t)} \sup_{0\leqslant s \leqslant t}
\bigl\llVert u^{\varepsilon }(s,\cdot ) - u^{0}(s, \cdot ) \bigr
\rrVert _{2}^{q} \Bigr)
\nonumber
\\
&\quad \leqslant c \Biggl[\varepsilon ^{q/2} + 2M^{q} \int
_{0}^{t}(t-s)^{-
\frac{3}{4}} \mathbf{E} \Bigl(
\mathbf{1}_{\varOmega _{\varepsilon }^{M}(s)} \sup_{0\leqslant r\leqslant s} \bigl\llVert
u^{\varepsilon }(r,\cdot ) - u^{0}(r, \cdot ) \bigr\rrVert
_{2}^{q} \Bigr)ds \Biggr].
\end{align}

Using Gronwall's lemma we deduce that, for all $t\in [0, T]$,
%
%e54 #&#
\begin{equation}
\mathbf{E} \Bigl(\mathbf{1}_{\varOmega _{\varepsilon }^{M}(t)} \sup_{0\leqslant s \leqslant t}
\bigl\llVert u^{\varepsilon }(s,\cdot ) - u^{0}(s, \cdot ) \bigr
\rrVert _{2}^{q} \Bigr) \leqslant c \varepsilon
^{q/2}e^{2cM^{q}}.
\end{equation}

Therefore, for any fixed $M >0$ we have
%
%e55 #&#
\begin{align}\label{37}
&\mathbf{E} \Bigl(\sup_{0\leqslant t \leqslant T} \bigl\llVert
u^{\varepsilon }(t, \cdot ) - u^{0}(t,\cdot ) \bigr\rrVert
_{2}^{q} \Bigr)
\nonumber
\\
&\quad  = \mathbf{E} \Bigl(\mathbf{1}_{\varOmega _{\varepsilon }^{M}(T)} \sup_{0\leqslant t \leqslant T}
\bigl\llVert u^{\varepsilon }(t,\cdot ) - u^{0}(t, \cdot ) \bigr
\rrVert _{2}^{q} \Bigr)\nonumber\\
&\qquad{}  + \mathbf{E} \Bigl(
\mathbf{1}_{\varOmega
\setminus \varOmega _{\varepsilon }^{M}(T)}\sup_{0\leqslant t \leqslant T} \bigl\llVert
u^{\varepsilon }(t,\cdot ) - u^{0}(t,\cdot ) \bigr\rrVert
_{2}^{q} \Bigr)
\nonumber
\\
&\quad  \leqslant c \varepsilon ^{q/2}e^{2cM^{q}} + \bigl(P \bigl(
\varOmega \setminus \varOmega _{\varepsilon }^{M}(T) \bigr)
\bigr)^{1/2} \Bigl(\mathbf{E} \Bigl(\sup_{0\leqslant t
\leqslant T}
\bigl\llVert u^{\varepsilon }(t,\cdot ) - u ^{0}(t,\cdot ) \bigr
\rrVert _{2}^{2q} \Bigr) \Bigr)^{1/2}.
\nonumber
\end{align}
To deal with the second term of the last inequality, on one hand,
estimations (\ref{6bis}) and (\ref{34}) imply that there exists $c>0$
such that
%
%e55 #&#
\begin{equation}
\label{59} \sup_{\varepsilon \in ]0,1]}\mathbf{E} \Bigl(\sup
_{0\leqslant t
\leqslant
T} \bigl\llVert u^{\varepsilon }(t,\cdot ) -
u^{0}(t,\cdot ) \bigr\rrVert _{2}^{q} \Bigr)
< c.
\end{equation}
On the other hand, by the Markov inequality and using again the estimations
(\ref{6bis}) and (\ref{34}) we have
%
%e56 #&#
\begin{align}
P \bigl(\varOmega \setminus \varOmega _{\varepsilon }^{M}(T)
\bigr) & \leqslant  P \Bigl(\sup_{t\in [0,T]} \bigl\llVert
u^{\varepsilon }(t,\cdot ) \bigr\rrVert _{2}^{q} >
M^{q} \Bigr) + P \Bigl(\sup_{t\in [0,T]} \bigl\llVert
u^{0}(t,\cdot ) \bigr\rrVert _{2}^{q} >
M^{q} \Bigr)
\nonumber
\\
& \leqslant  \frac{\mathbf{E}  (\sup_{t\in [0,T]} \llVert u^{\varepsilon
}(t,\cdot ) \rrVert _{2}^{q}  )}{M^{q}} + \frac{\mathbf{E}  (\sup_{t
\in [0,T]} \llVert u^{0}(t,\cdot ) \rrVert _{2}^{q}  )}{M^{q}}
\nonumber
\\
& \leqslant  \frac{\sup_{\varepsilon \in ]0, 1]}\mathbf{E}  (\sup_{t\in [0,T]} \llVert u^{\varepsilon }(t,\cdot ) \rrVert _{2}^{q}  )}{M^{q}} + \frac{\sup_{t\in [0,T]} \llVert u^{0}(t,\cdot ) \rrVert _{2}^{q}}{M^{q}}
\nonumber
\\
& \leqslant  \frac{c}{M^{q}}.
\end{align}
Then
%
%e57 #&#
\begin{eqnarray}
\mathbf{E} \Bigl(\sup_{0\leqslant t \leqslant T} \bigl\llVert
u^{\varepsilon }(t, \cdot ) - u^{0}(t,\cdot ) \bigr\rrVert
_{2}^{q} \Bigr) \leqslant c \varepsilon
^{q/2}e^{2cM^{q}} + \frac{c}{{M}^{q/2}}.
\end{eqnarray}

Letting $\varepsilon $ tends to zero and taking into account the fact
that $\varepsilon $ and $M$ are independent, we obtain
\begin{equation*}
\limsup_{\varepsilon \longrightarrow 0}\mathbf{E} \Bigl(\sup_{0\leqslant
t \leqslant T}
\bigl\llVert u^{\varepsilon }(t,\cdot ) - u^{0}(t,\cdot ) \bigr
\rrVert _{2} ^{q} \Bigr) \leqslant \frac{c}{{M}^{q/2}}.
\end{equation*}

Finally, since $M$ is arbitrary, we conclude that (\ref{33}) holds.
\end{proof}

\begin{appendix}
\section*{Appendix}
\renewcommand{\thethm}{4.\arabic{thm}}
\setcounter{thm}{0}

This section contains some technical results needed in the proof of the
main theorem of the paper.

First, we recall the following result proved in Lemma 3.1 in
\cite{gyongy1998existence}.

For $H(t,s;x,y):= G(t-s, x, y)$ or $H(t,s;x,y):= (\partial /\partial
_{y})G(t-s, x, y)$, where $0\leqslant s \leqslant t \leqslant T$ and
$x,y \in [0,1]$, define the linear operator $J$ by
\begin{equation*}
J(v) (t,x) := \int_{0}^{t}\int
_{0}^{1}H(r, t; x,y)v(r,y)dydr, \quad t \in [0,T], \ x\in [0,1],
\end{equation*}
for every $v\in \mathrm{L}^{\infty }  ([0,T], L^{1}([0,1])  )$.
%
%l4.1 #&#
\begin{lem}
\label{gyongylemma}
Let $\rho > 1$, $\lambda \in [1, \rho [$ and set $\kappa := 1 + \frac{1}{
\rho } - \frac{1}{\lambda }$. Then, $J$ is a bounded linear operator
from $\mathrm{L}^{\gamma }  ([0,T], L^{\lambda }([0,1])  )$
into $C  ([0,T], L^{\rho }([0,1])  )$ for $\gamma > 2\kappa
^{-1}$. Moreover, there exists a positive constant $c$ such that for all
$t\in [0, T]$,
%
%e58 #&#
\begin{equation}
\bigl\llVert J(v) (t, \cdot ) \bigr\rrVert _{\rho } \leqslant c \int
_{0}^{t}(t-r)^{\frac{
\kappa }{2}
- 1} \bigl\llVert v(r,
\cdot ) \bigr\rrVert _{\lambda }dr.
\end{equation}
\end{lem}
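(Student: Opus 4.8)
The plan is to reduce the claim to a pointwise-in-time spatial estimate, obtained from the Gaussian bounds on the kernel $H$ together with Young's convolution inequality, and then to read off both the operator bound and the continuity in $t$ from the resulting singular-kernel inequality. First I would record the standard Gaussian-type bounds for the Dirichlet heat kernel (as in \cite{gyongy1998existence}): there is a constant $c$, depending only on $T$, such that for $0 < s \le T$ and $x,y \in [0,1]$,
\[
0 \le G_s(x,y) \le \frac{c}{\sqrt{s}} \exp\left(-\frac{(x-y)^2}{c\,s}\right), \qquad |\partial_y G_s(x,y)| \le \frac{c}{s} \exp\left(-\frac{(x-y)^2}{c\,s}\right).
\]
Writing $\Phi_s(z) := c\,s^{-1}\exp(-z^2/(cs))$ for the (larger) derivative bound and extending $v(r,\cdot)$ by zero outside $[0,1]$, the inner spatial integral is dominated pointwise by the genuine convolution $\Phi_{t-r} * |v(r,\cdot)|$ on $\mathbb{R}$; this is what legitimizes the clean use of Young's inequality even though $G$ is not a convolution kernel on $[0,1]$.

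Next, for each fixed $r\in[0,t)$ I would apply Young's inequality on $\mathbb{R}$ with exponents satisfying $1 + \tfrac1\rho = \tfrac1p + \tfrac1\lambda$, i.e. $\tfrac1p = 1 + \tfrac1\rho - \tfrac1\lambda = \kappa$. A direct computation of the Gaussian $L^p$-norm gives $\|\Phi_{t-r}\|_{L^p(\mathbb{R})} \le c\,(t-r)^{-1 + \frac{1}{2p}} = c\,(t-r)^{\frac\kappa2 - 1}$, whence, after restricting back to $[0,1]$,
\[
\left\| \int_0^1 H(t,r;\cdot,y)\,v(r,y)\,dy \right\|_\rho \le c\,(t-r)^{\frac\kappa2 - 1}\,\bigl\|v(r,\cdot)\bigr\|_\lambda.
\]
(The plain-kernel case $H=G$ produces the milder exponent $\tfrac\kappa2-\tfrac12$, which is absorbed into the same bound since $s\le T$.) Integrating over $r\in[0,t]$ and invoking Minkowski's integral inequality yields exactly the asserted estimate
\[
\bigl\|J(v)(t,\cdot)\bigr\|_\rho \le c\int_0^t (t-r)^{\frac\kappa2 - 1}\bigl\|v(r,\cdot)\bigr\|_\lambda\,dr.
\]

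To obtain boundedness into $C([0,T], L^\rho([0,1]))$, I would apply H\"older's inequality in $r$ with exponents $\gamma$ and $\gamma'=\gamma/(\gamma-1)$: the right-hand side is dominated by $\big(\int_0^t (t-r)^{(\frac\kappa2-1)\gamma'}dr\big)^{1/\gamma'}\|v\|_{L^\gamma([0,T],L^\lambda)}$, and the singular integral converges precisely when $(1-\tfrac\kappa2)\gamma'<1$, which is equivalent to $\gamma > 2\kappa^{-1}$. This delivers the uniform bound $\sup_{t\le T}\|J(v)(t,\cdot)\|_\rho \le c\,\|v\|_{L^\gamma([0,T],L^\lambda)}$ with $c$ depending on $T,\rho,\lambda,\gamma$. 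Finally, continuity of $t\mapsto J(v)(t,\cdot)$ into $L^\rho$ follows by splitting $J(v)(t,\cdot)-J(v)(t',\cdot)$ (for $t'\le t$) into a short-interval contribution over $[t',t]$, controlled by the just-proved estimate together with the integrability of $(t-r)^{\kappa/2-1}$, and a bulk contribution over $[0,t']$, controlled by the $L^\rho_x$-continuity of the kernel differences quantified in Lemma \ref{20}, iii)--v), combined with dominated convergence.

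The main obstacle is this last continuity step: one must show that both pieces produce $L^\rho$ contributions that vanish as $|t-t'|\to 0$, uniformly enough to conclude membership in $C([0,T],L^\rho)$ rather than mere boundedness. The hypothesis $\gamma>2\kappa^{-1}$ is exactly what renders the near-diagonal singularity $(t-r)^{\kappa/2-1}$ integrable against an $L^{\gamma'}$ weight, so that the short-interval term is small, while the kernel-difference estimates of Lemma \ref{20} are what tame the bulk term; the reduction to the translation-invariant Gaussian on $\mathbb{R}$ is the device that keeps the entire argument at the level of Young's inequality.
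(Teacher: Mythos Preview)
The paper does not actually prove this lemma: it simply records it as Lemma~3.1 of Gy\"ongy \cite{gyongy1998existence} and refers the reader there. Your argument---Gaussian upper bounds on $G$ and $\partial_y G$, reduction to a convolution on $\mathbb{R}$, Young's inequality with $1/p=\kappa$ to produce the pointwise-in-time estimate, and then H\"older in $r$ under the hypothesis $\gamma>2/\kappa$ for the uniform bound---is precisely the standard route and matches Gy\"ongy's proof.

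One small imprecision in the continuity step: you invoke Lemma~\ref{20} iii)--v) for the bulk term, but those items are $L^2_x$ estimates for $G$, not $L^\rho_x$ estimates for $\partial_y G$. The idea is right, but the specific citation does not cover the derivative kernel in the $L^\rho$ norm you need. The required difference bounds for $\partial_y G$ are of the same Gaussian type and can be derived from your pointwise bounds (or found in \cite{gyongy1998existence}); alternatively, once the uniform operator bound $\sup_{t\le T}\|J(v)(t,\cdot)\|_\rho \le c\,\|v\|_{L^\gamma([0,T],L^\lambda)}$ is established, continuity follows more cleanly by density: approximate $v$ in $L^\gamma L^\lambda$ by continuous (or step) functions, for which $t\mapsto J(v)(t,\cdot)$ is easily seen to be continuous, and pass to the limit using the uniform bound.
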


The following lemma is a consequence of Lemma \xch{3.1}{3.1.} in
\cite{chenal1997uniform}, its proof is omitted.
%
%l4.2 #&#
\begin{lem}
\label{56}
Let $\mathcal{F}_{t}= \sigma (W(s,x); 0\leqslant s\leqslant t; 0
\leqslant x \leqslant 1)$ and let $Z: \varOmega \times [0,T]\times [0,1]
\longrightarrow \mathbb{R}$ be a $\mathcal{F}_{t}$-predictable process
such that
\begin{equation*}
\sup_{0\leqslant s \leqslant T}\sup_{0\leqslant y \leqslant
1}|Z(s,y)| \leqslant \rho .
\end{equation*}

Set $I(t,x) := \int_{0}^{t}\int_{0}^{1}G_{t-u}(y,z)Z(u,z)W(du,dz)$.
Then, there exist positive constants $C_{1}$ and $C_{2}$ such that for
$M>C_{1}\rho $,
%
%e59 #&#
\begin{equation}
\label{55} P \Bigl(\sup_{0\leqslant s \leqslant T}\sup
_{0\leqslant y
\leqslant 1} \bigl\llvert I(s,y) \bigr\rrvert \geqslant M \Bigr)
\leqslant C_{1}\exp \biggl(-\frac{M
^{2}}{\rho ^{2}C_{2}  (1+T^{\frac{1}{8}}  )} \biggr).
\end{equation}
\end{lem}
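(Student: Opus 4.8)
The plan is to read \eqref{55} as a Gaussian-type tail bound for the supremum of a stochastic convolution with bounded integrand, and to obtain it in two stages: first a sub-Gaussian estimate for the increments of $I$, derived from the exponential martingale inequality together with the kernel bounds of Lemma \ref{20}, and then a Garsia--Rodemich--Rumsey chaining argument to pass from increments to the supremum over the parameter set $[0,T]\times [0,1]$.

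First I would fix two points $(t,x)$ and $(t',x')$ with $t'\leqslant t$ and write the increment $I(t,x)-I(t',x')$ as a single Walsh stochastic integral against $W$, whose integrand is a deterministic combination of Green kernels times the predictable, uniformly bounded process $Z$. Viewing this integral as a continuous martingale $N$ in its upper time limit, its quadratic variation is controlled deterministically: since $|Z|\leqslant \rho$,
\[
\langle N\rangle \leqslant \rho ^{2}\int _{0}^{t}\int _{0}^{1}\bigl[G_{t-u}(x,z)-G_{t'-u}(x',z)\bigr]^{2}\,dz\,du ,
\]
and splitting the time integral at $t'$ and invoking parts iii)--v) of Lemma \ref{20} bounds the right-hand side by $c\rho ^{2}\bigl(\sqrt{t-t'}+|x-x'|\bigr)$. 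The standard exponential supermartingale bound $\mathbf{E}\exp(\theta N-\tfrac12\theta ^{2}\langle N\rangle )\leqslant 1$, combined with this deterministic control on $\langle N\rangle $, then yields for every $\theta \in \mathbb{R}$
\[
\mathbf{E}\exp \bigl(\theta \bigl[I(t,x)-I(t',x')\bigr]\bigr)\leqslant \exp \Bigl(\tfrac12\theta ^{2}c\rho ^{2}\bigl(\sqrt{t-t'}+|x-x'|\bigr)\Bigr).
\]
Taking $(t',x')=(0,0)$ shows $I(t,x)$ is sub-Gaussian at each point with variance proxy $c\rho ^{2}$, whence the pointwise tail $P(|I(t,x)|\geqslant M)\leqslant 2\exp (-M^{2}/(2c\rho ^{2}))$.

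Next I would upgrade this to a supremum bound. Introducing the gauge $\Psi (u)=e^{u^{2}}-1$ and the control distance $d\bigl((t,x),(t',x')\bigr)=\bigl(\sqrt{|t-t'|}+|x-x'|\bigr)^{1/2}$, the increment estimate above implies that $\mathbf{E}\,\Psi \bigl(|I(t,x)-I(t',x')|/(c\rho\, d)\bigr)$ is bounded by a universal constant for a suitable $c$. Feeding this into the Garsia--Rodemich--Rumsey lemma with the Young function $\Psi $ produces a pathwise estimate on the modulus of continuity of $I$, and hence on $\sup _{s,y}|I(s,y)|$, whose tail is again Gaussian. Tracking the size of $[0,T]\times [0,1]$ in the metric $d$ through this chaining is precisely what turns the pointwise variance proxy $c\rho ^{2}$ into the factor $\rho ^{2}C_{2}\bigl(1+T^{1/8}\bigr)$ of \eqref{55}; the exponent $1/8$ arises from the $T^{1/4}$ diameter in the time direction combined with the square root present in the gauge.

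The main obstacle I expect is the second stage. The increment estimate is essentially routine once the kernel bounds of Lemma \ref{20} are available, but extracting the supremum tail in the \emph{sharp} Gaussian form, and in particular pinning down the precise $T$-dependence $(1+T^{1/8})$ rather than a cruder bound, demands careful bookkeeping of constants in the Garsia--Rodemich--Rumsey argument. This is exactly the content of Lemma~3.1 in \cite{chenal1997uniform}; in practice one therefore verifies that $Z$ satisfies the boundedness hypothesis required there and invokes that result directly, which is the route taken here.
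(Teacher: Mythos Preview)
Your proposal is correct and aligns with the paper's treatment: the paper does not give an independent proof but simply states that the lemma is a consequence of Lemma~3.1 in \cite{chenal1997uniform} and omits the argument, which is precisely the reference you ultimately invoke. Your sketch of the underlying mechanism (exponential martingale bound on increments via the kernel estimates of Lemma~\ref{20}, followed by a Garsia--Rodemich--Rumsey chaining) is the standard route behind that cited result, so there is no discrepancy in approach.
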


\begin{proof}[Proof of Proposition \ref{22}]
To use a fixed point argument, we consider, for any given $\mathrm{L}^{2}([0,1])$-valued
function $\{w(t), t\in [0, T]\}$, the following operator
\begin{align*}
(\mathcal{A}w) (t,x) &:= - 2\int_{0}^{t}\int
_{0}^{1} \partial _{y} G
_{t-s}(x,y)w(s,y)u^{0}(s, y) dyds\\ &\quad{}+ \int
_{0}^{t}\int_{{0}}^{1}G_{t-s}(x,y)
\sigma \bigl(u^{0}(s,y)\bigr)v(s,y)dyds.
\end{align*}
We are going to prove that $\mathcal{A}$ is a contraction operator
on the Banach space $\mathbb{H}$ of $\mathrm{L}^{2}([0,1])$-valued
functions $\{w(t), t\in [0, T]\}$ such that $u(0) = 0$ equipped with
the norm
%
%e60 #&#
\begin{equation}
\llVert w \rrVert : = \int_{0}^{T}e^{-\lambda t}
\bigl\llVert w(t, \cdot ) \bigr\rrVert _{2}^{2}dt, \quad
\text{where} \ \lambda >0 \ \text{will be fixed later}.
\end{equation}

{Step 1.} Let $t\in [0, T]$. We first prove that if $w$ satisfies
$
\sup_{0\leqslant s \leqslant t}\|w(s,\cdot )\|_{2}^{q} < \infty $ then
$\mathcal{A}w$ satisfies also this estimate. By Lemma
\ref{gyongylemma}, the Cauchy--Schwarz inequality and the hypothesis on
$w$ we have
%
%e61 #&#
\begin{align}
\label{36} \Bigl(\sup_{0\leqslant s \leqslant t} \bigl\llVert
\mathcal{A}w(t,\cdot ) \bigr\rrVert _{2} ^{q} \Bigr) &
\leqslant c \Biggl[ 1 + \int_{0}^{t}(t-s)^{
\frac{-3}{4}}
\Bigl(\sup_{0\leqslant r
\leqslant s} \bigl\llVert w(r,\cdot )u^{0}(r,
\cdot ) \bigr\rrVert _{1}^{q} \Bigr)ds \Biggr]
\nonumber
\\
& \leqslant  c \Biggl[ 1 + \int_{0}^{t}(t-s)^{\frac{-3}{4}}
\Bigl(\sup_{0\leqslant r
\leqslant s} \bigl\llVert w(r,\cdot ) \bigr\rrVert
_{2}^{q} \bigl\llVert u^{0}(r,\cdot ) \bigr
\rrVert _{2}^{q} \Bigr)ds \Biggr]
\nonumber
\\
& \leqslant  c \Biggl[ 1 + \int_{0}^{t}(t-s)^{\frac{-3}{4}}
\Bigl(\sup_{0\leqslant r
\leqslant s} \bigl\llVert w(r,\cdot ) \bigr\rrVert
_{2}^{q} \Bigr)ds \Biggr]
\nonumber
\\
& \leqslant  c \Biggl[ 1 + \int_{0}^{t}(t-s)^{\frac{-3}{4}}ds
\Biggr],
\nonumber
\end{align}
which is clearly finite.

Step 2. Let $w_{1}$ and $w_{2}$ be two elements in $\mathbb{H}$. For any
$t\in [0,T]$ we have
%
%e61 #&#
\begin{align}
\bigl( \bigl\llVert \mathcal{A}w_{1}(t,\cdot ) -
\mathcal{A}w_{2}(t,\cdot ) \bigr\rrVert _{2}^{q}
\bigr) & \leqslant  c\int_{0}^{t}(t-s)^{\frac{-3}{4}}
\bigl( \bigl\llVert \bigl(w _{1}(r,\cdot ) - w_{2}(r,
\cdot )\bigr)u^{0}(r,\cdot ) \bigr\rrVert _{1}^{q}
\bigr)ds
\nonumber
\\
& \leqslant  c\int_{0}^{t}(t-s)^{\frac{-3}{4}}
\bigl( \bigl\llVert w_{1}(s, \cdot ) - w_{2}(s, \cdot )
\bigr\rrVert _{2}^{q} \bigl\llVert u^{0}(r,
\cdot ) \bigr\rrVert _{2}^{q} \bigr)ds
\nonumber
\\
& \leqslant  c\int_{0}^{t}(t-s)^{\frac{-3}{4}}
\bigl( \bigl\llVert w_{1}(s, \cdot ) - w_{2}(s,\cdot )
\bigr\rrVert _{2}^{q} \bigr)ds.
%\nonumber
\end{align}
Then, using Fubini's theorem we have
%
%e62 #&#
\begin{align*}
&\int_{0}^{T}e^{-\lambda t} \bigl( \bigl
\llVert \mathcal{A}w_{1}(t,\cdot ) - \mathcal{A}w_{2}(t,
\cdot ) \bigr\rrVert _{2}^{q} \bigr)dt\\
&\quad  \leqslant  c\int
_{0}^{T}e^{-\lambda t}\int
_{0}^{t}(t-s)^{\frac{-3}{4}} \bigl( \bigl
\llVert w_{1}(s, \cdot ) - w_{2}(s,\cdot ) \bigr\rrVert
_{2}^{q} \bigr)dsdt
\nonumber
\\
&\quad  \leqslant  c\int_{0}^{T}\int
_{s}^{T}e^{-\lambda t}(t-s)^{
\frac{-3}{4}}
\bigl( \bigl\llVert w_{1}(s,\cdot ) - w_{2}(s,\cdot )
\bigr\rrVert _{2}^{q} \bigr)dsdt
\nonumber
\\
&\quad  \leqslant  c\int_{0}^{T} \bigl( \bigl
\llVert w_{1}(s,\cdot ) - w_{2}(s,\cdot ) \bigr\rrVert
_{2}^{q} \bigr)\int_{s}^{T}
e^{-\lambda t}(t-s)^{\frac{-3}{4}}dsdt
\nonumber
\\
&\quad  \leqslant  c \Biggl(\int_{0}^{T}e^{-\lambda r}
r^{\frac{-3}{4}}dr \Biggr) \llVert w_{1}- w_{2}
\rrVert _{\mathbb{H}}^{q}.
\nonumber
\end{align*}
Take $\lambda $ and $T_{0} > 0$ such that
\begin{equation*}
c \int_{0}^{T_{0}}e^{-\lambda r}
r^{\frac{-3}{4}}dr < 1.
\end{equation*}
Then, for $T\leqslant T_{0}$, the operator $\mathcal{A}$ is a
contraction on $\mathbb{H}$. Consequently, for any $v\in S^{N}$, it
admits a unique fixed point $u^{v}\in \mathbb{H}$ which satisfies the
equation (\ref{17}). By concatenation we can construct a solution on
every interval $[0,T]$.

The continuity\index{continuity} of the solution $u^{v}$ follows from the continuity\index{continuity} of the
integrals. For the estimation (\ref{estm}), one can use for
$u^{v}$ the same computations as in (\ref{36}) and Gronwall's lemma.
\end{proof}

In order to prove Lemma \ref{tight2} we have used the following lemma
whose proof can be found in Lemma $3.3$ in
\cite{gyongy1998existence}.

%l4.3 #&#
\begin{lem}
\label{19}
For $v\in L^{\infty }([0,T]; L^{1}([0,1]))$, set
\begin{align*}
J(v)(t,x) := \int_{0}^{t}\int_{0}^{1}\partial _{y} G(t,s,x,y)v(s,y)dyds,
\quad t\in [0,T],\  x\in [0,1] .
\end{align*}
Let $\rho \in [1, +\infty [$ and
$q\in [1, \rho [$. Moreover, let $\zeta _{\varepsilon }(t,x)$ be a family
of random fields on $[0,T]\times [0,1]$ such that $\sup_{t\leqslant T}
\|\zeta _{\varepsilon }(t,\cdot )\|_{q} \leqslant \theta _{\varepsilon
}$, where $\theta _{\varepsilon }$ is a finite random variable for every
$\varepsilon $. Assume that the family $\theta _{\varepsilon }$ is
bounded in probability, i.e.,
\begin{equation*}
\lim_{c\longrightarrow +\infty }\sup_{\varepsilon }\mathbb{P}\{ \theta
_{\varepsilon } \geqslant c \} = 0.
\end{equation*}
Then, the family $  (J(\zeta _{\varepsilon })  )_{\varepsilon
> 0}$ is uniformly tight in $C([0,T]; L^{\rho }([0,1]))$.
\end{lem}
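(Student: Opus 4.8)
The plan is to reduce the probabilistic tightness to a purely deterministic relative-compactness statement about the operator $J$ by a localization argument, and then to establish that compactness from the smoothing properties of the singular kernel $\partial_y G$ encoded in Lemma \ref{20} and Lemma \ref{gyongylemma}.

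First I would localize on the size of $\theta_\varepsilon$. Since $(\theta_\varepsilon)_\varepsilon$ is bounded in probability, for every $\eta>0$ there is $c>0$ with $\sup_\varepsilon \mathbb{P}(\theta_\varepsilon>c)<\eta$. On the event $\{\theta_\varepsilon\le c\}$ the path $\zeta_\varepsilon$ belongs, for a.e. $\omega$, to the deterministic ball $B_c:=\{v: \sup_{t\le T}\|v(t,\cdot)\|_q\le c\}$ of $L^\infty([0,T];L^q([0,1]))$. Hence, once we know that $J(B_c)$ is relatively compact in $C([0,T];L^\rho([0,1]))$, the set $K:=\overline{J(B_c)}$ witnesses tightness: indeed $\{J(\zeta_\varepsilon)\notin K\}\subset\{\theta_\varepsilon>c\}$, so $\sup_\varepsilon\mathbb{P}(J(\zeta_\varepsilon)\notin K)\le\eta$, and $\eta$ is arbitrary. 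This transfers all the work onto the deterministic claim that $J$ maps $L^\infty L^q$-balls into relatively compact subsets of $C([0,T];L^\rho)$.

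To prove that relative compactness I would invoke the Arzel\`a--Ascoli criterion in $C([0,T];L^\rho([0,1]))$, for which three ingredients suffice, uniformly over $v\in B_c$: (a) a uniform bound $\sup_{t\le T}\|J(v)(t,\cdot)\|_\rho\le c'$, immediate from Lemma \ref{gyongylemma} applied with $\lambda=q$ and $\kappa=1+1/\rho-1/q\in(0,1)$, since then $\|J(v)(t,\cdot)\|_\rho\le c\int_0^t(t-r)^{\kappa/2-1}\|v(r,\cdot)\|_q\,dr\le c'' c\,T^{\kappa/2}$; (b) equicontinuity in time, namely $\sup_{v\in B_c}\|J(v)(t,\cdot)-J(v)(t',\cdot)\|_\rho\to 0$ as $|t-t'|\to 0$; and (c) relative compactness in $L^\rho([0,1])$ of the family of time-slices $\{J(v)(t,\cdot): v\in B_c,\ t\in[0,T]\}$, for which I would use the Kolmogorov--Riesz criterion, controlling the spatial increments $\|J(v)(t,\cdot+z)-J(v)(t,\cdot)\|_\rho$ uniformly in $v$ and letting $|z|\to 0$. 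The gain of integrability from $q$ to $\rho>q$ is exactly the regularization furnished by the kernel and is what makes these increments small.

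For both (b) and (c) the decisive estimates come from $\partial_y G$: the time increment splits as $\int_0^{t'}[\partial_y G_{t-s}-\partial_y G_{t'-s}]v\,dy\,ds+\int_{t'}^t\partial_y G_{t-s}\,v\,dy\,ds$, and each piece is handled by the $L^\beta$-integrability of $\partial_y G$ from part ii) of Lemma \ref{20} (with $\beta$ close to $3/2$) together with H\"older's inequality against $\|v\|_q$; the spatial increment is treated analogously. I expect the main obstacle to be precisely this control of the singular derivative kernel: $\partial_y G_{t-s}$ is only integrable to powers $\beta<3/2$ and blows up like $(t-s)^{-1}$ on the diagonal, so the naive $L^2$ estimates that work for the terms $I_1$ and $I_4$ are unavailable, and the equicontinuity must be extracted through the sharp balance of exponents in Lemma \ref{gyongylemma}. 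This deterministic smoothing step is the heart of the argument and coincides with the content of Lemma 3.3 of \cite{gyongy1998existence}.
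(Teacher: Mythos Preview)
The paper does not actually prove this lemma: it merely records that the statement is Lemma~3.3 of \cite{gyongy1998existence} and refers the reader there. Your proposal goes further and sketches the argument itself --- localize on the event $\{\theta_\varepsilon\le c\}$ to reduce to a deterministic relative-compactness statement for $J$ on $L^\infty([0,T];L^q)$-balls, then verify that compactness in $C([0,T];L^\rho)$ via Arzel\`a--Ascoli (uniform $L^\rho$-bound from Lemma~\ref{gyongylemma}, equicontinuity in $t$, and pointwise relative compactness of slices via Kolmogorov--Riesz). This is exactly the structure of Gy\"{o}ngy's proof, so your approach is correct and in fact more informative than the paper's own treatment, which simply defers to the same reference you invoke at the end.
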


We summarize some important proprieties of the sequence $\{K_{n}, n
\geqslant 1\}$ in the following lemma.
%
%l4.4 #&#
\begin{lem}
\label{80}
Let $(v_{n})\subset S^{N}$ be a sequence converging weakly in
$\mathcal{H}_{T}$ to an element $v$ in $S^{N}$. The sequence
$\{K_{n}, n\geqslant 1\}$ defined in (\ref{83}) satisfies the
following:
\begin{itemize}%
\item[i)] the sequence $\{K_{n}(t,x), n\geqslant 1\}$ converges to
zero, for any fixed $(t,x)\in [0,T]\times [0,1]$;
\item[ii)] there exists a constant $c(N,T)$ depending on $N$ and
$T$ such that
%
%e62 #&#
\begin{equation}
\label{81} \sup_{n\geqslant 1}\sup_{t\in [0,T]}
\bigl\llVert K_{n}(t,\cdot ) \bigr\rrVert _{2} \leqslant
c(N, T);
\end{equation}%
\item[iii)]
%
%e63 #&#
\begin{equation}
\label{82} \lim_{n\longrightarrow \infty }\sup_{t\in [0,T]}
\bigl\llVert K_{n}(t, \cdot ) \bigr\rrVert _{2} = 0.
\end{equation}
\end{itemize}
\end{lem}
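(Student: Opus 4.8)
The plan is to establish the three assertions in turn, with i) and ii) furnishing the two ingredients---pointwise convergence and a uniform bound---that drive the uniform convergence in iii). For i), I would fix $(t,x)\in[0,T]\times[0,1]$ and read $K_{n}(t,x)$ as the $\mathcal{H}_{T}$-pairing $\langle \phi_{t,x},\, v_{n}-v\rangle_{\mathcal{H}_{T}}$, where $\phi_{t,x}(s,y):=G_{t-s}(x,y)\,\sigma(u^{0}(s,y))\,\mathbf{1}_{\{s\leqslant t\}}$. Since $\sigma$ is bounded and, by Lemma \ref{20} iii) together with the symmetry $G_{t}(x,y)=G_{t}(y,x)$, one has $\int_{0}^{t}\int_{0}^{1}G_{t-s}^{2}(x,y)\,dy\,ds\leqslant c$, the map $\phi_{t,x}$ lies in $\mathcal{H}_{T}$. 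The assumed weak convergence of $v_{n}$ to $v$ in $\mathcal{H}_{T}$ then gives $K_{n}(t,x)=\langle \phi_{t,x},\,v_{n}-v\rangle_{\mathcal{H}_{T}}\to 0$.

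For ii), I would apply the Cauchy--Schwarz inequality in $(s,y)$ at fixed $x$, bound $\sigma$ by $\|\sigma\|_{\infty}$, and use $\|v_{n}-v\|_{\mathcal{H}_{T}}\leqslant\|v_{n}\|_{\mathcal{H}_{T}}+\|v\|_{\mathcal{H}_{T}}\leqslant 2\sqrt{N}$ (both $v_{n},v\in S^{N}$). With the kernel estimate of Lemma \ref{20} iii) this yields $|K_{n}(t,x)|^{2}\leqslant 4cN\|\sigma\|_{\infty}^{2}$ uniformly in $(t,x)$ and $n$; integrating in $x$ over $[0,1]$ gives (\ref{81}).

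The core is iii). First, combining the pointwise convergence of i) with the uniform bound of ii), dominated convergence in the $x$-integral gives $\|K_{n}(t,\cdot)\|_{2}\to 0$ for each fixed $t$. To promote this to convergence uniform in $t$, I would prove an $n$-independent modulus of continuity for $t\mapsto K_{n}(t,\cdot)\in L^{2}([0,1])$. For $t'\leqslant t$, I split $K_{n}(t,x)-K_{n}(t',x)$ into a term with increment $G_{t-s}-G_{t'-s}$ integrated over $[0,t']$ and a term with $G_{t-s}$ integrated over $[t',t]$; Cauchy--Schwarz, the bound $\|v_{n}-v\|_{\mathcal{H}_{T}}^{2}\leqslant 4N$, and $\|\sigma\|_{\infty}$ reduce matters to the two Green-kernel quantities bounded by Lemma \ref{20} iv) and iii), respectively. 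After a Fubini exchange and integration in $x$ this produces $\|K_{n}(t,\cdot)-K_{n}(t',\cdot)\|_{2}^{2}\leqslant c(N)\sqrt{|t-t'|}$, a bound independent of $n$.

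With pointwise-in-$t$ convergence and $n$-uniform equicontinuity established, (\ref{82}) follows by a standard finite-net argument: given $\varepsilon>0$, fix $\delta$ with $c(N)\delta^{1/4}<\varepsilon/2$, choose a $\delta$-net $t_{1},\dots,t_{m}$ of $[0,T]$, use the pointwise convergence to make $\|K_{n}(t_{i},\cdot)\|_{2}<\varepsilon/2$ for every $i$ and all large $n$, and interpolate via the modulus of continuity. I expect the equicontinuity step in iii) to be the main obstacle: one must split the time integral at $t'$ correctly and, crucially, reorient the estimates of Lemma \ref{20} (stated for integration in the first spatial slot) through the kernel symmetry before invoking Fubini; once this is set up, the remaining computations are routine.
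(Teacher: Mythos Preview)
Your proposal is correct and follows essentially the same route as the paper: part i) via the $\mathcal{H}_{T}$-pairing and weak convergence, part ii) via Cauchy--Schwarz and the Green-kernel bound, and part iii) via the $n$-uniform H\"older estimate $\|K_{n}(t,\cdot)-K_{n}(t',\cdot)\|_{2}^{2}\leqslant c(N,T)\,|t-t'|^{1/2}$ obtained from Lemma~\ref{20} iii)--iv). The only cosmetic difference is that the paper packages the last step as ``bounded equicontinuous family $+$ i) $+$ Arzel\`a--Ascoli'', whereas you spell out the dominated-convergence step (to pass from pointwise-in-$(t,x)$ to pointwise-in-$t$ $L^{2}$-convergence) and then run an explicit finite-net argument; your version is in fact the cleaner justification, since Arzel\`a--Ascoli in $C([0,T];L^{2})$ needs exactly that pointwise-in-$t$ convergence to identify the limit.
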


\begin{proof} First notice that since
\begin{align*}
\bigl\llVert \mathbf{1}_{[0, t]}(\cdot )G_{t-\cdot }(x,\ast )
\sigma \bigl(u^{0}( \cdot ,\ast )\bigr) \bigr\rrVert
^{2}_{\mathcal{H}_{T}} &:= \int_{0}^{T}
\int_{{0}}^{1}\mathbf{1}_{[0,
t]}(s)G_{t-s}^{2}(x,y)
\sigma ^{2}\bigl(u^{0}(s,y)\bigr)dyds
\\
&\leqslant  c \sup_{x\in [0,1]} \int_{0}^{t}
\int_{{0}}^{1}G_{t-s}
^{2}(x,y)dyds < +\infty ,
\end{align*}
we have $\mathbf{1}_{[0, t]}(\cdot )G_{t-\cdot }(x,\ast )\sigma (u
^{0}(\cdot ,\ast )) \in \mathcal{H}_{T}$ and hence
\begin{eqnarray*}
K_{n}(t,x) = \bigl\langle \mathbf{1}_{[0, t]}(\cdot
)G_{t-\cdot }(x,\ast ) \sigma \bigl(u^{0}(\cdot ,\ast )\bigr)
, v_{n} - v \bigr\rangle _{\mathcal{H}_{T}}.
\end{eqnarray*}

Therefore by the weak convergence\index{weak convergence} of $(v_{n})$ to $v$ in $\mathcal{H}
_{T}$, we get the point \textit{i)} of Lemma \ref{80}.

Now, let us show (\ref{81}) and (\ref{82}). Using the Cauchy--Schwarz
inequality, the boundedness\index{boundedness} of $\sigma $, the facts that $v_{n}$,
$v \in S^{N}$ and Lemma \ref{20}, we have for any $0\leqslant t\leqslant
T$,
%
%e64 #&#
\begin{align}
\label{65} \bigl\llVert K_{n}(t,\cdot ) \bigr\rrVert
_{2}^{2} & =  \int_{0}^{1}
\Biggl\llvert \int_{0}^{t}\int
_{{0}}^{1}G_{t-s}(x,y)\sigma
\bigl(u^{0}(s,y)\bigr) \bigl(v_{n}(s,y) -v(s,y)
\bigr)dyds \Biggr\rrvert ^{2}dx
\nonumber
\\
& \leqslant  \llVert v_{n} - v \rrVert _{\mathcal{H}_{T}}^{2}
\Biggl(\sup_{x
\in [0,1]} \int_{0}^{t}
\int_{{0}}^{1}\bigl(G_{t-s}(x,y)\sigma
\bigl(u^{0}(s,y)\bigr)\bigr)^{2}dyds \Biggr)
\nonumber
\\
& \leqslant  c(N, T) \Biggl(\sup_{x\in [0,1]} \int
_{0}^{t}\int_{{0}}
^{1} G_{t-s}^{2}(x,y)dyds \Biggr)
\nonumber
\\
& \leqslant  c(N, T),
\end{align}
for some constant $c(N, T)$ depending only on $N$ and $T$, and not on
$n$. This yields (\ref{81}).

It remains to prove (\ref{82}). Following similar arguments as above,
we have, for any $t, t' \in [0,T]$ such that $t \leqslant t'$,
%
%e65 #&#
\begin{align}
\label{65-ascoli-} \bigl\llVert K_{n}(t,\cdot ) -
K_{n}\bigl(t',\cdot \bigr) \bigr\rrVert
_{2}^{2} & \leqslant  c(N, T)\Biggl(\sup
_{x\in [0,1]} \int_{0}^{t}\int
_{{0}} ^{1} \bigl(G_{t-s}(x,y) -
G_{t'-s}(x,y)\bigr)^{2}dyds
\nonumber
\\
&  \quad{}+ \sup_{x\in [0,1]} \int_{t}^{ t'}
\int_{{0}}^{1} G_{t'-s}^{2}(x,y)
dyds\Biggr)
\nonumber
\\
& \leqslant  c(N, T) \bigl\llvert t-t' \bigr\rrvert
^{1/2}.
\end{align}
According to (\ref{65}) and (\ref{65-ascoli-}), the sequence
$\{K_{n}, n\geqslant 1\}$ is a bounded and H\"{o}lder continuous family
in $C([0,T]; L^{2}([0,1]))$; hence it is a bounded equicontinuous family
and therefore by \textit{i)} of Lemma \ref{80} and the Arzel\`{a}--Ascoli theorem
we get (\ref{82}).
\end{proof}
\end{appendix}

\begin{acknowledgement}

The authors are very thankful to the Editor for her very constructive
criticism from which our final version of the article has benefited.
Many thanks also to the referees for their careful reading and useful
remarks. We are also very indebted to Professors R. Zhang and J. Xiong
for some kind discussions we had about the Burkholder--Davis--Gundy
inequality for SPDEs\index{SPDEs} driven by a space-time white noise.

%[title={Acknowledgments}]
\end{acknowledgement}

%\begin{funding}
%\gsponsor[id=,sponsor-id=]{}
%\gnumber[refid=]{}
%\end{funding}

%\end{thebibliography}
\end{document}